\documentclass[11pt]{amsart}
\usepackage[english]{babel}
\usepackage[dvips]{graphicx}
\usepackage{amsmath,amsfonts,amsthm,amssymb,bbm,comment}
\usepackage[usenames, dvipsnames]{color}
\usepackage{hyperref}
 \newcommand{\ind}{\mathbbm{1}}

\bibliographystyle{plain}
\usepackage[width=6.5in,height=21.5cm]{geometry}

\newtheorem{theo}{Theorem}[section]

\newtheorem{ass}{Assumption}[section]
\newtheorem{lem}{Lemma}[section]
\newtheorem{cor}{Corollary}[section]
\newtheorem{rem}{Remark}[section]
\newtheorem{example}{Example}[section]

\title[Empirical means of interacting RSPs]{\bf Networks of reinforced stochastic processes:\\ asymptotics for the empirical means}

\author[G. Aletti]{Giacomo Aletti}
\address{ADAMSS Center,
  Universit\`a degli Studi di Milano, Milan, Italy}
\email{giacomo.aletti@unimi.it}

\author[I. Crimaldi]{Irene Crimaldi}
\address{IMT School for Advanced Studies, Lucca, Italy}
\email{irene.crimaldi@imtlucca.it}

\author[A. Ghiglietti]{Andrea Ghiglietti}
\address{Universit\`a degli Studi di Milano, Milan, Italy}
\email{andrea.ghiglietti@unimi.it (Corresponding author)}

\date{\today}

\begin{document}

\maketitle

\begin{abstract}
This work deals with systems of {\em interacting reinforced stochastic
  processes}, where each process $X^j=(X_{n,j})_n$ is located at a
vertex $j$ of a finite {\em weighted direct graph}, and it can be
interpreted as the sequence of ``actions'' adopted by an agent $j$ of
the network.  The interaction among the evolving dynamics of these
processes depends on the weighted adjacency matrix $W$ associated to
the underlying graph: indeed, the probability that an agent $j$
chooses a certain action depends on its personal ``inclination''
$Z_{n,j}$ and on the inclinations $Z_{n,h}$, with $h\neq j$, of the
other agents according to the elements of $W$.\\ \indent Asymptotic
results for the stochastic processes of the personal inclinations
$Z^j=(Z_{n,j})_n$ have been subject of studies in recent papers
(e.g.~\cite{ale-cri-ghi,cri-dai-lou-min});
%
%
while the asymptotic behavior of quantities based on the stochastic
processes $X^j$ of the actions has never been studied yet. In this
paper, we fill this gap by characterizing the asymptotic behavior of
the {\em empirical means} $N_{n,j}=\sum_{k=1}^n X_{k,j}/n$, proving
their almost sure synchronization and some central limit theorems in
the sense of stable convergence.  Moreover, we discuss some
statistical applications of these convergence results concerning
confidence intervals for the random limit toward which all the
processes of the system converge and tools to make inference on the
matrix $W$.
\end{abstract}

\paragraph{Keywords:}
\textit{Interacting Systems; Reinforced Stochastic Processes; Urn
  Models; Complex Networks; Synchronization; Asymptotic
  Normality}.
\\

\smallskip
\noindent {\em 2010 AMS classification:} 60F05, 60F15, 60K35;
62P35, 91D30.

\section{Framework, model and main ideas}
Real-world systems often consist of interacting agents
that may develop a collective behavior (e.g. \cite{alb-bar, bar-alb, new, hof}):
in neuroscience the brain is an active network
where billions of neurons interact in various ways in the cellular
circuits; many studies in biology focus on the interactions between
different sub-systems; social sciences and economics deal with
individuals that take decisions under the influence of other
individuals, and also in engineering and computer science ``consensus
problems'', understood as the ability of interacting dynamic agents to
reach a common asymptotic stable state, play a crucial role.
In all these frameworks, an usual phenomenon is the {\em synchronization},
that could be roughly defined as the tendency of different interacting
agents to adopt a common behavior. Taking into account various
features of these systems, several research works employed agent-based
models in order to analyze how macro-level collective behaviors arise
as products of the micro-level processes of interaction among the
agents of the system (we refer to \cite{are} for a detailed and well
structured survey on this topic, rich of examples and references). The
main goals of these researches are twofold: (i) to understand
whether and when a (complete or partial) synchronization in a
dynamical system of interacting agents can emerge and (ii) to
analyze the interplay between the network topology of the
interactions among the agents and the dynamics followed by the agents.
\\

\indent This work is placed in the stream of scientific literature
that studies systems of {\em interacting urn models} (e.g.~\cite{ale-ghi,
  ben, che-luc, cir, cri-dai-min, dai-lou-min, lau2, lau1, lima,
  mar-val, pag-sec, sah}) and their variants and generalizations
(e.g.~\cite{ale-cri-ghi, cri-dai-lou-min}). Specifically, our work deals with
the class of the so-called {\em interacting reinforced stochastic processes}
considered in~\cite{ale-cri-ghi, cri-dai-lou-min}.
Generally speaking, by reinforcement in a
stochastic dynamics we mean any mechanism for which the probability
that a given event occurs has an increasing dependence on the number
of times that events of the same type occurred in the past. This {\em
  ``self-reinforcing property''}, also known as {\em ``preferential
  attachment rule''}, is a key feature governing the dynamics of many
biological, economic and social systems (see, e.g. \cite{pem}).  The best
known example of reinforced stochastic process is the standard
P\`{o}lya's urn \cite{egg-pol,mah}, which has been widely studied and
generalized (some recent variants can be found in \cite{ale-ghi-pag,
  ale-ghi-vid, aoudia-perron, ber-cri-pra-rig-barriere,
  cal-che-cri-pam, chen-kuba, collevecchio, cri-ipergeom, ghi-pag14,
  ghi-vid-ros, laru-page}).  \\

\indent We consider a system of $N\geq1$ interacting reinforced
stochastic processes $\{X^j=(X_{n,j})_{n\geq 1}:\, 1 \leq j\leq N\}$
positioned at the vertices of a {\em weighted directed graph} $G=(V,\,
E,\, W)$, where $V:=\{1,...,N\}$ denotes the set of vertices,
$E\underline{\subset} V\times V$ the set of edges and
$W=[w_{h,j}]_{h,j\in V\times V}$ the weighted adjacency matrix with
$w_{h,j}\geq 0$ for each pair of vertices.  The presence of the edge
$(h,j)\in E$ indicates a ``direct influence'' that the vertex $h$ has
on the vertex $j$ and it corresponds to a strictly positive element
$w_{h,j}$ of $W$ that represents a weight quantifying this influence.
We assume the weights to be normalized so that $\sum_{h=1}^N
w_{h,j}=1$ for each $j\in V$.  For any $n\geq 1$, we assume the random
variables $\{X_{n,j}:\,j\in V\}$ to take values in $\{0,1\}$ and hence
they can be interpreted as ``two-modality actions'' that the agents of
the network can adopt at time $n$. Formally, the interaction between
the processes $\{X^j:\,j\in V\}$ is modeled as follows: for any $n\geq
0$, the random variables $\{X_{n+1,j}:\,j\in V\}$ are conditionally
independent given ${\mathcal F}_{n}$ with
\begin{equation}\label{interacting-1-intro}
P(X_{n+1,j}=1\, |\, {\mathcal F}_n)=\sum_{h=1}^N w_{h,j} Z_{n,h},
\end{equation}
and, for each $h\in V$,
\begin{equation}\label{interacting-2-intro}
Z_{n,h}=(1-r_{n-1})Z_{n-1,h}+r_{n-1}X_{n,h},
\end{equation}
where $Z_{0,h}$ are random variables with values in $[0,1]$,
${\mathcal F}_n:=\sigma(Z_{0,h}: h\in V)\vee \sigma(X_{k,j}:\, 1\leq
k\leq n,\,j\in V )$ and $0\leq r_n<1$ are real numbers such that
\begin{equation}\label{ass-r-intro}
\lim_n n^{\gamma} r_n=c>0\qquad\hbox{with } 1/2<\gamma\leq 1.
\end{equation}
(We refer to \cite{cri-dai-lou-min} for a discussion on the case
$0<\gamma \leq 1/2$, for which we have a different asymptotic
behavior of the model that is out of the scope of this research
work.) For example, if at each vertex $j\in V$ we have a standard
P\'olya's urn, with initial composition given by the pair $(a,b)$,
then we have $r_n=(a+b+n+1)^{-1}$ and so $\gamma=c=1$.  Each random
variable $Z_{n,h}$ takes values in $[0,1]$ and it can be interpreted
as the ``personal inclination'' of the agent $h$ of adopting ``action
1'', so that the probability that the agent $j$ adopts ``action 1'' at
time $(n+1)$ depends on its personal inclination $Z_{n,j}$ and on the
inclinations $Z_{n,h}$, with $h\neq j$, of the other agents at time
$n$ according to the ``influence-weights'' $w_{h,j}$.\\

\indent The previous quoted papers \cite{ale-cri-ghi, cri-dai-lou-min,
  cri-dai-min, dai-lou-min} are all focused on the asymptotic
behavior of the stochastic processes of the ``personal inclinations''
$\{Z^j=(Z_{n,j})_n:\, j\in V \}$ of the agents. On the contrary, in this
work we focus on the average of times in which the agents adopt
``action 1'', i.e. we study the stochastic processes of the {\em
  empirical means} $\{N^j=(N_{n,j})_{n}:\, j\in V\}$ defined,
for each $j\in V$, as $N^j_0:=0$ and, for any $n\geq 1$,
\begin{equation}\label{medie-empiriche-intro}
N_{n,j}:=\frac{1}{n}\sum_{k=1}^{n} X_{k,j}\,.
\end{equation}
Since $(1/n)\sum_{k=1}^{n-1}X_{k,j}=(1-1/n)N_{n-1,j}$, the dynamics of
each process $N^j$ can be written as follows:
\begin{equation}\label{interacting-N}
N_{n,j}=\left(1-\frac{1}{n}\right)N_{n-1,j}+\frac{1}{n}X_{n,j}.
\end{equation}
Furthermore, the above dynamics \eqref{interacting-1-intro},
\eqref{interacting-2-intro} and \eqref{interacting-N} can be expressed
in a compact form, using the random vectors
$\mathbf{X}_{n}:=(X_{n,1},\dots,X_{n,N})^{\top}$ for $n\geq 1$,
$\mathbf{N}_{n}:=(N_{n,1},\dots,N_{n,N})^{\top}$ and
$\mathbf{Z}_{n}:=(Z_{n,1},\dots,Z_{n,N})^{\top}$ for $n\geq 0$, as:
\begin{equation}\label{eq:dynamic-0}
E[\mathbf{X}_{n+1}|\mathcal{F}_{n}]=W^{\top}\,\mathbf{Z}_{n}\,,
\end{equation}
where $W^{\top}\mathbf{1}=\mathbf{1}$ by the normalization of the
weights, and
\begin{equation}\label{eq:dynamic}
\left\{\begin{aligned}
&\mathbf{Z}_{n}\ =\
\left(1-r_{n-1}\right)\mathbf{Z}_{n-1}\ +\ r_{n-1}\mathbf{X}_{n},\\
&\mathbf{N}_{n}\ =\
\left(1-\frac{1}{n}\right)\mathbf{N}_{n-1}\ +\ \frac{1}{n}\mathbf{X}_{n}.
\end{aligned}\right.
\end{equation}

\indent In the framework described above, under
  suitable assumptions, we prove that all the stochastic processes
  $N^j=(N_{n,j})_n$, with $j\in V$, converge almost surely to the same
  limit random variable (in other words, we prove their almost sure
  synchronization), which is also the common limit random variable of
  the stochastic processes $Z^j=(Z_{n,j})_n$, say $Z_{\infty}$ (see
  Theorem~\ref{th:sincro}).  From an applicative point of view, the
  almost sure synchronization of the stochastic processes $N^j$ means
  that, with probability $1$, the percentages of times that the agents
  of the system adopt the ``action 1'' tend to the same random value
  $Z_{\infty}$. Moreover, we provide some Central Limit Theorems
  (CLTs) in the sense of stable convergence, in which the asymptotic
  variances and covariances are expressed as functions of the
  eigen-structure of the weighted adjacency matrix $W$ and of the
  parameters $\gamma,\, c$ governing the asymptotic behavior of the
  sequence $(r_n)_n$ (see Theorem~\ref{thm:asymptotics_theta_gamma},
  Theorem~\ref{thm:N_1_gamma_1}, Theorem~\ref{thm:asymptotics_theta_1}
  and Theorem~\ref{thm:asymptotics_Z_1_star}). These convergence
  results are also discussed from the point of view of the
  statistical applications. In particular, they lead to the
  construction of asymptotic confidence intervals for the common limit
  random variable $Z_{\infty}$ based on the random variables $X_{n,j}$
  through the empirical means \eqref{medie-empiriche-intro}, that
  specifically require neither the knowledge of the initial random
  variables $\{Z_{0,j}:\, j\in V\}$ nor of the exact expression of
  the sequence $(r_n)_n$. For the case $\gamma=1$, that for instance
  includes the case of interacting standard P\'olya's urns, we also
  provide a statistical test, based on the random variables $X_{n,j}$
  through the empirical means \eqref{medie-empiriche-intro}, to make
  inference on the weighted adjacency matrix $W$ of the network. The
  fact that the confidence intervals and the inferential procedures
  presented in this work are based on $X_{n,j}$, instead of $Z_{n,j}$
  as done in~\cite{ale-cri-ghi}, represents a great improvement in any
  area of application, since the ``actions'' $X_{n,j}$ adopted by the
  agents of the network are much more likely to be observed than their
  ``personal inclinations'' $Z_{n,j}$ of adopting these actions.\\

\indent The proofs of the given CLTs are a substantial part of
this work and we believe that it is worth spending some words on the
main tools employed and technical issues faced.  The essential idea is
to decompose the stochastic process $(\mathbf{N}_{n})_n$ into the sum
of two terms, where the first one converges, at the rate
$n^{\gamma-1/2}$ for each $1/2<\gamma\leq 1$, stably in the strong
sense with respect to the filtration $(\mathcal{F}_n)_n$ toward a
certain Gaussian kernel, and the second term is an
$(\mathcal{F}_n)$-adapted stochastic process that converges stably to
a suitable Gaussian kernel, with the corresponding rate and argument
required for the proof different according to the value of $\gamma$.
Indeed, when $1/2<\gamma<1$, the second term converges stably at the
same rate as above, i.e.~$n^{\gamma-1/2}$, and in the proof we have a
certain remainder term that tends to zero in probability (see
Theorem~\ref{thm:asymptotics_theta_hat_gamma}). On the contrary, when
$\gamma=1$ and $N\geq 2$ (the case $\gamma=1, \, N=1$ is similar to
the previous case $1/2<\gamma<1$), we do not have the convergence to
zero of that remainder term (see
Remark~\ref{rem:different_approach_gamma_1}) and so we develop a
coupling technique based on the pair of random vectors
$(\mathbf{Z}_n,\mathbf{N}_n)$. So doing, we determine two different
rates for the convergence of the second term, depending on the second
highest real part ${\mathcal Re}(\lambda^*)$ of the eigenvalues of $W$
(see Theorem~\ref{thm:asymptotics_theta_hat_1} where the rate is
$\sqrt{n}$ and Theorem~\ref{thm:asymptotics_theta_1_star} where the
rate is $\sqrt{n/\ln(n)}$). The contributions of the two terms are in
particular reflected in the analytic expressions of the asymptotic
covariance matrix of $\mathbf{N}_{n}$ (see
Theorem~\ref{thm:asymptotics_theta_gamma},
Theorem~\ref{thm:asymptotics_theta_1} and
Theorem~\ref{thm:asymptotics_Z_1_star}), where there is a component
$\widetilde{\Sigma}_{\gamma}$ due to the first term (which is zero
when the rate for the second term is $\sqrt{n/\ln(n)}$, because the
contribution of the first term vanishes) and another component
due to the second term that is different in the various cases:
$\widehat{\Gamma}_{\gamma}$ when $1/2<\gamma<1$, and
$\widehat{\Sigma}_{\mathbf{N}\mathbf{N}}$ or
$\widehat{\Sigma}^{*}_{\mathbf{N}\mathbf{N}}$, according to the value
of ${\mathcal Re}(\lambda^*)$, when $\gamma=1$.\\

\indent Summing up, the main focus here concerns the asymptotic
behavior of the empirical means $({\mathbf N}_n)_n$, that has not been
subject of study yet. Furthermore, although we recover some results on
$(\mathbf{Z}_n)_n$ proved in \cite{ale-cri-ghi}, we point out that the
existence of joint central limit theorems for the pair
$(\mathbf{Z}_n,\mathbf{N}_n)$ is not obvious because the ``discount
factors'' in the dynamics of the increments
$(\mathbf{Z}_{n}-\mathbf{Z}_{n-1})_n$ and
$(\mathbf{N}_{n}-\mathbf{N}_{n-1})_n$ are generally different. Indeed,
as shown in~\eqref{eq:dynamic}, these two stochastic processes follow
the dynamics
\begin{equation}\label{eq-increments-intro}
\left\{
\begin{aligned}
&\mathbf{Z}_{n}-\mathbf{Z}_{n-1}\ =\
r_{n-1}\left(\mathbf{X}_{n}-\mathbf{Z}_{n-1}\right),\\
&\mathbf{N}_{n}-\mathbf{N}_{n-1}\ =\
\frac{1}{n}\left(\mathbf{X}_{n}-\mathbf{N}_{n-1}\right),
\end{aligned}
\right.
\end{equation}
and so, when we assume $1/2<\gamma<1$, it could be surprising that
there exists a common convergence rate. In addition, we will show
that, when $1/2<\gamma<1$, the stochastic processes $N^j=(N_{n,j})_n$
located at different vertices of the graph synchronize among each
other faster than how they converge to the common random limit
$Z_{\infty}$, i.e.  for any pair of vertices $(j,h)$ with $j\neq h$,
the velocity at which $(N_{n,j}-N_{n,h})_n$ converges almost surely to
zero is higher than the one at which $N^j=(N_{n,j})_n$ and
$N^h=(N_{h,n})_n$ converge almost surely to $Z_{\infty}$.  At the
contrary, when $\gamma=1$ the stochastic processes $N^j=(N_{n,j})_n$
synchronize and converge almost surely to $Z_{\infty}$ at the same
velocity.  The same asymptotic behaviors characterize the stochastic
processes $Z^j=(Z_{n,j})_n$, as proved also in~\cite{ale-cri-ghi,
  cri-dai-lou-min}.  However, while it is somehow guessable from
\eqref{eq-increments-intro} that the velocities of synchronization and
convergence for the processes $Z^j=(Z_{n,j})_n$ depend on the
parameter $\gamma$, it could be somehow unexpected that, although the
discount factor of the increments $(\mathbf{N}_{n}-\mathbf{N}_{n-1})$
is always $n^{-1}$, the corresponding velocities for the processes
$N^j=(N_{n,j})_n$ also depend on $\gamma$ and, in general, also these
processes do not synchronize and converge to $Z_{\infty}$ at the same
velocity.  As we will see, this fact is essentially due to their
dependence on the process $(\mathbf{Z}_{n})_n$, which is induced by
the process $(\mathbf{X}_{n})_n$. It is worthwhile to note that
dynamics similar to~\eqref{eq-increments-intro} have already been
considered in the Stochastic Approximation literature.  Specifically,
in~\cite{mok-pel} the authors established a CLT for a pair of
recursive procedures having two different step-sizes. However, this
result does not apply to our situation.  Indeed, the covariance
matrices $\Sigma_\mu$ and $\Sigma_\theta$ in their main result
(Theorem~1) are deterministic, while the asymptotic covariance
matrices in our CLTs are random (as said before, they depend on the
random variable $Z_\infty$).  This is why we do not use the simple
convergence in distribution, but we employ the notion of stable
convergence, which is, among other things, essential for the
considered statistical applications. Finally, in~\cite{mok-pel}, the
authors find two different convergence rates, depending on the two
different step-sizes, while, as already said, we find a common
convergence rate.\\

\indent The rest of the paper is organized as follows. In Section
\ref{section_model} we describe the notation and the assumptions used
along the paper.  In Section \ref{section_asymptotic_results} we
illustrate our main results and we discuss some possible statistical
applications. An interesting example of interacting system is also
provided in order to clarify the statement of the theorems and the
related comments. Section \ref{section_proofs} contains the proofs or
the main steps of the proofs (postponing some technical lemmas to
Appendix \ref{app-A}) of the presented results. For the reader's
convenience, Appendix \ref{app-B} supplies a brief review on the
notion of stable convergence and its variants.

\section{Notation and assumptions}\label{section_model}

Throughout all the paper, we will adopt the same notation used in
\cite{ale-cri-ghi}.  In particular, we denote by ${\mathcal Re}(z)$,
${\mathcal Im}(z)$, $\overline{z}$ and $|z|$ the real part, the
imaginary part, the conjugate and the modulus of a complex number $z$.
Then, for a matrix $A$ with complex elements, we let $\overline{A}$
and $A^{\top}$ be its conjugate and its transpose, while we indicate
by $|A|$ the sum of the modulus of its elements. The identity matrix
is denoted by $I$, independently of its dimension that will be clear
from the context.  The spectrum of $A$, i.e.~the set of all the
eigenvalues of $A$ repeated with their multiplicity, is denoted by
$Sp(A)$, while its sub-set containing the eigenvalues with maximum
real part is denoted by $\lambda_{\max}(A)$, i.e.  $\lambda^*\in
\lambda_{\max}(A)$ whenever ${\mathcal Re}(\lambda^*)=\max\{ {\mathcal
  Re}(\lambda):\, \lambda\in Sp(A) \}$.  Finally, we consider any
vector $\mathbf{v}$ as a matrix with only one column (so that all the
above notations apply to $\mathbf{v}$) and we indicate by
$\|\mathbf{v}\|$ its norm, i.e. $ \|\mathbf{v} \|^2 =
\overline{\mathbf{v}}^{\top}\mathbf{v}$.  The vectors whose elements
are all ones or zeros are denoted by $\mathbf{1}$ and $\mathbf{0}$,
respectively, independently of their dimension that will be clear from
the context.\\

\indent Throughout all the paper, we assume that the following
conditions hold:

\begin{ass}\label{ass:r_n}
There exist real constants $c>0$ and $1/2<\gamma\leq 1$ such that
condition \eqref{ass-r-intro} is satisfied, which can be rewritten as
\begin{equation}\label{ass:condition_r_n_gamma}
n^{\gamma} r_n\ =\ c\ +\ o(1).
\end{equation}
In some results for $\gamma=1$, we will require a slightly stricter
condition than~\eqref{ass:condition_r_n_gamma}, that is:
\begin{equation}\label{ass:condition_r_n_1}
nr_n\ =\ c\ +\ O\left(n^{-1}\right).
\end{equation}
We will explicitly mention this assumption in the statement of the
theorems when it is required.
\end{ass}

\begin{ass}
The weighted adjacency matrix $W$ is irreducible and diagonalizable.
\end{ass}

The irreducibility of $W$ reflects a situation in which all the
vertices are connected among each others and hence there are no
sub-systems with independent dynamics (see \cite{ale-cri-ghi, ale-ghi}
for further details). The diagonalizability of $W$ allows us to find a
non-singular matrix $\widetilde{U}$ such that
$\widetilde{U}^{\top}W(\widetilde{U}^{\top})^{-1}$ is diagonal with
complex elements $\lambda_j\in Sp(W)$.  Notice that each column
$\mathbf{u}_j$ of $\widetilde{U}$ is a left eigenvector of $W$
associated to some eigenvalue $\lambda_j$.  Without loss of
generality, we set $\|\mathbf{u}_j\|=1$.  Moreover, when the
multiplicity of some $\lambda_j$ is bigger than one, we set the
corresponding eigenvectors to be orthogonal.  Then, if we define
$\widetilde{V}=(\widetilde{U}^{\top})^{-1}$, we have that each column
$\mathbf{v}_j$ of $\widetilde{V}$ is a right eigenvector of $W$
associated to $\lambda_j$ such that
\begin{equation}\label{eq:relazioni-0}
\mathbf{u}_j^{\top}\,\mathbf{v}_j=1,\quad\mbox{ and }\qquad
\mathbf{u}_h^{\top}\,\mathbf{v}_j=0,\ \forall h\neq j.
\end{equation}

These constraints combined with the above assumptions on $W$
(precisely, $w_{h,j}\geq 0$, $W^{\top}\mathbf{1}=\mathbf{1}$ and
the irreducibility) imply, by
Frobenius-Perron Theorem, that $\lambda_1:=1$ is an eigenvalue of $W$
with multiplicity one, $\lambda_{\max}(W)=\{1\}$ and
\begin{equation}\label{eq:relazioni-1}
\mathbf{u}_1=N^{-1/2}\mathbf{1}, \qquad
N^{-1/2}{\mathbf 1}^{\top}{\mathbf v}_1=1\qquad
\mbox{and}\qquad
v_{1,j}:=[\mathbf{v}_1]_j>0\;\ \forall 1\leq j\leq N.
\end{equation}

We use $U$ and $V$ to indicate the sub-matrices of
$\widetilde{U}$ and $\widetilde{V}$, respectively, whose columns are
the left and the right eigenvectors of $W$ associated to
$Sp(W)\setminus \{1\}$, that is $\{\mathbf{u}_2,\dots,\mathbf{u}_N\}$
and $\{\mathbf{v}_2,\dots,\mathbf{v}_N\}$, respectively, and, finally, we denote
by $\lambda^*$ an eigenvalue belonging to $Sp(W)\setminus\{1\}$ such that
$$
{\mathcal Re}(\lambda^*)=\max\left\{ {\mathcal Re}(\lambda_j):\,
\lambda_j\in Sp(W)\setminus\{1\}\right\}.
$$
In other words, if we denote by $D$ the diagonal matrix whose
elements are $\lambda_j\in Sp(W)\setminus\{1\}$, we have
$\lambda^*\in \lambda_{\max}(D)$.

\section{Main results and discussion}
\label{section_asymptotic_results}

In this section, we present and discuss our main results concerning
the asymptotic behavior of the joint process
$(\mathbf{Z}_{n},\mathbf{N}_{n})_n$.  We recall the assumptions stated
in Section~\ref{section_model} and we refer to Appendix \ref{app-B}
for a brief review on the notion of stable convergence and its
variants.  \\

\indent We start by providing a first-order asymptotic result
concerning the almost sure convergence of the sequence of pairs
$(\mathbf{Z}_{n},\mathbf{N}_{n})_n$.
\begin{theo}\label{th:sincro}
For $N\geq 1$, we have
\begin{equation*}
\mathbf{N}_n\ \stackrel{a.s.}{\longrightarrow}\ Z_{\infty}\mathbf{1}\,,
\end{equation*}
where $Z_{\infty}$ is the random variable with values in $[0,1]$
defined as the common almost sure limit of the stochastic processes
$Z^j=(Z_{n,j})_n$. \\
\indent Moreover, the following statements hold true:
\begin{itemize}
\item[(i)] $P(Z_\infty=z)=0$ for any $z\in (0,1)$.
\item[(ii)] If we have
  $P(\bigcap_{j=1}^N\{Z_{0,j}=0\})+P(\bigcap_{j=1}^N\{Z_{0,j}=1\})<1$,
  then $P(0<Z_\infty<1)>0$.
\end{itemize}
\end{theo}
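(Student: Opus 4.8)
The plan is to split the statement into three parts: (a) the almost sure convergence $\mathbf{N}_n \to Z_\infty \mathbf{1}$, (b) item (i), the absence of atoms of $Z_\infty$ in the open interval $(0,1)$, and (c) item (ii), the positivity of $P(0<Z_\infty<1)$ under the stated non-degeneracy of the initial inclinations. For part (a), I would first invoke the known synchronization result for the inclinations from \cite{ale-cri-ghi, cri-dai-lou-min}: there is a random variable $Z_\infty$ with values in $[0,1]$ such that $\mathbf{Z}_n \to Z_\infty \mathbf{1}$ almost surely. Then, starting from \eqref{eq:dynamic-0}, I would write the conditional expectation of the increment of $\mathbf{N}_n$, namely $E[\mathbf{X}_{n+1}\mid \mathcal F_n] = W^\top \mathbf{Z}_n \to Z_\infty \mathbf{1}$ (using $W^\top\mathbf{1}=\mathbf{1}$). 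The recursion $\mathbf{N}_n = (1-1/n)\mathbf{N}_{n-1} + (1/n)\mathbf{X}_n$ is a Robbins–Monro–type averaging scheme with step $1/n$; the natural argument is to decompose $\mathbf{X}_n = E[\mathbf{X}_n\mid\mathcal F_{n-1}] + \Delta M_n$, where $\Delta M_n$ is a bounded martingale increment, and note $N_{n,j} = \frac1n\sum_{k=1}^n E[X_{k,j}\mid \mathcal F_{k-1}] + \frac1n\sum_{k=1}^n \Delta M_{k,j}$. The martingale term goes to zero a.s. by the strong law for martingales with bounded increments (or Kronecker's lemma applied to $\sum \Delta M_{k,j}/k$), and the first (Cesàro) term converges to $Z_\infty$ because its summands $[W^\top \mathbf{Z}_{k-1}]_j = \sum_h w_{h,j} Z_{k-1,h} \to Z_\infty$. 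Hence $N_{n,j}\to Z_\infty$ a.s. for every $j$.

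For part (i), I would again lean on the corresponding property already established for $Z_\infty$ in the literature on the inclinations (e.g.\ \cite{ale-cri-ghi}): $P(Z_\infty = z)=0$ for every $z\in(0,1)$. Since Theorem \ref{th:sincro} defines $Z_\infty$ as exactly that same random variable, item (i) is immediate — the only work is to cite it precisely, or, if one wants a self-contained argument, to reprove it via a martingale/exchangeability argument showing that, conditionally on $\{0<Z_\infty<1\}$, the limit has no atoms (a standard de Finetti / Rubin-type construction for reinforced processes: on that event the process behaves locally like a Pólya urn whose limit is absolutely continuous).

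For part (ii), the goal is to show that under $P(\cap_j\{Z_{0,j}=0\}) + P(\cap_j\{Z_{0,j}=1\}) < 1$ the limit is strictly interior with positive probability. The strategy is: first, since $\mathbf{Z}_n \to Z_\infty\mathbf{1}$ and by \eqref{interacting-2-intro} the vector $\mathbf{Z}_n$ is a convex combination of $\mathbf{Z}_0$ and the $\mathbf{X}_k$'s, $Z_\infty\in\{0\}$ forces every $X_{k,j}$ to be eventually $0$ and $Z_\infty\in\{1\}$ forces them eventually $1$; more usefully, one should identify a martingale. The natural candidate is $Q_n := \boldsymbol{\pi}^\top \mathbf{Z}_n$ for a suitable left Perron eigenvector normalization — or more simply, using \eqref{eq:relazioni-1}, the quantity $\widehat{Z}_n := \mathbf{v}_1^\top \cdot(\text{something})$; in the references it is shown that an appropriate linear functional of $\mathbf{Z}_n$ is an $(\mathcal F_n)$-martingale converging to $Z_\infty$, with $E[Z_\infty] = E[\text{that functional of }\mathbf{Z}_0]$. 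The hypothesis then guarantees that this mean lies strictly between $0$ and $1$, or at least that $Z_\infty$ is not a.s.\ in $\{0,1\}$: if $Z_\infty\in\{0,1\}$ a.s., then $E[Z_\infty]=P(Z_\infty=1)$, and combining with the martingale identity plus the fact that $\{Z_\infty=1\}\supseteq\{\text{all }Z_{0,j}=1\}$ only up to the dynamics would contradict the strict inequality — this comparison is where the hypothesis is used. I expect \textbf{this last step to be the main obstacle}: one has to rule out that $Z_\infty$ is supported on $\{0,1\}$ purely from a non-degenerate initial condition, which requires showing that the interaction-plus-reinforcement dynamics cannot deterministically collapse a genuinely mixed start to a pure state. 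The clean way is to exhibit, on an event of positive probability determined by $\mathbf{Z}_0$ (e.g.\ where not all $Z_{0,j}$ are equal to the same endpoint), a positive-probability ``trapping'' event on which the empirical frequencies are bounded away from $0$ and $1$ — for instance by a second-Borel–Cantelli / conditional-probability estimate showing $\sum_k P(X_{k,j}=1\mid\mathcal F_{k-1})$ and $\sum_k P(X_{k,j}=0\mid\mathcal F_{k-1})$ are both infinite on that event, which forces $0<\liminf N_{n,j}\le\limsup N_{n,j}<1$, hence $0<Z_\infty<1$, with positive probability.
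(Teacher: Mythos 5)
Your argument for the convergence $\mathbf{N}_n\stackrel{a.s.}\to Z_\infty\mathbf{1}$ is correct and is essentially the paper's proof: the paper also starts from $\mathbf{Z}_n\stackrel{a.s.}\to Z_\infty\mathbf{1}$ (from \cite{ale-cri-ghi}), notes via \eqref{eq:dynamic-0} and $W^{\top}\mathbf{1}=\mathbf{1}$ that $E[\mathbf{X}_n\,|\,\mathcal F_{n-1}]\to Z_\infty\mathbf{1}$ a.s., and then applies Lemma~\ref{lemma-serie-rv} (with $c_k=k$, $v_{n,k}=k/n$, $\eta=1$), whose internal mechanism is exactly your hand-written split into the Ces\`aro average of conditional expectations (Toeplitz) plus an $L^2$-bounded martingale handled by Kronecker's lemma. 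For items (i) and (ii) the paper does not give a new argument at all: both are statements about $Z_\infty$, the limit of the inclination processes, and are settled by citing Theorems 3.5 and 3.6 of \cite{ale-cri-ghi}; your treatment of (i) by citation matches this, and the self-contained argument you attempt for (ii), with its admitted unresolved ``main obstacle'', is unnecessary here --- the intended (and sufficient) route is the same citation, since nothing about $\mathbf{N}_n$ enters those two claims.
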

In particular, this result states that, when $N\geq 2$, all the
stochastic processes $N^j=(N_{n,j})_n$, located at the different
vertices $j\in V$ of the graph, synchronize almost surely, i.e. all of
them converge almost surely toward the same random variable
$Z_\infty$. Moreover, this random variable is the same limit toward
which all the stochastic processes $Z^j=(Z_{n,j})_n$ synchronize
almost surely (see Theorem 3.1 in~\cite{ale-cri-ghi}). In addition, it
is interesting to note that the synchronization holds true without any
assumption on the initial configuration $\mathbf{Z}_{0}$ and for any
choice of the weighted adjacency matrix $W$ with the required
assumptions. Finally, note that the synchronization is induced along
time independently of the fixed size $N$ of the network, and so it
does not require a large-scale limit (i.e. the limit for $N\to
+\infty$), which is usual in statistical mechanics for the study of
interacting particle systems.\\

\indent We now focus on the second-order asymptotic results.
Specifically, we present joint central limit theorems for the sequence
of pairs $(\mathbf{Z}_{n},\mathbf{N}_{n})_n$ in the sense of stable
convergence, that establish the rate of convergence to the limit
$Z_{\infty}\mathbf{1}$ given in Theorem~\ref{th:sincro} and the
relative asymptotic random covariance matrices.  First, we consider
the case $1/2<\gamma<1$:
\begin{theo}\label{thm:asymptotics_theta_gamma}
For $N\geq 1$ and $1/2<\gamma<1$, we have that
\begin{equation}\label{eq:CLT_theta_gamma}
n^{\gamma-\frac{1}{2}}
\begin{pmatrix}
\mathbf{Z}_n-Z_{\infty}\mathbf{1}\\
\mathbf{N}_n-Z_{\infty}\mathbf{1}
\end{pmatrix}
{\longrightarrow}\
\mathcal{N} \left(\ \mathbf{0}\ ,\ Z_{\infty}(1-Z_{\infty})
\begin{pmatrix}
\widetilde{\Sigma}_{\gamma} & \widetilde{\Sigma}_{\gamma}\\
\widetilde{\Sigma}_{\gamma} & \widetilde{\Sigma}_{\gamma}+\widehat{\Gamma}_{\gamma}
\end{pmatrix}\ \right)\ \ \ \ stably,
\end{equation}
where
\begin{equation}\label{def:Sigmatilde_gamma}
\widetilde{\Sigma}_{\gamma}:=
\widetilde{\sigma}_{\gamma}^2{\mathbf 1}{\mathbf 1}^{\top}
\qquad\mbox{ and }\qquad
\widetilde{\sigma}_{\gamma}^2:=
\frac{c^2\,\|\mathbf{v}_1\|^2}{N(2\gamma-1)}
>0,
\end{equation}
and
\begin{equation}\label{def:Sigmahat_NN_gamma}
\widehat{\Gamma}_{\gamma}:=
\widehat{\sigma}_{\gamma}^2{\mathbf 1}{\mathbf 1}^{\top}
\qquad\mbox{ and }\qquad
\widehat{\sigma}_{\gamma}^2:=
\frac{c^2\,\|\mathbf{v}_1\|^2}{N (3-2\gamma)}
>0.
\end{equation}
\end{theo}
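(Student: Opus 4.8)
The plan is to write
\[
\mathbf{N}_n-Z_{\infty}\mathbf{1}\ =\ \big(\mathbf{Z}_n-Z_{\infty}\mathbf{1}\big)\ +\ \big(\mathbf{N}_n-\mathbf{Z}_n\big),
\]
to analyse each summand separately --- the first being a ``strongly stably convergent'' term and the second an $(\mathcal{F}_n)$-adapted term, each with its own Gaussian kernel --- and then to recombine them using the stability of the convergence together with a continuous linear map. Throughout I use the martingale decomposition $\mathbf{X}_k=W^{\top}\mathbf{Z}_{k-1}+\Delta\mathbf{M}_k$, where $\Delta\mathbf{M}_k:=\mathbf{X}_k-W^{\top}\mathbf{Z}_{k-1}$ is an $(\mathcal{F}_n)$-martingale increment with conditionally independent components and $E[\Delta M_{k,h}^2\mid\mathcal{F}_{k-1}]=[W^{\top}\mathbf{Z}_{k-1}]_h(1-[W^{\top}\mathbf{Z}_{k-1}]_h)\to Z_{\infty}(1-Z_{\infty})$ a.s. The first step is a reduction to the one-dimensional consensus direction: using $W^{\top}\mathbf{1}=\mathbf{1}$ and $\mathbf{v}_1^{\top}W^{\top}=\mathbf{v}_1^{\top}$, the functional $\ell(\mathbf{w}):=\mathbf{v}_1^{\top}\mathbf{w}/\sqrt{N}$ satisfies $\ell(W^{\top}\mathbf{w})=\ell(\mathbf{w})$ and $\ell(\mathbf{1})=1$; since, by Frobenius--Perron, the eigenvalues of $W$ off the consensus line have real part $<1$, standard estimates (as in~\cite{ale-cri-ghi}) give $\mathbf{Z}_n-\ell(\mathbf{Z}_n)\mathbf{1}=O_P(n^{-\gamma/2})$, and a similar computation using $\mathbf{N}_n=\tfrac{1}{n}\sum_{k\leq n}\mathbf{X}_k$ and averaging gives $\mathbf{N}_n-\ell(\mathbf{N}_n)\mathbf{1}=O_P(n^{-\gamma/2})$; both are $o_P(n^{1/2-\gamma})$ because $\gamma<1$, so at the rate $n^{\gamma-1/2}$ it suffices to analyse the two scalars $\ell(\mathbf{Z}_n)$ and $\ell(\mathbf{N}_n)$ (times $\mathbf{1}$).

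I now identify the two terms. From \eqref{eq:dynamic}, $\ell(\mathbf{Z}_n)=\ell(\mathbf{Z}_0)+\sum_{k=1}^{n}r_{k-1}\ell(\Delta\mathbf{M}_k)$ is a bounded martingale, so it converges a.s., necessarily to $\ell(Z_{\infty}\mathbf{1})=Z_{\infty}$ by Theorem~\ref{th:sincro}; hence $\ell(\mathbf{Z}_n)-Z_{\infty}=-\sum_{j>n}r_{j-1}\ell(\Delta\mathbf{M}_j)$ is a martingale tail sum. Since $E[\ell(\Delta\mathbf{M}_j)^2\mid\mathcal{F}_{j-1}]\to\|\mathbf{v}_1\|^2 Z_{\infty}(1-Z_{\infty})/N$ and $\sum_{j>n}r_{j-1}^2\sim c^2 n^{1-2\gamma}/(2\gamma-1)$, a stable central limit theorem for martingale tail sums (see Appendix~\ref{app-B}) gives that $n^{\gamma-1/2}(\mathbf{Z}_n-Z_{\infty}\mathbf{1})$ converges stably \emph{in the strong sense} with respect to $(\mathcal{F}_n)_n$ to $\mathcal{N}(\mathbf{0},\,Z_{\infty}(1-Z_{\infty})\widetilde{\Sigma}_{\gamma})$, with $\widetilde{\sigma}_{\gamma}^2=c^2\|\mathbf{v}_1\|^2/(N(2\gamma-1))$; this recovers the central limit theorem for $(\mathbf{Z}_n)_n$ of~\cite{ale-cri-ghi}. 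For the second term, interchanging the order of summation in $\ell(\mathbf{N}_n)=\tfrac{1}{n}\sum_{k=1}^{n}\big(\ell(\mathbf{Z}_{k-1})+\ell(\Delta\mathbf{M}_k)\big)$ and inserting $\ell(\mathbf{Z}_{k-1})=\ell(\mathbf{Z}_n)-\sum_{j=k}^{n}r_{j-1}\ell(\Delta\mathbf{M}_j)$ yields the exact identity
\[
\ell(\mathbf{N}_n)-\ell(\mathbf{Z}_n)\ =\ \frac{1}{n}\sum_{k=1}^{n}\big(1-k\,r_{k-1}\big)\ell(\Delta\mathbf{M}_k),
\]
so $n^{\gamma-1/2}(\mathbf{N}_n-\mathbf{Z}_n)$ equals, up to $o_P(1)$, the $(\mathcal{F}_n)$-adapted rescaled martingale $n^{\gamma-3/2}\sum_{k=1}^{n}(1-k\,r_{k-1})\ell(\Delta\mathbf{M}_k)\,\mathbf{1}$. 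Because $k\,r_{k-1}=c\,k^{1-\gamma}(1+o(1))\to\infty$, one has $(1-k\,r_{k-1})^2=c^2 k^{2-2\gamma}(1+o(1))$, so the sum of conditional variances converges to $Z_{\infty}(1-Z_{\infty})\widehat{\sigma}_{\gamma}^2$ with $\widehat{\sigma}_{\gamma}^2=c^2\|\mathbf{v}_1\|^2/(N(3-2\gamma))$, while the increments are $O(n^{-1/2})$ so the Lindeberg condition holds trivially; the stable martingale CLT then gives $n^{\gamma-1/2}(\mathbf{N}_n-\mathbf{Z}_n)\to\mathcal{N}(\mathbf{0},\,Z_{\infty}(1-Z_{\infty})\widehat{\Gamma}_{\gamma})$ stably.

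It remains to recombine. Since the first term converges stably in the strong sense with respect to $(\mathcal{F}_n)_n$ and the second is $(\mathcal{F}_n)$-adapted and converges stably, the combination result for stable convergence reviewed in Appendix~\ref{app-B} yields joint stable convergence of $\big(n^{\gamma-1/2}(\mathbf{Z}_n-Z_{\infty}\mathbf{1}),\,n^{\gamma-1/2}(\mathbf{N}_n-\mathbf{Z}_n)\big)$ to a conditionally Gaussian kernel with block-diagonal covariance $Z_{\infty}(1-Z_{\infty})\begin{pmatrix}\widetilde{\Sigma}_{\gamma}&0\\0&\widehat{\Gamma}_{\gamma}\end{pmatrix}$ (the two marginal kernels being conditionally independent given $\sigma(Z_{\infty})$). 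Applying the continuous linear map $(\mathbf{a},\mathbf{b})\mapsto(\mathbf{a},\mathbf{a}+\mathbf{b})$, which preserves stable convergence, and using $\mathbf{N}_n-Z_{\infty}\mathbf{1}=(\mathbf{Z}_n-Z_{\infty}\mathbf{1})+(\mathbf{N}_n-\mathbf{Z}_n)$, the covariance of the transformed kernel is $Z_{\infty}(1-Z_{\infty})\begin{pmatrix}\widetilde{\Sigma}_{\gamma}&\widetilde{\Sigma}_{\gamma}\\\widetilde{\Sigma}_{\gamma}&\widetilde{\Sigma}_{\gamma}+\widehat{\Gamma}_{\gamma}\end{pmatrix}$, which is exactly \eqref{eq:CLT_theta_gamma}; the strict positivity of $\widetilde{\sigma}_{\gamma}^2$ and $\widehat{\sigma}_{\gamma}^2$ follows from $2\gamma-1>0$, $3-2\gamma>0$ and $\|\mathbf{v}_1\|^2>0$.

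The step I expect to be the main obstacle is the recombination: one must check that the first term converges \emph{in the strong sense}, not merely stably --- which is precisely what the tail-sum representation $\ell(\mathbf{Z}_n)-Z_{\infty}=-\sum_{j>n}r_{j-1}\ell(\Delta\mathbf{M}_j)$ provides, and what makes it compatible with the adapted second term --- and one must control several remainders, showing they vanish in probability after multiplication by $n^{\gamma-1/2}$: those coming from replacing $1-k\,r_{k-1}$ by $-c\,k^{1-\gamma}$ and $E[\ell(\Delta\mathbf{M}_k)^2\mid\mathcal{F}_{k-1}]$ by its limit in the conditional-variance asymptotics, and the off-consensus components $\mathbf{Z}_n-\ell(\mathbf{Z}_n)\mathbf{1}$ and $\mathbf{N}_n-\ell(\mathbf{N}_n)\mathbf{1}$. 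These last terms are $o_P(1)$ at the rate $n^{\gamma-1/2}$ only because $\gamma<1$; this is exactly the point that breaks down for $\gamma=1$ and forces the separate coupling argument mentioned in the introduction.
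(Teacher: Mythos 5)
Your proposal is correct and follows essentially the paper's own route: the split into the consensus martingale $\widetilde{Z}_n=\ell(\mathbf{Z}_n)$ (converging stably in the strong sense, Theorem~\ref{thm:asymptotics_Z_tilde}) plus an $(\mathcal{F}_n)$-adapted term handled by the triangular-array stable martingale CLT (Theorem~\ref{thm:triangular}) is the same, your exact identity $\ell(\mathbf{N}_n)-\ell(\mathbf{Z}_n)=\frac{1}{n}\sum_{k=1}^n(1-k\,r_{k-1})\ell(\Delta\mathbf{M}_k)$ is precisely the $\mathbf{v}_1$-projection of the paper's decomposition $n\widehat{\mathbf{N}}_n=W^{\top}\sum_{k}\widehat{\mathbf{Z}}_{k-1}+\sum_{k}\mathbf{T}_k$, your $O_P(n^{-\gamma/2})$ control of the off-consensus parts is the paper's $\mathbf{Q}_n\to 0$ estimate via $E[\|\widehat{\mathbf{Z}}_n\|^2]=O(n^{-\gamma})$, and the recombination uses the same product lemma (Theorem~\ref{blocco}) followed by the linear map $(\mathbf{a},\mathbf{b})\mapsto(\mathbf{a},\mathbf{a}+\mathbf{b})$. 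The only slip is attributional: the ``stable CLT for martingale tail sums'' you invoke is not in Appendix~\ref{app-B}; it is exactly Theorem~\ref{thm:asymptotics_Z_tilde}, quoted from~\cite{ale-cri-ghi}, which the paper likewise uses without reproving.
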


\begin{rem}
\rm Some considerations can be drawn by looking at the analytic
expressions of $\widetilde{\sigma}_{\gamma}^2$ and
$\widehat{\sigma}_{\gamma}^2$ in~\eqref{def:Sigmatilde_gamma}
and~\eqref{def:Sigmahat_NN_gamma}, respectively.  First, they are both
decreasing in $N$, so that the asymptotic variances are small when the
number of vertices in the graph is large.  Second, they are both
increasing in $c$ and decreasing in $\gamma$, which, recalling that
$\lim_n n^{\gamma }r_n=c$, means that the faster is the convergence to
zero of the sequence $(r_n)_n$, the lower are the values of the
asymptotic variances $\widetilde{\sigma}_{\gamma}^2$ and
$\widehat{\sigma}_{\gamma}^2$. Third, when $\gamma$ is close to $1/2$,
$\widetilde{\sigma}_{\gamma}^2$ becomes very large, while
$\widehat{\sigma}_{\gamma}^2$ remains bounded, and hence the processes
$(\mathbf{Z}_n-Z_{\infty}\mathbf{1})$ and
$(\mathbf{N}_n-Z_{\infty}\mathbf{1})$ become highly correlated.
Finally, since we have
$$
1\leq 1+\|\mathbf{v}_1-\mathbf{u}_1\|^2=\|\mathbf{v}_1\|^2\leq N,
$$
we can obtain the following lower and upper bounds for
$\widetilde{\sigma}_{\gamma}^2$ and $\widehat{\sigma}_{\gamma}^2$ (not
depending on $W$):
$$
\frac{c^2}{N(2\gamma-1)}\leq
\widetilde{\sigma}_{\gamma}^2\leq\frac{c^2}{(2\gamma-1)}
\quad\hbox{and}\quad
\frac{c^2}{N(3-2\gamma)}\leq\widehat{\sigma}_{\gamma}^2\leq
\frac{c^2}{(3-2\gamma)}.
$$ Notice that the lower bound is achieved when
$\mathbf{v}_1=\mathbf{u}_1=N^{-1/2}\mathbf{1}$, i.e. when $W$ is
doubly stochastic.
\end{rem}

\begin{rem} \rm
Note that from~\eqref{eq:CLT_theta_gamma} of
Theorem~\ref{thm:asymptotics_theta_gamma}, we get in particular that,
for any pair of vertices $(j,h)$ with $j\neq h$,
$n^{\gamma-\frac{1}{2}}(N_{n,j}-N_{n,h})$ converges to zero in
probability.  Indeed, denoting by $\mathbf{e}_j$ the vector such that
$e_{j,j}=1$ and $e_{j,i}=0$ for all $i\neq j$, we have
$\mathbf{1}^{\top}(\mathbf{e}_j-\mathbf{e}_h)=0$ and hence
$(\mathbf{e}_j-\mathbf{e}_h)^{\top}\widetilde{\Sigma}_{\gamma}
(\mathbf{e}_j-\mathbf{e}_h)=
(\mathbf{e}_j-\mathbf{e}_h)^{\top}\widehat{\Gamma}_{\gamma}
(\mathbf{e}_j-\mathbf{e}_h)=0$.  Therefore,
Theorem~\ref{thm:asymptotics_theta_gamma} implies that the velocity at
which the stochastic processes $N^j=(N_{n,j})_n$, located at different
vertices $j\in V$, synchronize among each other is higher
than the one at which each of them converges almost surely to the
common random limit $Z_{\infty}$. The same asymptotic behavior is
shown also by the stochastic processes $Z^j=(Z_{n,j})_n$ as shown
in \cite{ale-cri-ghi, cri-dai-lou-min}.
\end{rem}

For $\gamma=1$ we need to distinguish the case $N=1$ and the case
$N\geq 2$.  Indeed, in the second case we can have different
convergence rates according to the value of ${\mathcal Re}(\lambda^*)$.
More precisely, we have the following results:
\begin{theo}\label{thm:N_1_gamma_1}
For $N=1$ and $\gamma=1$, we have that
\begin{equation*}
\sqrt{n}
\begin{pmatrix}
Z_n-Z_{\infty}\\
N_n-Z_{\infty}
\end{pmatrix}
{\longrightarrow}\
\mathcal{N}
\left(
\ \mathbf{0}\ ,\ Z_{\infty}(1-Z_{\infty})
\begin{pmatrix}
c^2 & c^2\\
c^2 & c^2+(c-1)^2
\end{pmatrix}
\ \right)\ \ \ \ stably.
\end{equation*}
\end{theo}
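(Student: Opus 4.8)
The plan is to exploit the special structure of the case $N=1$. Here $W$ is the scalar $1$, so, conditionally on $\mathcal{F}_{n-1}$, the variable $X_n$ is Bernoulli with parameter $Z_{n-1}$ and, by~\eqref{eq:dynamic}, $(Z_n)_n$ is a bounded $(\mathcal{F}_n)$-martingale; in particular $Z_n\to Z_\infty$ a.s.\ and in $L^1$. Set $\Delta M_k:=X_k-Z_{k-1}$, a bounded martingale-difference sequence with $E[\Delta M_k^2\mid\mathcal{F}_{k-1}]=Z_{k-1}(1-Z_{k-1})$, and $M_n:=\sum_{k=1}^n\Delta M_k$.

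First I would record two elementary identities. Summing $Z_j-Z_{j-1}=r_{j-1}\Delta M_j$ over $j>n$ gives
\[
Z_n-Z_\infty=-\sum_{j>n}r_{j-1}\,\Delta M_j ;
\]
writing $X_k=Z_{k-1}+\Delta M_k$ in $N_n=\tfrac1n\sum_{k=1}^nX_k$, substituting $Z_{k-1}-Z_n=-\sum_{j=k}^n r_{j-1}\Delta M_j$, and exchanging the order of summation gives
\[
N_n-Z_n=\frac1n\sum_{j=1}^n\bigl(1-j\,r_{j-1}\bigr)\Delta M_j .
\]
Hence the joint vector splits as $\bigl(Z_n-Z_\infty,\,N_n-Z_\infty\bigr)^{\top}=(Z_n-Z_\infty)(1,1)^{\top}+(0,\,N_n-Z_n)^{\top}$: a ``tail'' term plus an $(\mathcal{F}_n)$-measurable term, which is the $N=1$, $\gamma=1$ instance of the decomposition outlined in the Introduction.

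Then I would treat the two summands by the stable central limit theorems for martingales recalled in Appendix~\ref{app-B}. For the tail term, $n r_n\to c$ yields $n\sum_{j>n}r_{j-1}^2\,Z_{j-1}(1-Z_{j-1})\to c^2Z_\infty(1-Z_\infty)$ (using $n\sum_{j>n}(j-1)^{-2}\to1$ and $Z_{j-1}\to Z_\infty$), while the conditional Lindeberg condition is immediate since $\sqrt n\,r_{j-1}=O(n^{-1/2})$ for $j>n$; thus $\sqrt n\,(Z_n-Z_\infty)$ converges to $\mathcal{N}\bigl(0,\,c^2Z_\infty(1-Z_\infty)\bigr)$ \emph{stably in the strong sense} with respect to $(\mathcal{F}_n)_n$. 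For the second term, $\widetilde M_n:=\sum_{j=1}^n(1-j\,r_{j-1})\Delta M_j$ is a martingale with uniformly bounded increments, and $j\,r_{j-1}\to c$ gives, by Ces\`aro, $\tfrac1n\sum_{j=1}^n(1-j\,r_{j-1})^2Z_{j-1}(1-Z_{j-1})\to(1-c)^2Z_\infty(1-Z_\infty)$; the stable martingale CLT then gives $\sqrt n\,(N_n-Z_n)=\widetilde M_n/\sqrt n\to\mathcal{N}\bigl(0,\,(1-c)^2Z_\infty(1-Z_\infty)\bigr)$ stably.

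Finally I would combine the two. Since $\sqrt n\,(N_n-Z_n)$ is $\mathcal{F}_n$-measurable and $\sqrt n\,(Z_n-Z_\infty)$ converges stably in the strong sense w.r.t.\ $(\mathcal{F}_n)_n$, the combination result for stable convergence (Appendix~\ref{app-B}) gives joint stable convergence of $\bigl(\sqrt n(Z_n-Z_\infty),\,\sqrt n(N_n-Z_n)\bigr)$ to a pair $(G,G')$ of centered Gaussian variables, \emph{conditionally independent} given $\mathcal{F}_\infty$, with conditional variances $c^2Z_\infty(1-Z_\infty)$ and $(1-c)^2Z_\infty(1-Z_\infty)$. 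Applying the continuous map $(a,b)\mapsto(a,a+b)$ and $(1-c)^2=(c-1)^2$ produces the asserted stable limit, with covariance $Z_\infty(1-Z_\infty)\left(\begin{smallmatrix}c^2&c^2\\ c^2&c^2+(c-1)^2\end{smallmatrix}\right)$. The main obstacle is the rigour of this last step rather than any single computation: one must check that the tail term converges stably \emph{in the strong sense} (not merely stably), which is exactly what forces the two limiting Gaussians to be conditionally independent — and that independence is what makes the off-diagonal entry equal to $c^2$; a secondary point is controlling, inside the Ces\`aro and tail sums, the error from $j\,r_{j-1}=c+o(1)$ (rather than $=c$), which for $\gamma=1$ is $o(1)$ and hence harmless, which is why this case is ``similar to'' the case $1/2<\gamma<1$.
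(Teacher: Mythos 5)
Your proposal is correct and follows essentially the same route as the paper: for $N=1$ one has $\widehat N_n=N_n-Z_n=\tfrac1n\sum_{j=1}^{n}(1-jr_{j-1})\Delta M_j$, handled by the martingale stable CLT (Theorem~\ref{thm:triangular}) with limit variance $(c-1)^2Z_\infty(1-Z_\infty)$, while $\sqrt n\,(Z_n-Z_\infty)$ converges stably in the strong sense with variance $c^2Z_\infty(1-Z_\infty)$, and the two pieces are combined via Theorem~\ref{blocco} and the map $(x,y)\mapsto(y,x+y)$, exactly as in the paper. The only caveat concerns your justification of the tail term: Appendix~\ref{app-B} contains no strong-sense CLT, so that step should be supported by Theorem~\ref{thm:asymptotics_Z_tilde} (i.e.\ the result quoted from \cite{ale-cri-ghi}), as the paper does; your direct tail-sum computation yields the correct asymptotic variance, but by itself (via the triangular-array theorem) it would only give plain stable convergence, whereas the conditional independence you rightly identify as essential requires the strong-sense (almost sure conditional) convergence of that term.
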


\begin{theo}\label{thm:asymptotics_theta_1}
For $N\geq 2$, $\gamma=1$ and ${\mathcal
  Re}(\lambda^{*})<1-(2c)^{-1}$, under
condition~\eqref{ass:condition_r_n_1}, we have that
\begin{equation}\label{eq:CLT_theta_1}
\sqrt{n}
\begin{pmatrix}
\mathbf{Z}_n-Z_{\infty}\mathbf{1}\\
\mathbf{N}_n-Z_{\infty}\mathbf{1}
\end{pmatrix}
{\longrightarrow}\
\mathcal{N} \left(\ \mathbf{0}\ ,\ Z_{\infty}(1-Z_{\infty})
\begin{pmatrix}
\widetilde{\Sigma}_1+\widehat{\Sigma}_{\mathbf{ZZ}}
& \widetilde{\Sigma}_1+\widehat{\Sigma}_{\mathbf{ZN}}\\
\widetilde{\Sigma}_1+\widehat{\Sigma}_{\mathbf{ZN}}^{\top}
& \widetilde{\Sigma}_1+\widehat{\Sigma}_{\mathbf{NN}}
\end{pmatrix}\ \right)\ \ \ \ stably,
\end{equation}
where $\widetilde{\Sigma}_1$ is defined as
in~\eqref{def:Sigmatilde_gamma} with $\gamma=1$, and
\begin{equation}\label{def:Sigmahat_ZZ_1}
\widehat{\Sigma}_{\mathbf{ZZ}}:=U\widehat{S}_{\mathbf{ZZ}}U^{\top},\qquad\mbox{with}
\end{equation}
\begin{equation}\label{def:Shat_ZZ}
[\widehat{S}_{\mathbf{ZZ}}]_{h,j}\ :=\
\frac{c^2}{c(2-\lambda_h-\lambda_j)-1}(\mathbf{v}_{h}^{\top}\mathbf{v}_{j}),\
2\leq h,j\leq N;
\end{equation}
\begin{equation}\label{def:Sigmahat_NN_1}
\widehat{\Sigma}_{\mathbf{NN}}:=
\widetilde{U}\widehat{S}_{\mathbf{NN}}\widetilde{U}^{\top},
\qquad\mbox{with}
\end{equation}
\begin{equation}\label{def:Shat_NN_1}
[\widehat{S}_{\mathbf{NN}}]_{1,1}:=(c-1)^2\|\mathbf{v}_{1}\|^2,\qquad
[\widehat{S}_{\mathbf{NN}}]_{1,j}=[\widehat{S}_{\mathbf{NN}}]_{j,1}:=
\left(\frac{1-c}{1-\lambda_j}\right)(\mathbf{v}_{1}^{\top}\mathbf{v}_{j}),\
2\leq j\leq N,
\end{equation}
\begin{equation}\label{def:Shat_NN_2}
[\widehat{S}_{\mathbf{NN}}]_{h,j}\ :=\
\frac{1+(c-1)[(1-\lambda_h)^{-1}+(1-\lambda_j)^{-1}]}{c(2-\lambda_h-\lambda_j)-1}
(\mathbf{v}_{h}^{\top}\mathbf{v}_{j}),\ 2\leq h,j\leq N;
\end{equation}
\begin{equation}\label{def:Sigmahat_ZN_1}
\widehat{\Sigma}_{\mathbf{ZN}}:=U\widehat{S}_{\mathbf{ZN}}\widetilde{U}^{\top},
\qquad\mbox{with}
\end{equation}
\begin{equation}\label{def:Shat_ZN_1}
[\widehat{S}_{\mathbf{ZN}}]_{h,1}:=\left(\frac{1-c}{1-\lambda_h}\right)
(\mathbf{v}_{h}^{\top}\mathbf{v}_{1}),\ 2\leq h\leq N,
\end{equation}
\begin{equation}\label{def:Shat_ZN_2}
[\widehat{S}_{\mathbf{ZN}}]_{h,j}\ :=\
\frac{c+(c-1)(1-\lambda_h)^{-1}}{c(2-\lambda_h-\lambda_j)- 1}
(\mathbf{v}_{h}^{\top}\mathbf{v}_{j}),\ 2\leq h,j\leq N.
\end{equation}
\end{theo}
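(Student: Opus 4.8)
The plan is to pass to the spectral basis of $W$, split the centered pair into a component along the Perron direction $\mathbf{1}$ and components along the contracting eigenspaces, and recognise each piece as a (partial- or tail-) sum of one fixed bounded martingale difference, to which stable–convergence martingale CLTs apply. I would start from $\Delta\mathbf{M}_k:=\mathbf{X}_k-W^{\top}\mathbf{Z}_{k-1}$, a bounded $(\mathcal{F}_k)$-martingale difference by \eqref{eq:dynamic-0}, and record the fact that drives every asymptotic variance: since the coordinates of $\mathbf{X}_k$ are conditionally independent Bernoulli with parameters $(W^{\top}\mathbf{Z}_{k-1})_j$, all tending to $Z_{\infty}$ by Theorem~\ref{th:sincro}, one has $E[\Delta\mathbf{M}_k\Delta\mathbf{M}_k^{\top}\mid\mathcal{F}_{k-1}]\to Z_{\infty}(1-Z_{\infty})I$ a.s. Projecting \eqref{eq:dynamic} onto the left/right eigenvectors, the scalar coordinates $z^{(j)}_n:=\mathbf{v}_j^{\top}\mathbf{Z}_n$, $\nu^{(j)}_n:=\mathbf{v}_j^{\top}\mathbf{N}_n$ obey $z^{(j)}_n=(1-r_{n-1}(1-\lambda_j))z^{(j)}_{n-1}+r_{n-1}\mathbf{v}_j^{\top}\Delta\mathbf{M}_n$ and $\nu^{(j)}_n=(1-\tfrac1n)\nu^{(j)}_{n-1}+\tfrac1n\lambda_j z^{(j)}_{n-1}+\tfrac1n\mathbf{v}_j^{\top}\Delta\mathbf{M}_n$ — the $\mathbf{N}$-recursion being fed by the $\mathbf{Z}$-coordinate, which is the structural reason a joint treatment is unavoidable. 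For $j=1$ ($\lambda_1=1$) the first recursion shows $\bar Z_n:=N^{-1/2}\mathbf{v}_1^{\top}\mathbf{Z}_n$ is a bounded martingale with limit $Z_{\infty}$, so $Z_{\infty}-\bar Z_n=\sum_{l>n}r_{l-1}N^{-1/2}\mathbf{v}_1^{\top}\Delta\mathbf{M}_l$ is an explicit tail sum.

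Next I would carry out the two–term decomposition announced in the Introduction. Unrolling the recursions writes $\mathbf{Z}_n-Z_{\infty}\mathbf{1}$ and $\mathbf{N}_n-Z_{\infty}\mathbf{1}=\tfrac1n\sum_{k\le n}(\mathbf{X}_k-Z_{\infty}\mathbf{1})$ as an $\mathcal{F}_n$-measurable combination $\sum_{l\le n}A^{\bullet}_{n,l}\Delta\mathbf{M}_l$ of the increments over $l\le n$ (the ``adapted part'', whose stable limit at rate $\sqrt n$ I would first establish as a separate lemma — the role of Theorem~\ref{thm:asymptotics_theta_hat_1} in the paper's roadmap), plus the single tail term $(Z_{\infty}-\bar Z_n)\mathbf{1}=\sum_{l>n}(\cdots)\Delta\mathbf{M}_l$ shared by both blocks (the ``strongly stably convergent part'', producing $\widetilde{\Sigma}_1=\widetilde{\sigma}_1^2\mathbf{1}\mathbf{1}^{\top}$). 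The deterministic weights $A^{\bullet}_{n,l}$ are geometric-type products $\prod_{l<m\le n}(1-r_{m-1}(1-\lambda_j))$ and, for the $\mathbf{N}$-part, their Cesàro averages; up to the rescaling $m\,r_{m-1}\to c$ they decay like $(l/n)^{c(1-\lambda_j)}$. The decisive use of the hypothesis $\mathcal{Re}(\lambda^{*})<1-(2c)^{-1}$ is that it forces $\mathcal{Re}\big(c(1-\lambda_j)\big)>\tfrac12$ for every $j\ge2$, so that $n\sum_{l\le n}\|A^{\bullet}_{n,l}\|^2$ converges; hence the adapted part is $O_P(n^{-1/2})$ and admits a CLT at the same rate $\sqrt n$ as the tail part (were $\mathcal{Re}(\lambda^{*})$ equal to $1-(2c)^{-1}$ one would instead get the $\sqrt{n/\ln n}$ scaling of the companion theorem, with vanishing tail contribution).

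The limit law then follows from two martingale CLTs in the sense of stable convergence (Appendix~\ref{app-B}). For the adapted part I would view $\sqrt n\,A^{\bullet}_{n,l}\Delta\mathbf{M}_l$, $l\le n$, as a triangular array of martingale differences: the Lindeberg condition is immediate from $\|\Delta\mathbf{M}_l\|\le\sqrt N$ and the $O(n^{-1/2})$ size of the weights, while the predictable quadratic covariation $n\sum_{l\le n}A^{\bullet}_{n,l}\,E[\Delta\mathbf{M}_l\Delta\mathbf{M}_l^{\top}\mid\mathcal{F}_{l-1}]\,(A^{\bullet}_{n,l})^{\top}$ converges a.s., by the conditional–covariance fact above, to $Z_{\infty}(1-Z_{\infty})$ times a deterministic, eigenvalue-dependent matrix — yielding stable convergence to a Gaussian kernel. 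For the tail part, $\sqrt n(Z_{\infty}-\bar Z_n)$ is a rescaled tail sum of the square-integrable martingale $\bar Z_n$, which converges \emph{strongly} stably to $\mathcal{N}\big(0,Z_{\infty}(1-Z_{\infty})c^2\|\mathbf{v}_1\|^2/N\big)$; since it lives on indices $l>n$ while the adapted part lives on $l\le n$, the two are asymptotically conditionally uncorrelated, and the tools for combining strong stable convergence with stable convergence give the joint Gaussian kernel for the stacked vector, with covariance $Z_{\infty}(1-Z_{\infty})$ times the announced block matrix.

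Finally I would identify the entries, i.e.\ evaluate $\lim_n n\sum_{l}A^{\bullet}_{n,l}(A^{\bullet}_{n,l})^{\top}$ coordinate by coordinate. In the eigenbasis a pair $(h,j)$ contributes, after $l\,r_{l-1}\to c$, sums like $c^2\sum_{l\le n} l^{-2}(l/n)^{c(1-\lambda_h)+c(1-\lambda_j)}$ (with mixed variants for the $\mathbf{N}$-part), whose $n$-rescaled limit is $c^2/\big(c(2-\lambda_h-\lambda_j)-1\big)$ times $\mathbf{v}_h^{\top}\mathbf{v}_j$ — exactly the shape of \eqref{def:Shat_ZZ}, \eqref{def:Shat_NN_2}, \eqref{def:Shat_ZN_2}; the terms pairing the $j=1$ coordinate (step-size $\sim c/n$) with a $j\ge2$ coordinate of $\mathbf{N}$ (step-size $1/n$) produce the factors $(1-c)/(1-\lambda_j)$ of \eqref{def:Shat_NN_1}, \eqref{def:Shat_ZN_1}, and the pure $j=1$ coordinate of $\mathbf{N}$ gives $[\widehat{S}_{\mathbf{NN}}]_{1,1}=(c-1)^2\|\mathbf{v}_1\|^2$, consistently with the scalar case of Theorem~\ref{thm:N_1_gamma_1}. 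I expect the main obstacle to be exactly this step: justifying the interchange of the (tail/partial) sums with the limit and controlling the error when $r_{l-1}$ is replaced by $c/l$ inside these double sums — which is where the strengthened hypothesis \eqref{ass:condition_r_n_1}, $nr_n=c+O(n^{-1})$, is needed, a plain $o(1)$ leaving an uncontrolled $O(\log n)$ remainder in the cross contributions — together with the fact that the $\mathbf{N}$-drift is fed by $\mathbf{Z}_{n-1}$, which precludes a self-contained recursion for $\mathbf{N}$ and forces the coupled treatment of the pair $(\mathbf{Z}_n,\mathbf{N}_n)$.
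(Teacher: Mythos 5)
Your plan follows essentially the same route as the paper: you split off the Perron component $\widetilde Z_n$ (whose rescaled fluctuation converges stably in the strong sense) from the adapted components $(\widehat{\mathbf Z}_n,\widehat{\mathbf N}_n)$, treat the latter jointly in the eigenbasis by unrolling the coupled linear recursions into weighted sums of the increments $\Delta\mathbf M_k$, apply a triangular-array martingale stable CLT, combine the two pieces with the stable/strong-stable product result, and identify the covariance entries from the same weighted-sum limits $n\sum_k r_k^2 (k/n)^{c(2-\lambda_h-\lambda_j)}\to c^2/(c(2-\lambda_h-\lambda_j)-1)$ — this is exactly the paper's chain Theorem~\ref{thm:asymptotics_Z_tilde}, Theorem~\ref{thm:asymptotics_theta_hat_1}, Theorem~\ref{thm:triangular} and Theorem~\ref{blocco}. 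The only divergence is bookkeeping: you keep the $1/n$ step and work with Ces\`aro-averaged geometric weights, whereas the paper rewrites $1/n$ as $r_{n-1}c^{-1}$ plus the remainder $\mathbf R_n$ of \eqref{def:R_n} and unrolls a single $r_n$-stepped recursion, which is precisely where \eqref{ass:condition_r_n_1} is used, as you correctly anticipate.
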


The condition ${\mathcal Re}(\lambda^{*})<1-(2c)^{-1}$ in the above
Theorem~\ref{thm:asymptotics_theta_1} is the analogous of the one
typically required for the CLTs in the Stochastic Approximation
framework (e.g. \cite{konda, mok-pel, pel}). However, we deal with a
random limit $Z_\infty$ and random asymptotic covariances and our
proofs are not based on that results, but we employ different
arguments. Moreover, in the next theorem, we analyze also the case
${\mathcal Re}(\lambda^{*})=1-(2c)^{-1}$.

\begin{theo}\label{thm:asymptotics_Z_1_star}
For $N\geq 2$, $\gamma=1$ and ${\mathcal
  Re}(\lambda^{*})=1-(2c)^{-1}$, under
condition~\eqref{ass:condition_r_n_1}, we have that
\begin{equation}\label{eq:CLT_theta_1_star}
\sqrt{\frac{n}{\ln(n)}}
\begin{pmatrix}
\mathbf{Z}_n-Z_{\infty}\mathbf{1}\\
\mathbf{N}_n-Z_{\infty}\mathbf{1}
\end{pmatrix}
{\longrightarrow}\
\mathcal{N} \left(\ \mathbf{0}\ ,\ Z_{\infty}(1-Z_{\infty})
\begin{pmatrix}
\widehat{\Sigma}^{*}_{\mathbf{ZZ}} & \widehat{\Sigma}^{*}_{\mathbf{ZN}}\\
\widehat{\Sigma}_{\mathbf{ZN}}^{*\top} & \widehat{\Sigma}^{*}_{\mathbf{NN}}
\end{pmatrix}\
\right)\ \ \ \ stably,
\end{equation}
where
\begin{equation}\label{def:Sigmahat_ZZ_1_star}
\widehat{\Sigma}^{*}_{\mathbf{ZZ}}:=
U\widehat{S}^{*}_{\mathbf{ZZ}}U^{\top},\qquad\mbox{with}
\end{equation}
\begin{equation}\label{def:Shat_ZZ_star}
[\widehat{S}^{*}_{\mathbf{ZZ}}]_{h,j}\ :=\
c^2(\mathbf{v}_{h}^{\top}\mathbf{v}_{j})\ind_{\{c(2-\lambda_h-\lambda_j)=1\}},\
2\leq h,j\leq N;
\end{equation}
\begin{equation}\label{def:Sigmahat_NN_1_star}
\widehat{\Sigma}^{*}_{\mathbf{NN}}:=
U\widehat{S}^{*}_{\mathbf{NN}}U^{\top},\qquad\mbox{with}
\end{equation}
\begin{equation}\label{def:Shat_NN_star}
[\widehat{S}^{*}_{\mathbf{NN}}]_{h,j}\ :=\
\frac{\lambda_h\lambda_j}{(1-\lambda_h)(1-\lambda_j)}
(\mathbf{v}_{h}^{\top}\mathbf{v}_{j})\ind_{\{c(2-\lambda_h-\lambda_j)=1\}},\
2\leq h,j\leq N;
\end{equation}
\begin{equation}\label{def:Sigmahat_ZN_1_star}
\widehat{\Sigma}^{*}_{\mathbf{ZN}}:=
U\widehat{S}^{*}_{\mathbf{ZN}}U^{\top},\qquad\mbox{with}
\end{equation}
\begin{equation}\label{def:Shat_ZN_star}
[\widehat{S}^{*}_{\mathbf{ZN}}]_{h,j}\ :=\
\frac{c\lambda_j}{1-\lambda_h}(\mathbf{v}_{h}^{\top}\mathbf{v}_{j})
\ind_{\{c(2-\lambda_h-\lambda_j)=1\}},\ 2\leq h,j\leq N.
\end{equation}
\end{theo}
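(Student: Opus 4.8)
The plan is to work in the eigenbasis of $W$ and to isolate, inside both $\mathbf{Z}_n-Z_\infty\mathbf{1}$ and $\mathbf{N}_n-Z_\infty\mathbf{1}$, the contributions of the \emph{critical} eigenvalues, i.e.\ the $\lambda_j\in Sp(W)\setminus\{1\}$ with ${\mathcal Re}(\lambda_j)={\mathcal Re}(\lambda^*)=1-(2c)^{-1}$. Setting $\Delta\mathbf{M}_k:=\mathbf{X}_k-W^{\top}\mathbf{Z}_{k-1}$ (an $(\mathcal{F}_n)$-martingale increment by~\eqref{eq:dynamic-0}) and using $W^{\top}\mathbf{1}=\mathbf{1}$, from~\eqref{eq:dynamic} one obtains $\mathbf{N}_n-Z_\infty\mathbf{1}=W^{\top}\big(\tfrac1n\sum_{k=0}^{n-1}\mathbf{Z}_k-Z_\infty\mathbf{1}\big)+\tfrac1n\sum_{k=1}^{n}\Delta\mathbf{M}_k$; projecting on the right eigenvectors $\mathbf{v}_j$ (so that $W\mathbf{v}_j=\lambda_j\mathbf{v}_j$, $\mathbf{v}_j^{\top}\mathbf{1}=0$ for $j\ge 2$ by~\eqref{eq:relazioni-0}--\eqref{eq:relazioni-1}, and $\sum_j\mathbf{u}_j\mathbf{v}_j^{\top}=I$), the $\mathbf{u}_j$-component of $\mathbf{N}_n-Z_\infty\mathbf{1}$ equals $\lambda_j$ times the Ces\`aro average of $(\mathbf{v}_j^{\top}\mathbf{Z}_k)_k$ plus a normalized martingale term. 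The martingale $\tfrac1n\sum_k\Delta\mathbf{M}_k$, the $\mathbf{u}_1$-direction (which carries the common limit $Z_\infty\mathbf{1}$) and the components along the subcritical eigenvalues ${\mathcal Re}(\lambda_j)<1-(2c)^{-1}$ all fluctuate at the \emph{faster} rate $\sqrt n$, hence are $o_P(\sqrt{\ln n/n})$; this is why the summand $\widetilde\Sigma_1$ of Theorem~\ref{thm:asymptotics_theta_1} drops out here and why in~\eqref{def:Sigmahat_ZZ_1_star}--\eqref{def:Shat_ZN_star} the matrices are conjugated by $U$ only and carry the indicator $\ind_{\{c(2-\lambda_h-\lambda_j)=1\}}$.

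So fix a critical index $j$, put $\zeta_{n,j}:=\mathbf{v}_j^{\top}\mathbf{Z}_n$ and $\alpha_j:=c(1-\lambda_j)$, whence ${\mathcal Re}(\alpha_j)=\tfrac12$. From~\eqref{eq:dynamic} together with assumption~\eqref{ass:condition_r_n_1}, the sequence $(\zeta_{n,j})_n$ solves a linear stochastic recursion with gain $r_{n-1}\sim c/n$ and contraction factor $1-r_{n-1}(1-\lambda_j)\sim 1-\alpha_j/n$, so that $\zeta_{n,j}=c\,n^{-\alpha_j}\sum_{m=1}^{n}m^{\alpha_j-1}\,\mathbf{v}_j^{\top}\Delta\mathbf{M}_m+o_P(\sqrt{\ln n/n})$; since $2{\mathcal Re}(\alpha_j)-2=-1$, the predictable quadratic variation of this martingale grows like $\sum_{m\le n}m^{-1}\sim\ln n$, which is the source of the rate $\sqrt{n/\ln(n)}$. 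A Ces\`aro summation then yields $\mathbf{v}_j^{\top}\big(\tfrac1n\sum_{k=0}^{n-1}\mathbf{Z}_k\big)=\tfrac1{1-\alpha_j}\,\zeta_{n,j}+o_P(\sqrt{\ln n/n})$, so the $\mathbf{u}_j$-component of $\mathbf{N}_n-Z_\infty\mathbf{1}$ is $\tfrac{\lambda_j}{1-\alpha_j}\,\zeta_{n,j}+o_P(\sqrt{\ln n/n})$. This deterministic proportionality between the dominant parts of the $\mathbf{N}$- and $\mathbf{Z}$-projections is the coupling yielding the \emph{joint} statement; the corresponding description of $\mathbf{Z}_n-Z_\infty\mathbf{1}$ reproduces the result already obtained in~\cite{ale-cri-ghi} for this regime.

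To identify the limit covariance I would compute the predictable covariations of these projections. Because the entries of $\mathbf{X}_m$ are conditionally independent $\mathrm{Bernoulli}\big((W^{\top}\mathbf{Z}_{m-1})_i\big)$ and $\mathbf{Z}_{m-1}\to Z_\infty\mathbf{1}$ by Theorem~\ref{th:sincro}, one has $E[\Delta\mathbf{M}_m\Delta\mathbf{M}_m^{\top}\mid\mathcal{F}_{m-1}]\to Z_\infty(1-Z_\infty)I$, hence $E[(\mathbf{v}_h^{\top}\Delta\mathbf{M}_m)(\mathbf{v}_j^{\top}\Delta\mathbf{M}_m)\mid\mathcal{F}_{m-1}]\to Z_\infty(1-Z_\infty)\,\mathbf{v}_h^{\top}\mathbf{v}_j$. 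Substituting, the rescaled $(h,j)$ covariation of the $\mathbf{Z}$-projections is $\sim Z_\infty(1-Z_\infty)\,\mathbf{v}_h^{\top}\mathbf{v}_j\,c^2\,\tfrac{n}{\ln n}\,n^{-(\alpha_h+\alpha_j)}\sum_{m\le n}m^{\alpha_h+\alpha_j-2}$; the oscillatory sum stays bounded unless ${\mathcal Im}(\alpha_h+\alpha_j)=0$, and in that case it is $\sim\ln n$ and moreover $\alpha_h+\alpha_j=1$, i.e.\ precisely $c(2-\lambda_h-\lambda_j)=1$ (equivalently $\lambda_h=\overline{\lambda_j}$). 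On that resonant set the key identity $1-\alpha_j=\alpha_h$ holds, and it turns $\tfrac{\lambda_h}{1-\alpha_h}\tfrac{\lambda_j}{1-\alpha_j}$ into $\tfrac{\lambda_h\lambda_j}{c^2(1-\lambda_h)(1-\lambda_j)}$ and $\tfrac{\lambda_j}{1-\alpha_j}$ into $\tfrac{c\lambda_j}{1-\lambda_h}$, so the covariations produce exactly $\widehat S^{*}_{\mathbf{ZZ}}$, $\widehat S^{*}_{\mathbf{NN}}$, $\widehat S^{*}_{\mathbf{ZN}}$ of~\eqref{def:Shat_ZZ_star}, \eqref{def:Shat_NN_star}, \eqref{def:Shat_ZN_star}. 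Finally, writing $\sqrt{n/\ln n}\,(\mathbf{Z}_n-Z_\infty\mathbf{1},\mathbf{N}_n-Z_\infty\mathbf{1})$ as a fixed linear image, up to $o_P(1)$, of the martingale array $\big(c\,n^{-\alpha_j}\sqrt{n/\ln n}\sum_{m\le n}m^{\alpha_j-1}\mathbf{v}_j^{\top}\Delta\mathbf{M}_m\big)_{j\ \mathrm{critical}}$, the conditional Lindeberg condition is immediate (the array's jumps are uniformly $O\big((\ln n)^{-1/2}\big)$), so the stable central limit theorem recalled in Appendix~\ref{app-B} applies, and transforming back through $\sum_j\mathbf{u}_j\mathbf{v}_j^{\top}=I$ gives~\eqref{eq:CLT_theta_1_star}.

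I expect the main obstacle to be the rigorous control of the error terms at the borderline rate $\sqrt{n/\ln(n)}$: unlike in the subcritical regime $1/2<\gamma<1$, crude remainder bounds are not affordable here, and one must (i) replace the exact products $\prod_m\big(1-r_{m-1}(1-\lambda_j)\big)$ by $(k/n)^{\alpha_j}$-type powers up to an $O(n^{-1})$ relative error, which is what assumption~\eqref{ass:condition_r_n_1} guarantees; (ii) show, jointly with the main term, that the martingale $\tfrac1n\sum_k\Delta\mathbf{M}_k$ together with the $\mathbf{u}_1$- and subcritical components are $o_P(\sqrt{\ln n/n})$; and (iii) estimate the non-resonant oscillatory sums $\sum_{m\le n}m^{\alpha_h+\alpha_j-2}$ so that, once divided by $\ln n$, their contribution to the covariation indeed vanishes. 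A further delicate point is that all these approximations must be carried out in the sense of stable convergence with respect to $(\mathcal{F}_n)_n$, so that the random factor $Z_\infty(1-Z_\infty)$ is correctly retained in the limiting Gaussian kernel, as required for the statistical applications discussed in this section.
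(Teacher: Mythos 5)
Your proposal is correct in substance and arrives at exactly the covariance structure \eqref{def:Shat_ZZ_star}--\eqref{def:Shat_ZN_star}, but it organizes the argument along a genuinely different route from the paper. The paper couples $\widehat{\mathbf{Z}}_n$ and $\widehat{\mathbf{N}}_n$ into one $2N$-dimensional linear recursion, represents its solution through the matrix products $A_{k+1,n}$ of Lemma~\ref{lem:terms_in_A_k_n}, controls a remainder $\boldsymbol{\rho}_n$ via condition \eqref{ass:condition_r_n_1}, and then applies the triangular-array stable MCLT (Theorem~\ref{thm:triangular}) with the borderline limits of Lemma~\ref{lemma-tecnico_2}; you instead keep the Ces\`aro representation of $\mathbf{N}_n$ --- precisely the route the paper abandons for $\gamma=1$ in Remark~\ref{rem:different_approach_gamma_1} because $\mathbf{Q}_n$ does not vanish --- but rather than discarding the Ces\`aro term you resolve it, along each critical eigendirection, as the deterministic multiple $\lambda_j/(1-\alpha_j)$ (your $\alpha_j=c(1-\lambda_j)$) of the corresponding $\mathbf{Z}$-projection, which reduces the joint CLT to the marginal one and makes the origin of the indicator $\ind_{\{c(2-\lambda_h-\lambda_j)=1\}}$ and of the disappearance of $\widetilde{\Sigma}_1$ very transparent; indeed your proportionality factor is exactly the coefficient of $F_{k+1,n}(\alpha_j)$ in the paper's block $A^{3}_{k+1,n}$ (with the paper's $\alpha_j=1-\lambda_j$), so the two computations coincide at the core, and your resonance identity $1-\alpha_j=\alpha_h$ correctly converts $c^2$ into the entries of $\widehat{S}^{*}_{\mathbf{NN}}$ and $\widehat{S}^{*}_{\mathbf{ZN}}$. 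What your route still owes, and what the paper's machinery is really for, is the quantitative part you only flag: the per-$k$ errors in $\zeta_{k,j}=c\,k^{-\alpha_j}\sum_{m\le k}m^{\alpha_j-1}\mathbf{v}_j^{\top}\Delta\mathbf{M}_m+o_P(\sqrt{\ln k/k})$ cannot simply be Ces\`aro-averaged (an in-probability bound for each $k$ does not sum), so you must work with the exact products and $L^1/L^2$ bounds (the analogues of Lemmas~\ref{lemma-tecnico_1}--\ref{lemma-tecnico_2}), and the convergence of the conditional covariances in condition (c2) of Theorem~\ref{thm:triangular}, with the random factors $E[\xi_{m,h}\xi_{m,j}\mid\mathcal{F}_{m-1}]$ and the oscillatory non-resonant weights, requires a Toeplitz-type lemma for random sequences with bounded-variation weights, which is exactly the role of Lemma~\ref{lemma-serie-rv} and Remark~\ref{rem:relations_diff_v}; once these are supplied, your argument closes, so the gain of your approach is conceptual and organizational rather than a reduction in the analysis actually needed.
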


\begin{rem}
 \rm The central limit theorem only for the stochastic process
 $(\mathbf{Z}_n)_n$ can be established in the case ${\mathcal
   Re}(\lambda^*)<1-(2c)^{-1}$ replacing
 condition~\eqref{ass:condition_r_n_1} with the more general
 assumption~\eqref{ass:condition_r_n_gamma} (see Theorem 3.2 in~\cite{ale-cri-ghi}).
 However, condition~\eqref{ass:condition_r_n_1} is essential in our
 proof of the central limit theorem for the joint stochastic process
 $(\mathbf{Z}_n, \mathbf{N}_n)_n$ as stated in
 Theorem~\ref{thm:asymptotics_theta_1}.
\end{rem}

\begin{rem}
\rm From Theorem~\ref{thm:asymptotics_theta_1} and
Theorem~\ref{thm:asymptotics_Z_1_star} we get that, when $N\geq 2$ and
$\gamma=1$, for any pair of vertices $(j,h)$ with $j\neq h$, the
difference $(N_{n,j}-N_{n,h})$ converges almost surely to zero with
the same velocity at which each process $N^j=(N_{n,j})$ converges
almost surely to $Z_{\infty}$.  (The same asymptotic behavior is shown
also by the stochastic processes $Z^j=(Z_{n,j})_n$ as provided in
\cite{ale-cri-ghi, cri-dai-lou-min}.)  Indeed, although
$\widetilde{\Sigma}_{1}(\mathbf{e}_j-\mathbf{e}_h)=\mathbf{0}$ and
$\mathbf{u}_1^{\top}(\mathbf{e}_j-\mathbf{e}_h)=0$, we have
$U^{\top}(\mathbf{e}_j-\mathbf{e}_h)\neq\mathbf{0}$ and hence,
setting $\mathbf{u}_{j,h}:=U^{\top}(\mathbf{e}_j-\mathbf{e}_h)$ and
$\widetilde{\mathbf{u}}_{j,h}:=
\widetilde{U}^{\top}(\mathbf{e}_j-\mathbf{e}_h)=(0,\mathbf{u}_{j,h})^{\top}$,
for ${\mathcal Re}(\lambda^{*})<1-(2c)^{-1}$ by~\eqref{eq:CLT_theta_1}
we have
\begin{equation*}
\sqrt{n}
\begin{pmatrix}
Z_{n,j}-Z_{n,h}\\
N_{n,j}-N_{n,h}
\end{pmatrix}
{\longrightarrow}\
\mathcal{N} \left(\ \mathbf{0}\ ,\ Z_{\infty}(1-Z_{\infty})
\begin{pmatrix}
\mathbf{u}^{\top}\widehat{S}_{\mathbf{ZZ}}\mathbf{u}_{j,h}
& \mathbf{u}_{j,h}^{\top}\widehat{S}_{\mathbf{ZN}}\widetilde{\mathbf{u}}_{j,h}\\
\widetilde{\mathbf{u}}_{j,h}^{\top}\widehat{S}_{\mathbf{ZN}}^{\top}\mathbf{u}_{j,h}
& \widetilde{\mathbf{u}}_{j,h}^{\top}\widehat{S}_{\mathbf{NN}}
\widetilde{\mathbf{u}}_{j,h}
\end{pmatrix}\ \right)\ \ \ \ \hbox{stably};
\end{equation*}
while for ${\mathcal Re}(\lambda^{*})=1-(2c)^{-1}$
by~\eqref{eq:CLT_theta_1_star} we have
\begin{equation*}
\sqrt{\frac{n}{\ln(n)}}
\begin{pmatrix}
Z_{n,j}-Z_{n,h}\\
N_{n,j}-N_{n,h}
\end{pmatrix}
{\longrightarrow}\
\mathcal{N} \left(\ \mathbf{0}\ ,\ Z_{\infty}(1-Z_{\infty})
\begin{pmatrix}
\mathbf{u}_{j,h}^{\top}\widehat{S}^{*}_{\mathbf{ZZ}}\mathbf{u}_{j,h}
& \mathbf{u}_{j,h}^{\top}\widehat{S}^{*}_{\mathbf{ZN}}\mathbf{u}_{j,h}\\
\mathbf{u}_{j,h}^{\top}\widehat{S}_{\mathbf{ZN}}^{*\top}\mathbf{u}_{j,h}
& \mathbf{u}_{j,h}^{\top}\widehat{S}^{*}_{\mathbf{NN}}\mathbf{u}_{j,h}
\end{pmatrix}\
\right)\ \ \ \ \hbox{stably}.
\end{equation*}
Notice that the only elements $[\widehat{S}_{\mathbf{NN}}]_{h,j}$ that
count in the above limit relations are those with $2\leq h,j\leq N$.
Then, from~\eqref{def:Sigmahat_NN_1} we can see that these elements
remain bounded for any value of $c$, while
from~\eqref{def:Sigmahat_ZZ_1} we can see that the elements of
$\widehat{S}_{\mathbf{ZZ}}$ are increasing in $c$. (The same
considerations can be made for the elements of the matrices
$\widehat{S}^{*}_{\mathbf{NN}}$ and $\widehat{S}^{*}_{\mathbf{ZZ}}$,
but in this case the value of $c$ is uniquely determined by
${\mathcal Re}(\lambda^*)$).  As a consequence, for large values of
$c$, the asymptotic variance of $(N_{n,j}-N_{n,h})$ becomes negligible
with respect to the one of $(Z_{n,j}-Z_{n,h})$.  Therefore, when
$N\geq 2$ and $\gamma=1$, the synchronization between the empirical
means $N^j=(N_{n,j})_n$, located at different vertices $j\in V$,
is more accurate than the synchronization between the
stochastic processes $Z^j=(Z_{n,j})_n$.
\end{rem}

An interesting example of interacting system is provided by the {\em
  ``mean-field interaction''}, already considered
in~\cite{ale-cri-ghi, cri-dai-lou-min, cri-dai-min,
  dai-lou-min}. Naturally, all the weighted adjacency matrices
introduced and analyzed in \cite{ale-cri-ghi} can be considered as
well.

\begin{example}\label{ex:mean_field}\rm
The mean-field interaction can be expressed in terms of a
particular weighted adjacency matrix $W$ as follows: for any $1\leq h,
j\leq N$ (here we consider only the true ``interacting case'', that is
$N\geq 2$)
\begin{equation}\label{def:W_crimaldi}
w_{h,j}\ =\ \frac{\alpha}{N}\ +\ \delta_{h,j}(1-\alpha)
\qquad\mbox{with } \alpha\in [0,1],
\end{equation}
where $\delta_{h,j}$ is equal to $1$ when $h=j$ and to $0$
otherwise. Note that $W$ in~\eqref{def:W_crimaldi} is irreducible for
$\alpha>0$ and so we are going to consider this case. Since $W$ is
doubly stochastic, we have ${\mathbf v}_1={\mathbf
  u}_1=N^{-1/2}\mathbf{1}$.  Thus, for $1/2<\gamma<1$, we have
$$
\widetilde{\sigma}_{\gamma}^2=\frac{c^2}{N(2\gamma-1)}
\quad \hbox{and} \quad
\widehat{\sigma}_{\gamma}^2=\frac{c^2}{N(3-2\gamma)}.
$$
Furthermore, we have $\lambda_j=1-\alpha$ for all
$\lambda_j\in Sp(W)\setminus\{1\}$ and, consequently, the conditions
${\mathcal Re}(\lambda^*)<1-(2c)^{-1}$ or ${\mathcal
  Re}(\lambda^*)=1-(2c)^{-1}$ required in the previous results when
$\gamma=1$ correspond to the conditions $2c\alpha>1$ or $2c\alpha=1$.
Finally, since $W$ is also symmetric, we have $U=V$ and so
$U^{\top}U=V^{\top}V=I$ and
$UU^{\top}=VV^{\top}=(I-N^{-1}\mathbf{1}\mathbf{1}^{\top})$. Therefore,
for the case $\gamma=1$ and $2c\alpha>1$, we obtain:
\begin{itemize}
\item[(i)] $\widehat{S}_{\mathbf{ZZ}}=\frac{c^2}{2c\alpha-1}I$;
\item[(ii)] $[\widehat{S}_{\mathbf{NN}}]_{1,1}=(c-1)^2$ and
  $[\widehat{S}_{\mathbf{NN}}]_{j,j}=\frac{1+2(c-1)\alpha^{-1}}{2c\alpha-1}$
  for $2\leq j\leq N$, while $[\widehat{S}_{\mathbf{NN}}]_{h,j}=0$ for any
  $h\neq j$, $1\leq h,j\leq N$;
\item[(iii)]
  $[\widehat{S}_{\mathbf{ZN}}]_{j,j}=\frac{c+(c-1)\alpha^{-1}}{2c\alpha-1}$
  for $2\leq j\leq N$, while $[\widehat{S}_{\mathbf{ZN}}]_{h,j}=0$ for any
  $h\neq j$, $2\leq h\leq N$ and $1\leq j\leq N$.
\end{itemize}
Finally, when $\gamma=1$ and $2c\alpha=1$, we get:
\begin{itemize}
\item[(i)] $\widehat{S}^{*}_{\mathbf{ZZ}}=c^2I$;
\item[(ii)] $\widehat{S}^{*}_{\mathbf{NN}}=\frac{(1-\alpha)^2}{\alpha^2} I$;
\item[(iii)] $\widehat{S}^{*}_{\mathbf{ZN}}=\frac{c(1-\alpha)}{\alpha} I$. \qed
\end{itemize}
\end{example}

\subsection{Some comments on statistical applications}
\label{section_statistics}

The first statistical tool that can be derived from the previous
convergence results is the construction of asymptotic confidence
intervals for the limit random variable $Z_{\infty}$.  This issue has
been already considered in~\cite{ale-cri-ghi}, where from the central
limit theorem for
$\widetilde{Z}_{n}:=N^{-1/2}\,\mathbf{v}_1^{\top}\,\mathbf{Z}_{n}$
(recalled here in the following
Theorem~\ref{thm:asymptotics_Z_tilde}), a confidence interval with
approximate level $(1-\theta)$ is obtained for any $1/2<\gamma\leq 1$
as:
\begin{equation}\label{eq:confidence_interval_Z_inf_based_Z}
CI_{1-\theta}(Z_{\infty})\ =\
\widetilde{Z}_{n}\ \pm\ \frac{z_{\theta}}{n^{\gamma-1/2}}
\sqrt{\widetilde{Z}_n(1-\widetilde{Z}_n)\widetilde{\sigma}_{\gamma}^2},
\end{equation}
where $\widetilde{\sigma}_{\gamma}^2$ is defined as
in~\eqref{def:Sigmatilde_gamma} (also for $\gamma=1$) and $z_\theta$
is such that ${\mathcal N}(0,1)(z_\theta,+\infty)=\theta/2$. We note
that the construction of the above interval requires to know the
following quantities:
\begin{itemize}
\item[(i)] $N$: the number of vertices in the network;
\item[(ii)] $\mathbf{v}_1$: the right eigenvector of $W$ associated to
  $\lambda_1=1$ (note that it is not required to know the whole weighted
  adjacency matrix $W$, e.g.  we have
  $\mathbf{v}_1=\mathbf{u}_1=N^{-1/2}\mathbf{1}$ for any doubly
  stochastic matrix);
\item[(iii)] $\gamma$ and $c$: the parameters that describe the
  first-order asymptotic approximation of the sequence $(r_n)_n$
(see Assumption \ref{ass:r_n}).
\end{itemize}
In addition, the asymptotic confidence interval
in~\eqref{eq:confidence_interval_Z_inf_based_Z} requires the
observation of $\widetilde{Z}_{n}$, and so of $Z_{n,j}$ for any $j\in V$.
However, this requirement may not be feasible in practical
applications since the initial random variables $Z_{0,j}$ and the
exact expression of the sequence $(r_n)_n$ are typically unknown.
For instance, if at each vertex $j\in V$ we have a standard P\`olya's
urn with initial composition given by the pair $(a,b)$, then we have
$Z_{0,j}=a/(a+b)$ and $r_n=(a+b+n+1)^{-1}$ and hence, when the initial
composition is unknown, we have neither $Z_{0,j}$ nor the exact
value of $r_n$, but we can get $\gamma=c=1$. To face this problem,
here we propose asymptotic confidence intervals for $Z_\infty$ that do
not require the observation of $Z_{n,j}$, but are based on the
empirical means $N_{n,j}=\sum_{k=1}^{n}X_{k,j}/n$, where the random
variables $X_{k,j}$ are typically observable. To this aim, we consider
the convergence results presented in
Section~\ref{section_asymptotic_results} on the asymptotic behavior of
$\mathbf{N}_n$.  \\

\indent We first focus on the case $1/2<\gamma<1$ and we construct an
asymptotic confidence interval for $Z_{\infty}$ based on the empirical
means $N_{n,j}$, with $j\in V$, and the quantities in
(i)-(ii)-(iii). Indeed, setting
$\widetilde{N}_n:=N^{-1/2}\mathbf{v}_1^{\top}\mathbf{N}_n$ and using
the relation
$\mathbf{v}_1^{\top}\mathbf{u}_1=N^{-1/2}\mathbf{v}_1^{\top}\mathbf{1}=1$
(see~\eqref{eq:relazioni-1}),
from Theorem~\ref{thm:asymptotics_theta_gamma} we obtain that
$$
n^{\gamma-1/2}(\widetilde{N}_n-Z_{\infty}){\longrightarrow}\
\mathcal{N}\left(\ 0\ ,\
Z_{\infty}(1-Z_{\infty})(\widetilde{\sigma}_{\gamma}^2+\widehat{\sigma}_{\gamma}^2)\
\right)\ \ \ \ \ \ \ \hbox{stably},
$$
where $\widetilde{\sigma}_{\gamma}^2$ and $\widehat{\sigma}_{\gamma}^2$
are defined in~\eqref{def:Sigmatilde_gamma}
and~\eqref{def:Sigmahat_NN_gamma}, respectively.
Then, for $1/2<\gamma<1$, we have the following confidence interval
with approximate level $(1-\theta)$:
\begin{equation*}
CI_{1-\theta}(Z_{\infty})\ =\
\widetilde{N}_n\ \pm\ \frac{z_{\theta}}{n^{\gamma-1/2}}
\sqrt{\widetilde{N}_n(1-\widetilde{N}_n)
(\widetilde{\sigma}_{\gamma}^2+\widehat{\sigma}_{\gamma}^2)}.
\end{equation*}

\indent Analogously, for $\gamma=1$ and $N=1$, from Theorem
\ref{thm:N_1_gamma_1} we get
\begin{equation*}
CI_{1-\theta}(Z_{\infty})\ =\
N_n\ \pm\ \frac{z_{\theta}}{\sqrt{n}}
\sqrt{N_n(1-N_n)
(c^2+(c-1)^2)}.
\end{equation*}

\indent When $\gamma=1$ and $N\geq 2$, we have to distinguish two
cases according to the value of ${\mathcal Re}(\lambda^*)$. Thus, in this
case, the construction of suitable asymptotic confidence intervals for
$Z_\infty$ requires also the knowledge of ${\mathcal Re}(\lambda^*)$.
Specifically, when ${\mathcal Re}(\lambda^*)<1-(2c)^{-1}$, from
Theorem~\ref{thm:asymptotics_theta_1}, using the relations
$\mathbf{v}_1^{\top}\mathbf{u}_1=1$ and
$\mathbf{v}_1^{\top}U=\mathbf{0}$ (see~\eqref{eq:relazioni-0}), we
obtain that
$$
\sqrt{n}(\widetilde{N}_n-Z_{\infty}){\longrightarrow}\
\mathcal{N}\left(\ 0\ ,\ Z_{\infty}(1-Z_{\infty})
(\widetilde{\sigma}_{1}^2+N^{-1}[\widehat{S}_{\mathbf{NN}}]_{1,1})\
\right)\ \ \ \ \ \ \ \hbox{stably},
$$
where $\widetilde{\sigma}_{1}^2=c^2\|\mathbf{v}_1\|^2/N$ and
$[\widehat{S}_{\mathbf{NN}}]_{1,1}=(c-1)^2\|\mathbf{v}_1\|^2$.  Hence,
in this case we find:
\begin{equation*}
CI_{1-\theta}(Z_{\infty})\ =\
\widetilde{N}_n\ \pm\ \frac{z_{\theta}}{\sqrt{n}}
\!\sqrt{\widetilde{N}_n(1-\widetilde{N}_n)
\!\left(\textstyle{\frac{(c^2+(c-1)^2)\|\mathbf{v}_1\|^2}{N}}\right)}.
\end{equation*}
Note that analogous asymptotic confidence intervals for $Z_{\infty}$
can be constructed replacing $\widetilde{N}_n$ by another real
stochastic processes $(\mathbf{a}^{\top}\mathbf{N}_n)_n$, where
$\mathbf{a}\in\mathbb{R}^N$ and
$\mathbf{a}^{\top}\mathbf{1}=1$.  \\ \indent Finally, when ${\mathcal
  Re}(\lambda^*)=1-(2c)^{-1}$, we can not use $\widetilde{N}_n$ since,
by Theorem \ref{thm:asymptotics_Z_1_star} and the fact that
$\mathbf{v}_1^{\top}U=\mathbf{0}$, we have
$\sqrt{n/\ln(n)}(\widetilde{N}_n-Z_{\infty})\to 0$ in probability.
Therefore, in this case we need to replace the vector $\mathbf{v}_1$
by another vector $\mathbf{a}\in\mathbb{R}^N$ with
$\mathbf{a}^{\top}\mathbf{1}=1$ and $\mathbf{a}^{\top}U\neq
\mathbf{0}$.

\begin{example}\rm
In the case of a system with $N\geq 2$ and mean-field interaction (see
Example~\ref{ex:mean_field}), we get the following asymptotic
confidence intervals for $Z_{\infty}$ with approximate level
$(1-\theta)$:
\begin{itemize}
\item[(i)] when $1/2<\gamma<1$, setting
  $\widetilde{N}_n=N^{-1}\mathbf{1}^{\top}\mathbf{N}_n$, we have
\begin{equation*}
CI_{1-\theta}(Z_{\infty})\ =\
\widetilde{N}_n\ \pm\ \frac{z_{\theta}}{n^{\gamma-1/2}}
\sqrt{\widetilde{N}_n(1-\widetilde{N}_n)\frac{2c^2}{N(2\gamma-1)(3-2\gamma)}};
\end{equation*}
\item[(ii)] when $\gamma=1$ and $2c\alpha>1$, setting
  $\widetilde{N}_n=N^{-1}\mathbf{1}^{\top}\mathbf{N}_n$, we have
\begin{equation*}
CI_{1-\theta}(Z_{\infty})\ =\
\widetilde{N}_n \pm\ \frac{z_{\theta}}{\sqrt{n}}
\sqrt{\widetilde{N}_n(1-\widetilde{N}_n)\frac{c^2+(c-1)^2}{N}};
\end{equation*}
\item[(iii)] when $\gamma=1$ and $2c\alpha=1$, setting
  $\widetilde{N}^a_n:=\mathbf{a}^{\top}\mathbf{N}_n$ with
  $\mathbf{a}^{\top}\mathbf{1}=1$ and
  $\mathbf{a}\neq N^{-1}\mathbf{1}$, we have
\begin{equation*}
CI_{1-\theta}(Z_{\infty})\ =\
\widetilde{N}^a_n \pm\ z_{\theta} \sqrt{ \frac{\ln(n)}{n} }
\sqrt{\widetilde{N}^a_n(1-\widetilde{N}^a_n)}
\frac{(1-\alpha)}{\alpha}\|\mathbf{a}-N^{-1}\mathbf{1}\|,
\end{equation*}
where the last term follows by recalling that
$UU^{\top}=I-N^{-1}\mathbf{1}\mathbf{1}^{\top}$ and
noticing that
$$
\mathbf{a}^{\top}UU^{\top}\mathbf{a}=
\mathbf{a}^{\top}(I-N^{-1}\mathbf{1}\mathbf{1}^{\top})\mathbf{a}
=\|\mathbf{a}\|^2-N^{-1}=\|\mathbf{a}-N^{-1}\mathbf{1}\|^2
$$
(where for the last two equalities we used that
$\mathbf{a}^{\top}\mathbf{1}=1$).
\end{itemize}
\qed
\end{example}

\indent Another possible statistical application of the convergence
results of Section~\ref{section_asymptotic_results} concerns the
inference on the weighted adjacency matrix $W$ based on the empirical
means $N_{n,j}$, with $j\in V$, instead of the random variables
$Z_{n,j}$ as done in~\cite{ale-cri-ghi}.  Let us assume $N\geq 2$ (the
proper ``interacting'' case).  We propose to construct testing
procedures based on the multi-dimensional real stochastic process
$(UV^{\top}\mathbf{N}_n)_n$. Indeed, we note that it converges to
$\mathbf{0}$ almost surely because
$\mathbf{N}_n\stackrel{a.s.}{\longrightarrow}Z_{\infty}\mathbf{1}$ and
$V^{\top}\mathbf{1}=\mathbf{0}$ (since~\eqref{eq:relazioni-0} and
\eqref{eq:relazioni-1}). Moreover, when $\gamma=1$ and ${\mathcal
  Re}(\lambda^*)<1-(2c)^{-1}$, from
Theorem~\ref{thm:asymptotics_theta_1} we get that
$$
\sqrt{n}UV^{\top}\mathbf{N}_n{\longrightarrow}\
\mathcal{N} \left(\ \mathbf{0}\ ,\
Z_{\infty}(1-Z_{\infty})U[\widehat{S}_{\mathbf{NN}}]_{(-1)}U^{\top}\ \right)
\ \ \ \ \ \ \
\hbox{stably},
$$
where $[\widehat{S}_{\mathbf{NN}}]_{(-1)}$ denotes the square sub-matrix
obtained from $\widehat{S}_{\mathbf{NN}}$ removing its first row and its
first column.\\
Analogously, when $\gamma=1$ and
${\mathcal Re}(\lambda^*)=1-(2c)^{-1}$, from
Theorem~\ref{thm:asymptotics_Z_1_star} we get that
$$
\sqrt{\frac{n}{\ln(n)}} UV^{\top}\mathbf{N}_n{\longrightarrow}\
\mathcal{N} \left(\ \mathbf{0}\ ,\
Z_{\infty}(1-Z_{\infty})\widehat{\Sigma}^{*}_{\mathbf{NN}}\ \right)
\ \ \ \ \ \ \
\hbox{stably}.
$$
Remember that the case $\gamma=1$ includes, for instance, systems
of interacting P\'olya's urns.

\begin{example}\rm
In the case of $N\geq 2$ and mean-field interaction (see
Example~\ref{ex:mean_field}), recalling that $U=V$,
$UU^{\top}=(I-N^{-1}\mathbf{1}\mathbf{1}^{\top})$,
$[\widehat{S}_{\mathbf{NN}}]_{(-1)}=\frac{1+2(c-1)\alpha^{-1}}{2c\alpha-1}I$
and
$\widehat{\Sigma}^{*}_{\mathbf{NN}}=\frac{(1-\alpha)^2}{\alpha^2}UU^{\top}$,
we obtain that:
\begin{itemize}
\item[(i)] when $\gamma=1$ and $2c\alpha>1$,
$$
\sqrt{n}(I-N^{-1}\mathbf{1}\mathbf{1}^{\top})\mathbf{N}_n{\longrightarrow}\
\mathcal{N} \left(\ \mathbf{0}\ ,\
Z_{\infty}(1-Z_{\infty})
\frac{1+2(c-1)\alpha^{-1}}{2c\alpha-1}
(I-N^{-1}\mathbf{1}\mathbf{1}^{\top})\
\right)\ \ \ \ \ \ \
\hbox{stably};
$$
\item[(ii)] when $\gamma=1$ and $2c\alpha=1$,
$$
\sqrt{\frac{n}{\ln(n)}}(I-N^{-1}\mathbf{1}\mathbf{1}^{\top})
\mathbf{N}_n{\longrightarrow}\
\mathcal{N} \left(\ \mathbf{0}\ ,\
Z_{\infty}(1-Z_{\infty})
\frac{(1-\alpha)^2}{\alpha^2}
(I-N^{-1}\mathbf{1}\mathbf{1}^{\top})\ \right)
\ \ \ \ \ \ \
\hbox{stably}.
$$
\end{itemize}
In this framework, it may be of interest to test whether the unknown
parameter $\alpha$ can be assumed to be equal to a specific value
$\alpha_0\in (0,1]$, i.e. we may be interested in a statistical test
  of the type:
$$
H_0:\, W=W_{\alpha_0}\qquad\mbox{ vs} \qquad
H_1:\, W=W_{\alpha}\ \mbox{for some }\alpha\in (0,1]\setminus\{\alpha_0\}.
$$
To this purpose, assuming $2c\alpha_0\geq 1$ and setting
$\widetilde{N}_n:=N^{-1}\mathbf{1}^{\top}\mathbf{N}_n$, we note that:
\begin{itemize}
\item[(i)] for $\gamma=1$ and $2c\alpha_0>1$, under $H_0$ we have that
$$n\left[\widetilde{N}_n(1-\widetilde{N}_n)\right]^{-1}
\frac{2c\alpha_0-1}{1+2(c-1)\alpha_0^{-1}}\,
\mathbf{N}_n^{\top}(I-N^{-1}\mathbf{1}\mathbf{1}^{\top})\mathbf{N}_n
\stackrel{d}\sim\chi^2_{N-1};
$$
\item[(ii)] for $\gamma=1$ and $2c\alpha_0=1$, under $H_0$ we have that
  $$\frac{n}{\ln(n)}\left[\widetilde{N}_n(1-\widetilde{N}_n)\right]^{-1}
  \frac{\alpha_0^2}{(1-\alpha_0)^2}\,
\mathbf{N}_n^{\top}(I-N^{-1}\mathbf{1}\mathbf{1}^{\top})\mathbf{N}_n
\stackrel{d}\sim\chi^2_{N-1}.$$
\end{itemize}
Concerning the distribution of the above quantities for $\alpha\neq \alpha_0$,
since the eigenvectors of $W$ do not depend on $\alpha$, we have that,
for any fixed $\alpha\in (0,1]\setminus\{\alpha_0\}$, under the hypothesis
  $\{W=W_{\alpha}\}\subset H_1$, we have that:
\begin{itemize}
\item[(i)] for $\gamma=1$, $2c\alpha_0>1$ and
for any $\alpha\neq\alpha_0$ such that $2c\alpha>1$,
$$\frac{n}{\widetilde{N}_n(1-\widetilde{N}_n)}
\frac{2c\alpha_0-1}{1+2(c-1)\alpha_0^{-1}}
\mathbf{N}_n^{\top}(I-N^{-1}\mathbf{1}\mathbf{1}^{\top})\mathbf{N}_n
\stackrel{d}\sim
\left(\frac{2c\alpha_0-1}{2c\alpha-1}\right)
\left(\frac{1+2(c-1)\alpha^{-1}}{1+2(c-1)\alpha_0^{-1}}\right)\chi^2_{N-1};$$
while, if $2c\alpha=1$, the above quantity
converges in probability to infinity;
\item[(ii)] for $\gamma=1$, $2c\alpha_0=1$ and for any $\alpha$ such that
$2c\alpha>1$ (which obviously implies $\alpha\neq\alpha_0$), we have
$$\frac{n}{\ln(n)}\left[\widetilde{N}_n(1-\widetilde{N}_n)\right]^{-1}
  \frac{\alpha_0^2}{(1-\alpha_0)^2}\,
\mathbf{N}_n^{\top}(I-N^{-1}\mathbf{1}\mathbf{1}^{\top})\mathbf{N}_n
\stackrel{P}{\longrightarrow}0.$$
\end{itemize}
\qed
\end{example}

The case $1/2<\gamma<1$ requires further future investigation. Indeed,
since $V^{\top}\mathbf{1}=\mathbf{0}$ (by~\eqref{eq:relazioni-0} and
\eqref{eq:relazioni-1}), from
Theorem~\ref{thm:asymptotics_theta_gamma} we obtain that
$n^{\gamma-\frac{1}{2}} UV^{\top}\mathbf{N}_n\to \mathbf{0}$ in
probability.  Then, a central limit theorem for
$UV^{\top}\mathbf{N}_n$ with the exact convergence rate (if exists) is
needful. In this paper, as we will see more ahead in
Remark~\ref{rem:O_gamma_2}, by the computations done in the proofs of
Section~\ref{section_proofs} we can only affirm that $n^{e}
UV^{\top}\mathbf{N}_n\to \mathbf{0}$ in probability for all
$e<\gamma/2$ and, when $e=\gamma/2$, the random vector $n^{e}
UV^{\top}\mathbf{N}_n$ is the sum of a term converging to zero in
probability and a term bounded in $L^1$.  Therefore, further analysis
on the asymptotic behavior of $n^{\gamma/2}UV^{\top}\mathbf{N}_n$
results to be interesting for future developments.

\section{Proofs}\label{section_proofs}

This section contains all the proofs of the results presented in the
previous Section~\ref{section_asymptotic_results}.

\subsection{Preliminary relations and results}

We start by recalling that, given the eigen-structure of $W$ described
in Section~\ref{section_model}, the matrix ${\mathbf
  u}_1{\mathbf v}_1^{\top}$ has real elements and
the following relations hold:
\begin{equation}\label{eq:relazioni-2}
V^{\top}\,\mathbf{u}_1=U^{\top}\,\mathbf{v}_1=\mathbf{0},\quad
V^{\top}\,U=U^{\top}\,V=I\quad\mbox{and}\quad
I={\mathbf u}_1{\mathbf v}_1^{\top} + UV^{\top},
\end{equation}
which implies that the matrix $UV^{\top}$ has real elements. Moreover,
using the matrix $D$ defined in Section~\ref{section_model}, we can
decompose the matrix $W^{\top}$ as follows:
\begin{equation}\label{eq:decomposition-matrix}
W^{\top}\ =\ {\mathbf u}_1{\mathbf v}_1^{\top}\ +\ UDV^{\top}.
\end{equation}

\indent Now, in order to understand the asymptotic behavior of the
stochastic processes $(\mathbf{Z}_{n})_n$ and $(\mathbf{N}_{n})_n$,
let us express the dynamics~\eqref{eq:dynamic} as follows:
\begin{equation}\label{eq:dynamic_SA}
\left\{\begin{aligned}
&\mathbf{Z}_{n+1}-\mathbf{Z}_{n}\ =\ -r_n\left(I-W^{\top}\right)\mathbf{Z}_{n}\
+\ r_n\Delta\mathbf{M}_{n+1},\\
&\mathbf{N}_{n+1}-\mathbf{N}_{n}\ =\
-\frac{1}{n+1}\left(\mathbf{N}_{n}-W^{\top}\mathbf{Z}_{n}\right)\
+\ \frac{1}{n+1}\Delta\mathbf{M}_{n+1},
\end{aligned}\right.
\end{equation}
where
$\Delta\mathbf{M}_{n+1}=(\mathbf{X}_{n+1}-W^{\top}\mathbf{Z}_{n})$ is
a martingale increment with respect to the filtration~${\mathcal
  F}:=({\mathcal F}_{n})_n$. Furthermore, we decompose the stochastic
process $(\mathbf{Z}_{n})_n$ as
\begin{equation}\label{eq:decomposition_Z}
\mathbf{Z}_{n}=\widetilde{Z}_n\mathbf{1} + \widehat{\mathbf{Z}}_{n}
=\sqrt{N}\widetilde{Z}_n \mathbf{u}_1 + \widehat{\mathbf{Z}}_{n},
\;\mbox{where}\;
\left\{
\begin{aligned}
&\widetilde{Z}_{n}:=N^{-1/2}\,\mathbf{v}_1^{\top}\,\mathbf{Z}_{n}, \\
&\widehat{\mathbf{Z}}_{n}:={\mathbf Z}_n-{\mathbf 1}{\widetilde Z}_n
=(I-\mathbf{u}_1\mathbf{v}_1^{\top})\mathbf{Z}_n
=U\,V^{\top}\,\mathbf{Z}_{n};
\end{aligned}
\right.
\end{equation}
while we decompose the stochastic process $(\mathbf{N}_{n})_n$ as
\begin{equation}\label{eq:decomposition_N}
\mathbf{N}_{n}=\widetilde{Z}_n\mathbf{1} + \widehat{\mathbf{N}}_{n}
=\sqrt{N}\widetilde{Z}_n \mathbf{u}_1 + \widehat{\mathbf{N}}_{n},
\qquad\mbox{where}\qquad
\widehat{\mathbf{N}}_{n}:={\mathbf N}_n-{\widetilde Z}_n{\mathbf 1}.
\end{equation}
Then, the asymptotic behavior of the joint stochastic process
$(\mathbf{Z}_{n},\mathbf{N}_{n})_n$ is obtained by establishing the asymptotic
behavior of $(\widetilde{Z}_{n})_n$ and of
$(\widehat{\mathbf{Z}}_{n},\widehat{\mathbf{N}}_{n})_n$.

\begin{rem}
\rm In the particular case when $W$ is doubly stochastic, we have
$\mathbf{v}_1=\mathbf{u}_1=N^{-1/2}\mathbf{1}$.
As a consequence, we have
\begin{equation*}
\widetilde{Z}_{n}=N^{-1}\mathbf{1}^{\top}\mathbf{Z}_{n}
=N^{-1}\sum_{j=1}^N Z_{n,j},
\end{equation*}
which represents the average of the stochastic processes $Z_{n,j}$,
with $j\in V$, in the network, and
$$
\widehat{\mathbf{Z}}_{n}=
\left(I-N^{-1}\mathbf{1}\mathbf{1}^{\top}\right)\mathbf{Z}_n
\quad\hbox{and}\quad
\widehat{\mathbf{N}}_n=\mathbf{N}_n-N^{-1}\mathbf{1}\mathbf{1}^{\top}\mathbf{Z}_n.
$$ Notice that the assumed normalization
$W^{\top}\mathbf{1}=\mathbf{1}$ implies that symmetric matrices $W$
are also doubly stochastic. Therefore, the above equalities hold for
any undirected graph for which $W$ is symmetric by definition.
\end{rem}

Concerning the real-valued stochastic process $(\widetilde{Z}_{n})_n$,
from~\cite[Section 4.2]{ale-cri-ghi} we have that it is an $\mathcal F$-martingale
with values in $[0,1]$ and its dynamics can be expressed as follows:
\begin{equation}\label{eq:dynamic_SA_tilde}
\widetilde{Z}_{n+1}-\widetilde{Z}_{n}\ =\
N^{-1/2} r_n\left(\mathbf{v}_1^{\top}\Delta\mathbf{M}_{n+1}\right).
\end{equation}
In particular, we have that ${\widetilde
  Z}_n\stackrel{a.s.}\longrightarrow Z_{\infty}$ and
in~\cite{ale-cri-ghi} the following central limit theorem for
$(\widetilde{Z}_{n})_n$ is esta\-bli\-shed:

\begin{theo}\cite[Theorem 4.2]{ale-cri-ghi}\label{thm:asymptotics_Z_tilde}
For $N\geq 1$ and $1/2<\gamma\leq 1$, we have
\begin{equation*}
n^{\gamma-\frac{1}{2}}
\left(\widetilde{Z}_{n}-Z_{\infty}\right)\
{\longrightarrow}\
\mathcal{N}
\left(\ 0\ ,\ \widetilde{\sigma}_{\gamma}^2\,Z_{\infty}(1-Z_{\infty})\ \right)\
\ \ \ stably,
\end{equation*}
where $\widetilde{\sigma}_{\gamma}^2$ is defined as
in~\eqref{def:Sigmatilde_gamma} (also for $\gamma=1$).  The above
convergence is also in the sense of the almost sure conditional
convergence w.r.t.~${\mathcal F}$.
\end{theo}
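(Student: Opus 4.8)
The plan is to exploit the fact, recorded just above the statement, that $(\widetilde{Z}_n)_n$ is a bounded $\mathcal F$-martingale with increments given by~\eqref{eq:dynamic_SA_tilde}, and to apply a stable central limit theorem for martingale difference arrays of the type reviewed in Appendix~\ref{app-B} (the version that delivers not only stable convergence but also the stronger almost sure conditional convergence with respect to $\mathcal F$). Since $(\widetilde{Z}_n)_n$ is bounded, it converges a.s.\ and in $L^2$ to $Z_\infty$, so
$$
\widetilde{Z}_n-Z_\infty\ =\ -\sum_{k\geq n}\left(\widetilde{Z}_{k+1}-\widetilde{Z}_k\right)
\ =\ -\,N^{-1/2}\sum_{k\geq n} r_k\left(\mathbf v_1^\top\Delta\mathbf M_{k+1}\right),
$$
and hence $n^{\gamma-1/2}(\widetilde{Z}_n-Z_\infty)$ is, up to an irrelevant sign (the limit law is symmetric), the rescaled tail of the martingale with increments $\xi_{k+1}:=N^{-1/2} r_k\,\mathbf v_1^\top\Delta\mathbf M_{k+1}$. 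Concretely I would introduce, for each $n$, the partial sums $T^{(n)}_m:=n^{\gamma-1/2}\sum_{k=n}^{m-1}\xi_{k+1}$, note that $T^{(n)}_m\to -n^{\gamma-1/2}(\widetilde Z_n-Z_\infty)$ a.s.\ and in $L^2$ as $m\to\infty$, and apply the stable CLT to the (reverse-indexed) array $\{n^{\gamma-1/2}\xi_{k+1}:\ k\geq n\}$, whose two hypotheses to verify are: (a) a.s.\ convergence of the sum of conditional second moments to the announced random variance, and (b) a conditional Lindeberg negligibility condition.

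Condition (b) is immediate: $\Delta\mathbf M_{k+1}=\mathbf X_{k+1}-W^\top\mathbf Z_k$ is bounded, so $|n^{\gamma-1/2}\xi_{k+1}|\leq C\,n^{\gamma-1/2} r_k$, and since $r_k=O(k^{-\gamma})$ we get $\sup_{k\geq n}|n^{\gamma-1/2}\xi_{k+1}|=O(n^{-1/2})\to 0$, which trivially gives the Lindeberg condition. For (a), the key observation is that, conditionally on $\mathcal F_k$, the components of $\mathbf X_{k+1}$ are independent Bernoulli variables with parameters $(W^\top\mathbf Z_k)_j$ by~\eqref{eq:dynamic-0}, so $\Delta\mathbf M_{k+1}$ has conditional covariance matrix $\mathrm{diag}\bigl((W^\top\mathbf Z_k)_j(1-(W^\top\mathbf Z_k)_j)\bigr)$; recalling that $\mathbf Z_k\stackrel{a.s.}{\longrightarrow}Z_\infty\mathbf 1$ (Theorem~\ref{th:sincro}) and $W^\top\mathbf 1=\mathbf 1$, this matrix converges a.s.\ to $Z_\infty(1-Z_\infty)I$, whence, using that $\mathbf v_1$ is real,
$$
E\bigl[(\mathbf v_1^\top\Delta\mathbf M_{k+1})^2\mid\mathcal F_k\bigr]\ \stackrel{a.s.}{\longrightarrow}\ Z_\infty(1-Z_\infty)\,\|\mathbf v_1\|^2 .
$$
Combining this with the elementary estimate $\sum_{k\geq n} r_k^2\sim c^2 n^{1-2\gamma}/(2\gamma-1)$ (from $n^\gamma r_n\to c$ with $1/2<\gamma\leq 1$, by comparison with $\int_n^\infty x^{-2\gamma}\,dx$), together with a Toeplitz/Cesàro argument on the nonnegative weights $r_k^2$, one obtains
$$
n^{2\gamma-1}\,N^{-1}\sum_{k\geq n} r_k^2\,E\bigl[(\mathbf v_1^\top\Delta\mathbf M_{k+1})^2\mid\mathcal F_k\bigr]
\ \stackrel{a.s.}{\longrightarrow}\ \frac{c^2\|\mathbf v_1\|^2}{N(2\gamma-1)}\,Z_\infty(1-Z_\infty)\ =\ \widetilde{\sigma}_\gamma^2\,Z_\infty(1-Z_\infty),
$$
which is precisely condition (a) with the claimed limit. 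The stable martingale CLT then yields $n^{\gamma-1/2}(\widetilde{Z}_n-Z_\infty)\to\mathcal N\bigl(0,\widetilde{\sigma}_\gamma^2 Z_\infty(1-Z_\infty)\bigr)$ stably, and in the sense of $\mathcal F$-conditional almost sure convergence.

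The main obstacle is that this is not an ordinary partial-sum CLT but a statement about the \emph{tail} of a convergent martingale — i.e.\ about the speed at which $\widetilde{Z}_n$ reaches its limit — so one must either invoke a version of the stable martingale CLT tailored to reverse-indexed arrays, or justify carefully the interchange of $n\to\infty$ with the implicit inner summation over the tail (e.g.\ by a diagonal argument letting an auxiliary index $m=m(n)\to\infty$ quickly, controlling $E[(T^{(n)}_{m(n)}+n^{\gamma-1/2}(\widetilde Z_n-Z_\infty))^2]$ via the $L^2$-boundedness of the martingale). This is where the boundedness of the martingale and the uniform smallness from (b) do the real work. A secondary, milder point is making the Toeplitz passage $\sum_{k\geq n} r_k^2\,E[\,\cdot\mid\mathcal F_k]\sim Z_\infty(1-Z_\infty)\|\mathbf v_1\|^2\sum_{k\geq n} r_k^2$ fully rigorous and controlling the $o(1)$ in $n^\gamma r_n=c+o(1)$ when squaring; neither presents a genuine difficulty.
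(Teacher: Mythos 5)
This theorem is not proved inside the present paper at all: it is imported verbatim from \cite[Theorem 4.2]{ale-cri-ghi}, so the paper's ``proof'' is a citation, and the comparison must be with the argument in that reference. Your route --- writing $\widetilde Z_n-Z_\infty$ as the tail of the bounded $\mathcal F$-martingale with increments \eqref{eq:dynamic_SA_tilde}, checking the conditional variance condition via \eqref{eq:covar-conditional-M}--\eqref{eq:var-conditional-M} together with $\sum_{k\geq n}r_k^2\sim c^2n^{1-2\gamma}/(2\gamma-1)$, checking negligibility from $r_k=O(k^{-\gamma})$, and invoking a CLT for martingale tail sums in the almost sure conditional sense (as in \cite{crimaldi-2009}) --- is exactly the approach used there and is sound; the one step you only gesture at, namely replacing the conditional second moments by the weighted sums of actual squared increments that the tail CLT requires, is handled in the cited proof by the same series/Toeplitz-type lemma reproduced here as Lemma~\ref{lemma-serie-rv}.
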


Concerning the multi-dimensional real stochastic process
$(\widehat{\mathbf{Z}}_n)_n$, we
firstly recall the relation
\begin{equation}\label{rel-tecnica-i}
W^{\top}\widehat{\mathbf{Z}}_{n}\ =\ UDV^{\top}\widehat{\mathbf{Z}}_{n},
\end{equation}
which is due to \eqref{eq:relazioni-2} and
\eqref{eq:decomposition-matrix}, and, moreover, we recall that
from~\cite[Section 4.2]{ale-cri-ghi} we have the dynamics
\begin{equation}\label{eq:dynamic_SA_hat}
\widehat{\mathbf{Z}}_{n+1}-\widehat{\mathbf{Z}}_{n}=
-r_nU(I-D)V^{\top}\widehat{\mathbf{Z}}_{n} + r_n UV^{\top}\,\Delta\mathbf{M}_{n+1}
\end{equation}
and $\widehat{\mathbf{Z}}_n\stackrel{a.s.}\longrightarrow \mathbf{0}$.
\\

\indent Finally, concerning the multi-dimensional real stochastic process
$(\widehat{\mathbf{N}}_n)_n$, using~\eqref{eq:dynamic_SA},
\eqref{eq:decomposition_Z}, \eqref{eq:decomposition_N} and the
assumption $W^{\top}\mathbf{1}=\mathbf{1}$ (which implies
$W^{\top}\mathbf{Z}_n=\widetilde{Z}_n\mathbf{1}+W^{\top}\widehat{\mathbf{Z}}_n$),
we obtain the dynamics:
\begin{equation}\label{eq:dynamic_SA_N_hat}
\widehat{\mathbf{N}}_{n+1}-\widehat{\mathbf{N}}_{n}=
-\frac{1}{n+1}(\widehat{\mathbf{N}}_{n}-W^{\top}\widehat{\mathbf{Z}}_{n}) +
\frac{1}{n+1}
\Delta\mathbf{M}_{n+1}-(\widetilde{Z}_{n+1}-\widetilde{Z}_n)\mathbf{1}.
\end{equation}

\subsection{Proof of Theorem \ref{th:sincro}
(Almost sure synchronization of the empirical means)}

We recall that in \cite[Theorem 3.1]{ale-cri-ghi}, by
decomposition~\eqref{eq:decomposition_Z}, i.e. $\mathbf{Z}_{n}
=\widetilde{Z}_n\mathbf{1}+\widehat{\mathbf{Z}}_{n}$, and combining
${\widetilde Z}_n\stackrel{a.s.}\longrightarrow Z_{\infty}$ and
$\widehat{\mathbf{Z}}_n\stackrel{a.s.}\longrightarrow \mathbf{0}$,
it is proved that
$\mathbf{Z}_{n}\stackrel{a.s.}\longrightarrow Z_{\infty}\mathbf{1}$.
As a consequence, using
$W^{\top}\mathbf{1}=\mathbf{1}$ and~\eqref{eq:dynamic-0}, we obtain
$E[\mathbf{X}_{n}|{\mathcal F}_{n-1}]\stackrel{a.s.}\longrightarrow
Z_{\infty}\mathbf{1}$ and, applying Lemma~\ref{lemma-serie-rv} (with
$c_k=k$, $v_{n,k}=k/n$ and $\eta=1$), we get that $\mathbf{N}_n
\stackrel{a.s.}\longrightarrow\ Z_{\infty}\mathbf{1}$.  This concludes
the proof of the first part of the theorem, concerning the
synchronization result.  For the second part, that is the results on
the limit random variable $Z_\infty$, we refer to
\cite[Theorem 3.5 and Theorem 3.6]{ale-cri-ghi}. \qed \\

\indent Note that, by the synchronization result for $(Z_n)$, we can state that
\begin{equation}\label{eq:multidim-limite-conditional-M}
E[(\Delta{\mathbf M}_{n+1})(\Delta{\mathbf M}_{n+1})^{\top}
\,|\,{\mathcal F}_n]
\stackrel{a.s.}\longrightarrow Z_{\infty}(1-Z_{\infty})I.
\end{equation}
Indeed, since $\{X_{n+1,j}:\, j=1,\dots, N\}$ are conditionally independent
given $\mathcal{F}_n$, we have
\begin{equation}\label{eq:covar-conditional-M}
E[\Delta{M}_{n+1,h}\Delta{M}_{n+1,j}\,|\,{\mathcal F}_n]=0\quad
\mbox{ for } h\neq j;
\end{equation}
while, for each $j$, using the normalization $W^{\top}{\mathbf
  1}={\mathbf 1}$, we have
\begin{equation}\label{eq:var-conditional-M}
E[(\Delta{M}_{n+1,j})^2\,|\,{\mathcal F}_n]=
\left(\sum_{h=1}^N w_{h,j}Z_{n,h}\right)
\left(1-\sum_{h=1}^N w_{h,j}Z_{n,h}\right)
\stackrel{a.s.}\longrightarrow Z_{\infty}(1-Z_{\infty}).
\end{equation}

\subsection{Proof of Theorem \ref{thm:asymptotics_theta_gamma}
(CLT for $({\mathbf Z}_n,{\mathbf N}_n)_n$ in the case $1/2<\gamma<1$)}

In order to prove Theorem \ref{thm:asymptotics_theta_gamma}, we need
to provide the asymptotic behavior of the stochastic processes
$(\widehat{\mathbf{Z}}_{n})_n$ and
$(\widehat{\mathbf{N}}_{n})_n$. First of all, we recall that
$\widehat{\mathbf{Z}}_{n}=0$ for each $n$ when $N=1$ and, for $N\geq
2$ and $1/2<\gamma<1$, we have from \cite[Theorem 4.3]{ale-cri-ghi} that
\begin{equation}\label{eq:CLT_Z_hat_gamma}
n^{\frac{\gamma}{2}}\,\widehat{\mathbf{Z}}_{n}\longrightarrow\
\mathcal{N}\left( \mathbf{0} ,
Z_{\infty}(1-Z_{\infty})\widehat{\Sigma}_{\gamma}\right)
\ \ \ \ \hbox{stably},
\end{equation}
where
\begin{equation*}
\widehat{\Sigma}_{\gamma}:=U\widehat{S}_{\gamma}U^{\top}
\qquad\mbox{ and }\qquad
[\widehat{S}_{\gamma}]_{h,j}:=
\frac{c}{2-(\lambda_h+\lambda_j)} (\mathbf{v}_{h}^{\top}\mathbf{v}_{j})\
\mbox{ with }\ 2\leq h,j\leq N.
\end{equation*}
Moreover, looking at the proof of \eqref{eq:CLT_Z_hat_gamma} in
\cite{ale-cri-ghi}, it is easy to realize that for $N\geq 2$ and
$1/2<\gamma<1$ we have $\lim_n
n^{\gamma}E\left[\,\|\widehat{\mathbf{Z}}_n\|^2\,\right]=C$, where $C$
is a suitable constant in $(0,+\infty)$, and so, recalling that
$\widehat{\mathbf{Z}}_n=0$ for each $n$ when $N=1$, we can affirm
that, for every $N\geq 1$ and $1/2<\gamma<1$, we have that
\begin{equation}\label{eq:L2_Z_hat}
E\left[\,\|\widehat{\mathbf{Z}}_n\|^2\,\right]=O( n^{-\gamma} ).
\end{equation}
Regarding the stochastic process $(\widehat{\mathbf{N}}_{n})_n$, we
are going to prove the following convergence result:

\begin{theo}\label{thm:asymptotics_theta_hat_gamma}
For $N\geq 1$ and $1/2<\gamma<1$, we have that
\begin{equation}\label{eq:CLT_N_hat_gamma}
n^{\gamma -\frac{1}{2}}\,\widehat{\mathbf{N}}_{n}\ \stackrel{d}\longrightarrow\
\mathcal{N}\left(\mathbf{0},
Z_{\infty}(1-Z_\infty)\widehat{\Gamma}_{\gamma}\right)
\ \ \ \ stably,
\end{equation}
where $\widehat{\Gamma}_{\gamma}$ is the matrix defined in
\eqref{def:Sigmahat_NN_gamma}.
\end{theo}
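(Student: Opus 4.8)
The plan is to analyze the dynamics \eqref{eq:dynamic_SA_N_hat} of $(\widehat{\mathbf N}_n)_n$ directly. The key observation is that $\widehat{\mathbf N}_n$ involves three driving contributions of different orders: the "self-stabilizing" term $-\frac{1}{n+1}(\widehat{\mathbf N}_n - W^\top\widehat{\mathbf Z}_n)$, the martingale increment $\frac{1}{n+1}\Delta\mathbf M_{n+1}$, and the increment $-(\widetilde Z_{n+1}-\widetilde Z_n)\mathbf 1$ coming from the common-component martingale. I expect that, after rescaling by $n^{\gamma-1/2}$, the contribution of $W^\top\widehat{\mathbf Z}_n$ is asymptotically negligible — by \eqref{eq:L2_Z_hat} we have $E[\|\widehat{\mathbf Z}_n\|^2]=O(n^{-\gamma})$, so $\sum_k k^{-1}\|\widehat{\mathbf Z}_k\|$ is controlled and its rescaled tail contribution vanishes — and likewise the martingale term $\sum_k \frac{1}{k}\Delta\mathbf M_{k}$ rescaled by $n^{\gamma-1/2}$ vanishes since $\gamma<1$ means $n^{\gamma-1/2}\cdot n^{-1/2}=n^{\gamma-1}\to 0$ (here the relevant second-moment estimate is $\sum_k k^{-2}$, convergent). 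Hence, up to terms that tend to $\mathbf 0$ in probability, $n^{\gamma-1/2}\widehat{\mathbf N}_n$ is driven purely by the aggregated increments $-(\widetilde Z_{k+1}-\widetilde Z_k)$; that is why the limit matrix $\widehat{\Gamma}_\gamma = \widehat\sigma_\gamma^2\mathbf 1\mathbf 1^\top$ is rank one and proportional to $\mathbf 1\mathbf 1^\top$, just like $\widetilde\Sigma_\gamma$ but with a different constant.

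Concretely, I would first solve the linear recursion \eqref{eq:dynamic_SA_N_hat} explicitly, writing $\widehat{\mathbf N}_n = \sum_{k=1}^{n} a_{n,k}\,\boldsymbol{\xi}_k$ where $a_{n,k}=\prod_{i=k}^{n-1}(1-\tfrac{1}{i+1}) = k/n$ are the propagation coefficients (the same $k/n$ weights already used in the proof of Theorem~\ref{th:sincro}) and $\boldsymbol{\xi}_k$ collects the three source terms at step $k$. This yields
\begin{equation*}
\widehat{\mathbf N}_n = \frac{1}{n}\sum_{k=1}^n k\Big[\tfrac{1}{k}W^\top\widehat{\mathbf Z}_{k-1} + \tfrac{1}{k}\Delta\mathbf M_k - (\widetilde Z_k - \widetilde Z_{k-1})\mathbf 1\Big] + (\text{initial term}),
\end{equation*}
where the initial term decays like $1/n$. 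The first two sums inside, once multiplied by $n^{\gamma-1/2}/n$, are shown to be $o_P(1)$ using \eqref{eq:L2_Z_hat} and the martingale $L^2$-bound respectively (a summation-by-parts / Kronecker-type argument, or the technical Lemma~\ref{lemma-serie-rv} invoked earlier). The surviving term is $-\frac{n^{\gamma-1/2}}{n}\big(\sum_{k=1}^n k(\widetilde Z_k - \widetilde Z_{k-1})\big)\mathbf 1$, which after summation by parts equals $-n^{\gamma-1/2}\big(\widetilde Z_n - \frac{1}{n}\sum_{k=1}^n \widetilde Z_{k-1}\big)\mathbf 1 = -n^{\gamma-1/2}\big(\widetilde Z_n - \widetilde N_n^{(Z)}\big)\mathbf 1$ in an obvious notation. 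Using \eqref{eq:dynamic_SA_tilde}, this is a weighted sum of the martingale increments $N^{-1/2}r_{k-1}\mathbf v_1^\top\Delta\mathbf M_k$ with weights summing in a way that, together with $n^\gamma r_n\to c$ and \eqref{eq:multidim-limite-conditional-M}, produces the variance constant
\begin{equation*}
\widehat\sigma_\gamma^2 = \frac{c^2\|\mathbf v_1\|^2}{N}\int_0^1 (1-s^{?})^2\,\ldots\,ds = \frac{c^2\|\mathbf v_1\|^2}{N(3-2\gamma)},
\end{equation*}
where the exponent bookkeeping in the integral is exactly what turns $2\gamma-1$ (for $\widetilde\Sigma_\gamma$) into $3-2\gamma$; I would verify this by computing $\lim_n n^{2\gamma-1}\sum_{k\le n}(\text{weight}_{n,k})^2\,\mathrm{Var}_{\mathcal F_{k-1}}$ and checking it equals $\widehat\sigma_\gamma^2 Z_\infty(1-Z_\infty)$.

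Finally, stable convergence is obtained from a martingale triangular-array CLT in the stable sense (the version recalled in Appendix~\ref{app-B}, as used in \cite{ale-cri-ghi}): the rescaled surviving term is a martingale array in $k$ with respect to $(\mathcal F_k)$, its predictable quadratic variation converges a.s.\ to $\widehat\sigma_\gamma^2 Z_\infty(1-Z_\infty)\mathbf 1\mathbf 1^\top$ by the computation above plus \eqref{eq:var-conditional-M}, and a conditional Lindeberg condition holds because the increments are bounded by $O(k^{\gamma-1/2}r_{k-1})=O(k^{-1/2})\to 0$. The main obstacle I anticipate is the careful error control: showing that the $W^\top\widehat{\mathbf Z}_{k}$ contribution and the $\Delta\mathbf M_k/k$ contribution are genuinely $o_P(1)$ after rescaling requires combining the $L^2$-rate \eqref{eq:L2_Z_hat} with the summation weights $k/n$ in a tight enough way (the crossover is precisely why $1/2<\gamma<1$ is needed — for $\gamma=1$ the $\Delta\mathbf M_k/k$ term no longer vanishes, which is exactly the obstruction flagged in Remark~\ref{rem:different_approach_gamma_1}). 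Once those remainders are dispatched, the identification of the limit is a routine, if lengthy, variance computation.
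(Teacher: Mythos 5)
Your proposal follows essentially the same route as the paper's proof: you solve the recursion with the propagation weights $k/n$ (equivalently, telescope $n\widehat{\mathbf{N}}_n$), kill the $W^{\top}\widehat{\mathbf{Z}}_{k-1}$ remainder via the $L^2$ bound \eqref{eq:L2_Z_hat}, and apply the stable martingale triangular-array CLT (Theorem~\ref{thm:triangular}) to the surviving increments $k(\widetilde{Z}_{k-1}-\widetilde{Z}_k)\mathbf{1}=-N^{-1/2}kr_{k-1}\bigl(\mathbf{v}_1^{\top}\Delta\mathbf{M}_k\bigr)\mathbf{1}$, arriving at the correct constant $c^2\|\mathbf{v}_1\|^2/\bigl(N(3-2\gamma)\bigr)$. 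The only (harmless) difference is that you discard the $\frac{1}{n}\sum_k\Delta\mathbf{M}_k$ piece as $o_P(1)$ beforehand, whereas the paper keeps it inside the array $\mathbf{T}_k$ and shows that its contribution (and the cross terms) to the limiting quadratic variation vanishes.
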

\begin{proof}
We observe that by means of~\eqref{eq:dynamic_SA_N_hat}  we can write
$$n(\widehat{\mathbf{N}}_{n}-\widehat{\mathbf{N}}_{n-1})=
-\widehat{\mathbf{N}}_{n-1}+W^{\top}\widehat{\mathbf{Z}}_{n-1} +
\Delta\mathbf{M}_{n}+n(\widetilde{Z}_{n-1}-\widetilde{Z}_n)\mathbf{1}.$$
Then, using the relation
$$
n(\widehat{\mathbf{N}}_{n}-\widehat{\mathbf{N}}_{n-1})+
\widehat{\mathbf{N}}_{n-1}
\ =\ n\widehat{\mathbf{N}}_{n}-(n-1)\widehat{\mathbf{N}}_{n-1},
$$
we obtain that
$$n\widehat{\mathbf{N}}_{n}=\sum_{k=1}^n
\left[k\widehat{\mathbf{N}}_{k}-(k-1)
  \widehat{\mathbf{N}}_{k-1}\right] =
W^{\top}\sum_{k=1}^n\widehat{\mathbf{Z}}_{k-1}
+
\sum_{k=1}^n\left[\Delta\mathbf{M}_{k}+k(\widetilde{Z}_{k-1}-\widetilde{Z}_k)
  \mathbf{1}\right].
$$
Now, we set $e:=\gamma-1/2>0$ for each
$1/2<\gamma<1$ and hence from the above expression we get
$n^e\widehat{\mathbf{N}}_n = t_n\sum_{k=1}^n {\mathbf T}_{k}+ W^{\top}{\mathbf
  Q}_{n}$, where $t_n:=1/n^{(1-e)}$, ${\mathbf
  Q}_{n}:=t_n\sum_{k=1}^n \widehat{\mathbf{Z}}_{k-1}$
and
$${\mathbf T}_{k}:=
\Delta\mathbf{M}_k+
k\left(\widetilde{Z}_{k-1}-\widetilde{Z}_k\right){\mathbf 1}
=\Delta\mathbf{M}_k
-N^{-1/2}kr_k\left(\mathbf{v}_1^{\top}\Delta\mathbf{M}_k\right)\mathbf{1}.
$$
The idea of the proof is to study separately the two terms
$$
t_n\sum_{k=1}^n {\mathbf T}_{k}\qquad\mbox{and}\qquad {\mathbf Q}_{n}\,.
$$
More precisely, we are going to prove that the first term converges
stably to the desired Gaussian kernel, while the second term converges
in probability to zero.
\\

\noindent {\em First step: the convergence result for $t_n\sum_{k=1}^n {\mathbf
    T}_{k}$.}\\
\noindent We note that $(\mathbf{T}_{k})_{1\leq k\leq n}$ is a
martingale difference array with respect to ${\mathcal F}$. Therefore,
we want to apply Theorem~\ref{thm:triangular} (with $k_n=n$,
$\mathbf{T}_{n,k}=\mathbf{T}_k$ and ${\mathcal G}_{n,k}={\mathcal
  F}_k$). To this purpose, we observe that condition~(c1) is obviously
satisfied and so we have to prove only conditions~(c2) and~(c3).
\\

\indent Regarding condition~(c2), we note that

\begin{multline*}
\sum_{k=1}^n {\mathbf T}_k{\mathbf T}_k^{\top}
=
\sum_{k=1}^n \Delta\mathbf{M}_k(\Delta\mathbf{M}_k)^{\top}
+
N^{-1}\sum_{k=1}^n k^2r_k^2\left(\mathbf{v}_1^{\top}\Delta\mathbf{M}_k\right)^2
\mathbf{1}\mathbf{1}^{\top}\\
-N^{-1/2}\sum_{k=1}^n kr_k \left(\mathbf{v}_1^{\top}\Delta\mathbf{M}_k\right)
\Delta\mathbf{M}_k\mathbf{1}^{\top}
-N^{-1/2}\sum_{k=1}^n kr_k \left(\mathbf{v}_1^{\top}\Delta\mathbf{M}_k\right)
\mathbf{1}(\Delta\mathbf{M}_k)^{\top}.
\end{multline*}
The convergence rate of each of the four terms will be determined in the
following.
\\

\indent By~\eqref{eq:multidim-limite-conditional-M}
and~Lemma~\ref{lemma-serie-rv}
(with $c_k=k$, $v_{n,k}=k/n$ and $\eta=1$),
for the first term, we obtain that
\begin{equation*}
n^{-1}\sum_{k=1}^n \Delta\mathbf{M}_k(\Delta\mathbf{M}_k)^{\top}
\stackrel{a.s.}\longrightarrow  Z_{\infty}(1-Z_{\infty})I\,.
\end{equation*}
Moreover, regarding the second term, by \eqref{relazione-nota} we
have that
\begin{equation*}
\lim_n n^{-2(1-e)} \sum_{k=1}^n k^2 r_k^2=
c^2 \lim_n n^{-2(1-e)} \sum_{k=1}^n \frac{1}{k^{1-2(1-e)}}=
\frac{c^2}{2(1-e)}
\end{equation*}
and, since by \eqref{eq:covar-conditional-M} and
\eqref{eq:var-conditional-M} we have that
\begin{equation*}
E\left[\left(\mathbf{v}_1^{\top}\Delta\mathbf{M}_k\right)^2\,|\,
{\mathcal F}_{k-1}\right]
=\sum_{j=1}^N v_{1,j}^2 E\left[(\Delta M_{k,j})^2|{\mathcal F}_{k-1}\right]
\stackrel{a.s.}\longrightarrow
\|\mathbf{v}_1\|^2 Z_{\infty}(1-Z_{\infty})\,,
\end{equation*}
by~Lemma~\ref{lemma-serie-rv} again (with $c_k=k$,
$v_{n,k}=k^3r_k^2/n^{2(1-e)}$ and $\eta=\frac{c^2}{2(1-e)}$), we
obtain that
\begin{equation*}
n^{-2(1-e)}\,N^{-1}
\sum_{k=1}^n k^2r_k^2\left(\mathbf{v}_1^{\top}\Delta\mathbf{M}_k\right)^2
\mathbf{1}\mathbf{1}^{\top}
\stackrel{a.s.}\longrightarrow
\frac{c^2}{2(1-e)N}\|\mathbf{v}_1\|^2 Z_{\infty}(1-Z_{\infty})
\mathbf{1}\mathbf{1}^{\top}\,.
\end{equation*}
Furthermore, concerning the third term, by \eqref{relazione-nota} we
have that
\begin{equation*}
\lim_n n^{-(1+\frac{1}{2}-e)}\sum_{k=1}^n k r_k=
c\lim_n n^{-(1+\frac{1}{2}-e)}\sum_{k=1}^n k^{\frac{1}{2}-e}=
\frac{c}{1+\frac{1}{2}-e}\,.
\end{equation*}
On the other hand, by means of~\eqref{eq:covar-conditional-M} and
\eqref{eq:var-conditional-M}, we have that
\begin{equation*}
E\Big[\Big(\mathbf{v}_1^{\top}\Delta\mathbf{M}_k\Big)
\Delta\mathbf{M}_k\mathbf{1}^{\top}\,|\,
{\mathcal F}_{k-1}\Big]
=E\Big[\Big(\sum_{j=1}^N v_{1,j}\Delta M_{k,j}\Big)
\Delta\mathbf{M}_k\mathbf{1}^{\top}\,|\,
{\mathcal F}_{k-1}\Big]
\stackrel{a.s.}\longrightarrow
\mathbf{v}_1\mathbf{1}^{\top} Z_{\infty}(1-Z_{\infty}),
\end{equation*}
and so, by~Lemma~\ref{lemma-serie-rv} again (with $c_k=k$,
$v_{n,k}=k r_k/n^{1+\frac{1}{2}-e}$ and $\eta=\frac{c}{(1+1/2-e)}$),
it follows
\begin{equation*}
 n^{-(1+\frac{1}{2}-e)}\,
N^{-1/2}\sum_{k=1}^n kr_k \left(\mathbf{v}_1^{\top}\Delta\mathbf{M}_k\right)
\Delta\mathbf{M}_k\mathbf{1}^{\top}
\stackrel{a.s.}\longrightarrow
\frac{c}{(1+1/2-e)\sqrt{N}}Z_{\infty}(1-Z_{\infty})\mathbf{v}_1\mathbf{1}^{\top}\,.
\end{equation*}
Finally, for the convergence of the fourth term, we can argue as we
have just done for the third one. Indeed, observing that,
by~\eqref{eq:covar-conditional-M} and \eqref{eq:var-conditional-M}, we
have that
\begin{equation*}
E\Big[\Big(\mathbf{v}_1^{\top}\Delta\mathbf{M}_k\Big)
\mathbf{1}(\Delta\mathbf{M}_k)^{\top}\,|\,
{\mathcal F}_{k-1}\Big]
=E\Big[\mathbf{1}\Big(\sum_{j=1}^N v_{1,j}\Delta M_{k,j}\Big)
(\Delta\mathbf{M}_k)^{\top}\,|\,
{\mathcal F}_{k-1}\Big]
\stackrel{a.s.}\longrightarrow
\mathbf{1}\mathbf{v}_1^{\top} Z_{\infty}(1-Z_{\infty})\,,
\end{equation*}
we get
\begin{equation*}
 n^{-(1+\frac{1}{2}-e)}\,N^{-1/2}
\sum_{k=1}^n kr_k \left(\mathbf{v}_1^{\top}\Delta\mathbf{M}_k\right)
\mathbf{1}(\Delta\mathbf{M}_k)^{\top}
\stackrel{a.s.}\longrightarrow
\frac{c}{(1+1/2-e)\sqrt{N}} Z_{\infty}(1-Z_{\infty})\mathbf{1}\mathbf{v}_1^{\top}\,.
\end{equation*}
Summing up, since for $1/2<\gamma<1$ we have $2(1-e)>1$ and
$2(1-e)>1+1/2-e$, we obtain that
\begin{equation}\label{var-asin-gamma}
\begin{split}
t_n^2\sum_{k=1}^n {\mathbf T}_k{\mathbf T}_k^{\top}
=
\frac{1}{n^{2(1-e)}}\sum_{k=1}^n {\mathbf T}_k{\mathbf T}_k^{\top}
&\stackrel{a.s.}\longrightarrow
0+\frac{c^2}{N}\|\mathbf{v}_1\|^2 \frac{1}{2(1-e)} Z_{\infty}(1-Z_{\infty})
\mathbf{1}\mathbf{1}^{\top}
-0-0
\\
&= Z_{\infty}(1-Z_\infty)\widehat{\Gamma}_{\gamma}\,.
\end{split}
\end{equation}
\indent Regarding condition~(c3), we note that
\begin{equation*}
t_n \sup_{1\leq k\leq n} |\mathbf{T}_k|=
\frac{1}{n^{1-e}} \sup_{1\leq k\leq n} O(k^{1-\gamma})
= O(1/n^{\gamma-e})=O(1/\sqrt{n})\longrightarrow 0\,.
\end{equation*}
Therefore also this condition is satisfied and we can conclude that
$t_n\sum_{k=1}^n {\mathbf T}_{k}$ converges stably to the Gaussian
kernel with mean zero and random covariance matrix given by
\eqref{var-asin-gamma}.
\\

\noindent{\em Second step: the convergence result for $\mathbf{Q}_n$.}\\
\noindent We aim at proving that $\mathbf{Q}_n$ converges in
probability to zero, that is each component $Q_{n,j}$ converges in
probability to zero. To this purpose, we note that
\begin{equation*}
E\left[\,|Q_{n,j}|\,\right]
\leq
t_n\sum_{k=1}^n
E\left\{\, | \widehat{Z}_{k-1,j} |\, \right\}
\leq
t_n\sum_{k=1}^n
\sqrt{
E\left[\, (\widehat{Z}_{k-1,j})^2 \,\right]
}
\leq t_n\sum_{k=1}^n
\sqrt{
E\left[\, \| \widehat{\mathbf{Z}}_{k-1} \|^2 \,\right]
}
\,.
\end{equation*}

Therefore, recalling that, for $1/2<\gamma<1$, we have $E\left[
  \|\widehat{\mathbf{Z}}_n\|^2\right]=O(n^{-\gamma})$ (see
\eqref{eq:L2_Z_hat}), we can conclude by \eqref{relazione-nota} that
$$
E[\,|Q_{n,j}|\,]=O\Big(t_n\sum_{k=1}^n k^{-\gamma/2}\Big)
\!=\!
O\Big(n^{-(1-e)}\sum_{k=1}^n \frac{1}{k^{1-(1-\gamma/2)}}\Big)
\!=\!
O\Big(n^{-1+e+1-\gamma/2}\Big)
\!=\!
O\Big(\frac{1}{n^{(1-\gamma)/2}}\Big)
\!\to\! 0\,.
$$
\end{proof}

Now, the proof of Theorem \ref{thm:asymptotics_theta_gamma} follows
from the previous result, together with
Theorem~\ref{thm:asymptotics_Z_tilde} and Theorem~\ref{blocco}.
\\

{\it Proof of Theorem \ref{thm:asymptotics_theta_gamma}.} By
Theorem~\ref{thm:asymptotics_Z_tilde}, we have that
\begin{equation*}
n^{\gamma -\frac{1}{2}}(\widetilde{Z}_n-Z_\infty)\mathbf{1}
\longrightarrow {\mathcal N}\big(\mathbf{0},
Z_{\infty}(1-Z_{\infty})\widetilde{\Sigma}_\gamma\big)
\ \ \ \hbox{stably in the strong sense}.
\end{equation*}
Thus, from Theorem \ref{thm:asymptotics_theta_hat_gamma}, applying
Theorem~\ref{blocco}, we obtain that
$$
n^{\gamma -\frac{1}{2}}
\left({\mathbf N}_n-\widetilde{Z}_n{\mathbf 1},
(\widetilde{Z}_n-Z_\infty)\mathbf{1}
\right)
\longrightarrow
\mathcal{N}\left(\mathbf{0},
Z_\infty(1-Z_\infty)\widehat{\Gamma}_\gamma\right)
\otimes
{\mathcal N}\left(\mathbf{0},
Z_{\infty}(1-Z_{\infty})\widetilde{\Sigma}_\gamma\right)
\ \ \ \hbox{stably}.
$$
In order to conclude, it is enough to observe that
$$
n^{\gamma -\frac{1}{2}}
\left(
\mathbf{Z}_n-Z_{\infty}\mathbf{1},
{\mathbf N}_n-Z_{\infty}\mathbf{1}
\right)
=\Phi\left(
n^{\gamma -\frac{1}{2}}
(\mathbf{N}_n-\widetilde{Z}_n\mathbf{1}),
n^{\gamma -\frac{1}{2}}
(\widetilde{Z}_n-Z_{\infty})\mathbf{1}
\right)
+\frac{1}{ n^{(1-\gamma)/2} }
\left(
n^{ \frac{\gamma}{2} } \widehat{\mathbf Z}_n, \mathbf{0}
\right)\,,
$$
where $\Phi(x,y)=(y, x+y)$ and the last term converges in
probability to zero (since $\widehat{\mathbf Z}_n=0$ for each $n$ when
$N=1$ and since \eqref{eq:CLT_Z_hat_gamma} when $N\geq 2$).
\qed

\begin{rem}\label{rem:O_gamma_2}
\rm
With reference to the statistical applications discussed in
Subsection \ref{section_statistics}, we recall that, since
$V^{\top}\mathbf{1}=\mathbf{0}$ (by \eqref{eq:relazioni-2}), we have
$UV^{\top}\mathbf{N}_n=UV^{\top}\widehat{\mathbf{N}}_n$ and
$V^{\top}\widehat{\Gamma}_{\gamma}V$ is the null matrix, and so
from~\eqref{eq:CLT_N_hat_gamma} we can get that $n^{\gamma
  -\frac{1}{2}}UV^{\top}\mathbf{N}_{n}\stackrel{P}\to \mathbf{0}$ for
$1/2<\gamma<1$.  More precisely, following the arguments in the proof
of Theorem~\ref{thm:asymptotics_theta_hat_gamma}, it is possible to
show that, when $1/2<\gamma<1$, we have
$n^{e}UV^{\top}\mathbf{N}_{n}\stackrel{P}\to \mathbf{0}$ for each $e<\gamma/2$.
Indeed, from \eqref{eq:dynamic_SA_N_hat}, together with
\eqref{rel-tecnica-i} and again the relation
$V^{\top}\mathbf{1}=\mathbf{0}$, we obtain
$$ n(UV^{\top}\mathbf{N}_{n}-UV^{\top}\mathbf{N}_{n-1})=
-UV^{\top}\mathbf{N}_{n-1}+W^{\top}\widehat{\mathbf{Z}}_{n-1} +
UV^{\top}\Delta\mathbf{M}_{n}$$ and hence, setting $t_n:=1/n^{1-e}$,
$\mathbf{T}_k:=UV^{\top}\Delta\mathbf{M}_k$ and
$\mathbf{Q}_n:=t_n\sum_{k=1}^n\widehat{\mathbf{Z}}_{k-1}$, we get
$$
n^e UV^{\top}\mathbf{N}_n=t_n\sum_{k=1}^n \mathbf{T}_k+ W^{\top}\mathbf{Q}_n
=\frac{1}{ n^{\frac{1}{2}-e} }
\frac{1}{\sqrt{n}}\sum_{k=1}^n T_k+ W^{\top}\mathbf{Q}_n,
$$ where $\frac{1}{\sqrt{n}}\sum_{k=1}^n \mathbf{T}_k$ converges
stably to the Gaussian kernel $\mathcal{N}(\mathbf{0},
Z_{\infty}(1-Z_\infty)UV^{\top}VU^{\top})$ and
$E[\,|\mathbf{Q}_n|\,]=O( t_nn^{1-\frac{\gamma}{2}} )=O( n^{-(
  \frac{\gamma}{2}-e)} ).$ From these relations, we can also conclude
that for $1/2<\gamma<1$ and $e=\gamma/2$, we have that $n^e
UV^{\top}\mathbf{N}_n$ is the sum of a term converging to zero in
probability and a term bounded in $L^1$. Therefore the asymptotic
behavior of $n^{\gamma/2}UV^{\top}\mathbf{N}_n$ needs further
investigation.
\end{rem}

\subsection{Proof of Theorem \ref{thm:N_1_gamma_1}
(CLT for $({\mathbf Z}_n,{\mathbf N}_n)_n$ in the case $N=1$ and $\gamma=1$)}

The proof in the case $N=1$ and $\gamma=1$ is similar to the one for
$1/2<\gamma<1$. Indeed, using the same arguments as in the proof of
Theorem \ref{thm:asymptotics_theta_hat_gamma}, together with the facts
that $\widetilde{Z}_n=Z_n$, $\widehat{\mathbf Z}_n=0$ for each $n$,
$\mathbf{v}_1=v_{1,1}=1$ and $2(1-e)=1+1/2-e=1$, we obtain that
\begin{equation*}
\sqrt{n}(N_n-Z_n)=\sqrt{n}\,\widehat{N}_{n}
\ \longrightarrow\
\mathcal{N}\left(0,  Z_{\infty}(1-Z_\infty)(c-1)^2\right)
\ \ \ \ \hbox{stably}.
\end{equation*}
On the other hand, by Theorem~\ref{thm:asymptotics_Z_tilde}, we have that
\begin{equation*}
\sqrt{n}(Z_n-Z_\infty)=\sqrt{n}(\widetilde{Z}_n-Z_\infty)
\longrightarrow {\mathcal N}\big(0,
Z_{\infty}(1-Z_{\infty})c^2\big)
\ \ \ \hbox{stably in the strong sense}.
\end{equation*}
Thus, applying Theorem~\ref{blocco}, we obtain
$$
\sqrt{n}
\left(N_n-Z_n, Z_n-Z_\infty\right)
\longrightarrow
\mathcal{N}\left(0,
Z_\infty(1-Z_\infty) (c-1)^2\right)
\otimes
{\mathcal N}\left(0,
Z_{\infty}(1-Z_{\infty})c^2\right)
\ \ \ \hbox{stably}.
$$
In order to conclude, it is enough to observe that
$$
\sqrt{n}
\left(Z_n-Z_{\infty}, N_n-Z_{\infty}\right)
=\Phi\left(
\sqrt{n}(N_n-Z_n), \sqrt{n}(Z_n-Z_{\infty})
\right)\,,
$$
where $\Phi(x,y)=(y, x+y)$.
\qed

\begin{rem}\label{rem:different_approach_gamma_1}
\rm Looking at the arguments of the proof of
Theorem~\ref{thm:asymptotics_theta_hat_gamma} with $N\geq 2$ and
$\gamma=1$, we find
$E\left[\,|\mathbf{Q}_n|\,\right]=O\left(\frac{1}{n^{(1-\gamma)/2}}\right)=O(1)$
and so, from this relation, we can not conclude that $\mathbf{Q}_n$
converges to zero in probability. Therefore part of the proof of
Theorem~\ref{thm:asymptotics_theta_hat_gamma} does not work when
$N\geq 2$ and $\gamma=1$. Moreover, since $\mathbf{Q}_n=\sum_{k=1}^n
\widehat{\mathbf{Z}}_{k-1}/\sqrt{n}$ and, from \cite[Theorem
  4.3]{ale-cri-ghi}, we know that, when $N\geq 2$ and $\gamma=1$, the
rate of convergence of $\widehat{\mathbf{Z}}_n$ is $\sqrt{n}$ or
$\sqrt{n/\ln(n)}$ according to the value of ${\mathcal
  Re}(\lambda^*)$, we may conjecture that, for $N\geq 2$ and
$\gamma=1$, $\mathbf{Q}_{n}$ generally does not converge in
probability to zero.  This fact leads us to a complete different
approach to the proofs of Theorem~\ref{thm:asymptotics_theta_1} and
Theorem~\ref{thm:asymptotics_Z_1_star} concerning the case $N\geq 2$
and $\gamma=1$, that will be developed in the next sections.
\end{rem}

\subsection{Proof of Theorem \ref{thm:asymptotics_theta_1}
(CLT for $({\mathbf Z}_n,{\mathbf N}_n)_n$ in the case $N\geq 2$, $\gamma=1$
and ${\mathcal Re}(\lambda^{*})<1-(2c)^{-1}$)}

In order to prove Theorem \ref{thm:asymptotics_theta_1}, we need the
following convergence result on $(\widehat{\mathbf{Z}}_n,
\widehat{\mathbf{N}}_n)_n$:

\begin{theo}\label{thm:asymptotics_theta_hat_1}
Let $N\geq 2$, $\gamma=1$ and ${\mathcal R}e(\lambda^{*})<1-(2c)^{-1}$.
Then, under condition~\eqref{ass:condition_r_n_1}, we have that
\begin{equation*}
\sqrt{n}
\begin{pmatrix}
\widehat{\mathbf{Z}}_n\\
\widehat{\mathbf{N}}_n
\end{pmatrix}
{\longrightarrow}\
\mathcal{N} \left(\ \mathbf{0}\ ,\ Z_{\infty}(1-Z_{\infty})
\begin{pmatrix}
\widehat{\Sigma}_{\mathbf{ZZ}} & \widehat{\Sigma}_{\mathbf{ZN}}\\
\widehat{\Sigma}_{\mathbf{ZN}}^{\top} & \widehat{\Sigma}_{\mathbf{NN}}
\end{pmatrix}\ \right)\ \ \ \ stably,
\end{equation*}
where $\widehat{\Sigma}_{\mathbf{ZZ}}$, $\widehat{\Sigma}_{\mathbf{NN}}$ and
$\widehat{\Sigma}_{\mathbf{ZN}}$ are the matrices defined
in~\eqref{def:Sigmahat_ZZ_1},~\eqref{def:Sigmahat_NN_1}
and~\eqref{def:Sigmahat_ZN_1}, respectively.
\end{theo}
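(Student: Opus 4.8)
The plan is to treat $(\widehat{\mathbf Z}_n,\widehat{\mathbf N}_n)_n$ as a \emph{single coupled linear stochastic approximation scheme} with step-size of order $1/n$. The point that makes this possible is that for $\gamma=1$ the two ``discount factors'' $r_n$ and $1/(n+1)$ are both asymptotically equivalent to $c/(n+1)$ and $1/(n+1)$, hence genuinely comparable (unlike in the case $1/2<\gamma<1$, where $\widehat{\mathbf Z}_n$ decays strictly faster than $\widehat{\mathbf N}_n$ and the two can be decoupled). Combining \eqref{eq:dynamic_SA_hat}, \eqref{rel-tecnica-i}, \eqref{eq:dynamic_SA_N_hat} and \eqref{eq:dynamic_SA_tilde}, and using condition~\eqref{ass:condition_r_n_1} to replace $r_n$ by $c/(n+1)$ up to an $O(n^{-2})$ error, I would first write, for $\mathbf Y_n:=(\widehat{\mathbf Z}_n,\widehat{\mathbf N}_n)$,
\begin{equation*}
\mathbf Y_{n+1}=\Bigl(I-\tfrac{1}{n+1}H\Bigr)\mathbf Y_n+\tfrac{1}{n+1}\,\Delta\mathbf L_{n+1}+\tfrac{1}{n+1}\,\mathbf R_{n+1},\qquad
H=\begin{pmatrix} c\,U(I-D)V^{\top} & 0\\ -\,UDV^{\top} & I\end{pmatrix},
\end{equation*}
where $\Delta\mathbf L_{n+1}=\bigl(c\,UV^{\top}\Delta\mathbf M_{n+1},\,(I-c\,\mathbf u_1\mathbf v_1^{\top})\Delta\mathbf M_{n+1}\bigr)$ is an $\mathcal F$-martingale increment (the $-\,(\widetilde Z_{n+1}-\widetilde Z_n)\mathbf 1$ contribution being absorbed using $\mathbf u_1=N^{-1/2}\mathbf 1$), and $\mathbf R_{n+1}$ is a remainder with $\|\mathbf R_{n+1}\|=O(n^{-1})\|\mathbf Y_n\|+O(n^{-1})\|\Delta\mathbf M_{n+1}\|$ coming entirely from the discrepancy $r_n-c/(n+1)$. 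Here the stronger assumption~\eqref{ass:condition_r_n_1} is essential: with only~\eqref{ass:condition_r_n_gamma} the accumulated remainder would fail to be negligible after the $\sqrt n$-scaling (cf.\ Remark~\ref{rem:different_approach_gamma_1}).

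The second step is the $L^2$-bound $\sup_n n\,E\|\mathbf Y_n\|^2<\infty$, which is where the hypothesis $\mathcal Re(\lambda^{*})<1-(2c)^{-1}$ enters. Restricted to the subspaces actually visited by $\widehat{\mathbf Z}_n$ and $\widehat{\mathbf N}_n$, the eigenvalues of $H$ are $c(1-\lambda_j)$ for $j=2,\dots,N$ (whose real parts are $\ge c(1-\mathcal Re(\lambda^{*}))>\tfrac12$ precisely under that hypothesis) together with the eigenvalue $1$ coming from the $\widehat{\mathbf N}$-block; thus every relevant eigenvalue of $H$ has real part strictly greater than $1/2$, which is exactly the classical spectral condition ensuring that the martingale noise of size $n^{-1/2}$ dominates the drift. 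For the $\widehat{\mathbf Z}$-block the estimate $E\|\widehat{\mathbf Z}_n\|^2=O(n^{-1})$ is already available from \cite[Theorem 4.3]{ale-cri-ghi}; the new contribution is to propagate it through the coupling term $-UDV^{\top}\widehat{\mathbf Z}_n$ into $\widehat{\mathbf N}_n$, which I would do by a Gronwall/induction argument on $E\|\mathbf Y_n\|^2$ directly from the displayed recursion, using the $\mathbf R_{n+1}$ bound above.

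The third step is the stable CLT. Unrolling the recursion by discrete variation of constants gives $\sqrt n\,\mathbf Y_n=\sum_{k=1}^{n}\mathbf A_{n,k}\,\Delta\mathbf L_k+(\text{negligible terms})$, where $\mathbf A_{n,k}=\sqrt n\,\bigl(\prod_{j=k+1}^{n}(I-\tfrac1j H)\bigr)k^{-1}$ behaves like a $(k/n)^{H-\frac12 I}k^{-1/2}n^{-1/2}$-type kernel; the omitted terms are the deterministic $\bigl(\prod_{j\le n}(I-\tfrac1j H)\bigr)\mathbf Y_0$ (vanishing because $\mathcal Re$ of the eigenvalues of $H$ exceeds $1/2$) and $\sum_k\mathbf A_{n,k}\mathbf R_k$ (controlled via Step~2 and~\eqref{ass:condition_r_n_1}). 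Then $(\mathbf A_{n,k}\Delta\mathbf L_k)_{1\le k\le n}$ is a martingale difference array and I would invoke Theorem~\ref{thm:triangular}: condition (c3) follows from $|\Delta\mathbf L_k|$ bounded and $\sup_{k\le n}\|\mathbf A_{n,k}\|\to 0$, while for (c2) one uses~\eqref{eq:multidim-limite-conditional-M},~\eqref{eq:covar-conditional-M} and~\eqref{eq:var-conditional-M} to obtain $E[\Delta\mathbf L_k\Delta\mathbf L_k^{\top}\mid\mathcal F_{k-1}]\to Z_{\infty}(1-Z_{\infty})\Gamma_0$ with $\Gamma_0$ an explicit deterministic positive semidefinite matrix, whence
\begin{equation*}
\sum_{k=1}^{n}\mathbf A_{n,k}\,E[\Delta\mathbf L_k\Delta\mathbf L_k^{\top}\mid\mathcal F_{k-1}]\,\mathbf A_{n,k}^{\top}\ \stackrel{a.s.}{\longrightarrow}\ Z_{\infty}(1-Z_{\infty})\,\Sigma_0,
\end{equation*}
where $\Sigma_0$ is the unique solution of the Lyapunov equation $\bigl(H-\tfrac12 I\bigr)\Sigma_0+\Sigma_0\bigl(H-\tfrac12 I\bigr)^{\top}=\Gamma_0$. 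Since $H$ and $\Gamma_0$ are deterministic, $\Sigma_0$ is deterministic and the asymptotic covariance is the random scalar $Z_{\infty}(1-Z_{\infty})$ times a fixed matrix — which is exactly what upgrades convergence in distribution to \emph{stable} convergence.

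Finally I would identify $\Sigma_0$ with the block matrix in the statement by solving the Lyapunov equation componentwise in the eigenbasis of $W$: in those coordinates the $(h,j)$-entry equation reads $(\mu_h+\mu_j-1)[\Sigma_0]_{h,j}=[\Gamma_0]_{h,j}$ with $\mu_h\in\{c(1-\lambda_h):2\le h\le N\}\cup\{1\}$, which reproduces exactly the denominators $c(2-\lambda_h-\lambda_j)-1$, $c(1-\lambda_h)$ and $1$ appearing in~\eqref{def:Shat_ZZ},~\eqref{def:Shat_NN_1},~\eqref{def:Shat_NN_2},~\eqref{def:Shat_ZN_1} and~\eqref{def:Shat_ZN_2}, once the blocks of $\Gamma_0$ are computed from the conditional covariances of $\Delta\mathbf L$ (in particular $[\Gamma_0]_{1,1}=(c-1)^2\|\mathbf v_1\|^2$, giving $[\widehat S_{\mathbf{NN}}]_{1,1}=(c-1)^2\|\mathbf v_1\|^2$). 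The main obstacle I expect is Step~2 together with the bookkeeping of $\mathbf R_{n+1}$: obtaining a sharp \emph{coupled} $L^2$-estimate for $(\widehat{\mathbf Z}_n,\widehat{\mathbf N}_n)$ and showing that the remainder is negligible under~\eqref{ass:condition_r_n_1} is the delicate part, whereas the explicit Lyapunov computation of the last step, though lengthy, is routine.
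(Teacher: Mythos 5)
Your strategy is essentially the paper's: write the pair $(\widehat{\mathbf Z}_n,\widehat{\mathbf N}_n)$ as one coupled linear recursion (the paper's $\boldsymbol{\theta}_n$ with matrix $Q=c^{-1}H$ and step $r_n$, the discrepancy with $c/n$ going into a remainder that is $O(n^{-1})$ thanks to \eqref{ass:condition_r_n_1}), unroll it, discard the initial term and the remainder using $c\,{\mathcal Re}(1-\lambda^{*})>1/2$, and apply Theorem~\ref{thm:triangular} to the resulting martingale triangular array. Two remarks on the parts you single out: your Step~2 ($\sup_n nE\|\mathbf Y_n\|^2<\infty$) is not needed, since $\widehat{\mathbf Z}_n$, $\widehat{\mathbf N}_n$ and $\Delta\mathbf M_n$ are uniformly bounded, so the remainder is deterministically $O(n^{-1})$ and its weighted sum is killed by the kernel decay exactly as in the paper's treatment of $\boldsymbol{\rho}_n$ — the step you flag as the main obstacle is the easy one. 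The genuinely different ingredient in your plan is the identification of the covariance via a Lyapunov equation for $H$, instead of the paper's explicit asymptotics of the products $A_{k+1,n}$ in \eqref{def:terms_in_A_k_n} and of the weighted sums in \eqref{eq:results_limits}; that route can work, but as written it has two gaps.

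First, condition (c2) of Theorem~\ref{thm:triangular} requires convergence of $\sum_k(t_n\mathbf T_{n,k})(t_n\mathbf T_{n,k})^{\top}$ itself, not of its compensator $\sum_k\mathbf A_{n,k}E[\Delta\mathbf L_k\Delta\mathbf L_k^{\top}\mid\mathcal F_{k-1}]\mathbf A_{n,k}^{\top}$: passing from conditional covariances to the actual outer products needs a weighted LLN for the array with $n$-dependent weights, which is precisely what the paper's Lemma~\ref{lemma-serie-rv} (fed by the bounds of Lemma~\ref{lemma-tecnico_2} and Remark~\ref{rem:relations_diff_v}) supplies; your sketch skips this. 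Second, the ``componentwise'' Lyapunov solution in the eigenbasis of $W$ is not available as stated: in the coordinates $(V^{\top}\widehat{\mathbf Z}_n,\widetilde V^{\top}\widehat{\mathbf N}_n)$ the matrix $H$ is only block-triangular with coupling block $-D$, so the entries of $\Sigma_0$ satisfy a coupled triangular system, and the numerators $1+(c-1)[(1-\lambda_h)^{-1}+(1-\lambda_j)^{-1}]$ and $c+(c-1)(1-\lambda_h)^{-1}$ in \eqref{def:Shat_NN_2} and \eqref{def:Shat_ZN_2} arise exactly from this coupling (equivalently from the eigenvectors of $H$, which mix the two blocks through factors $(c(1-\lambda_h)-1)^{-1}$). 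Moreover $c(1-\lambda_j)=1$ is allowed under ${\mathcal Re}(\lambda^{*})<1-(2c)^{-1}$, and in that case $H$ is not diagonalizable (a Jordan block ties the eigenvalue $c(1-\lambda_j)=1$ of the $\widehat{\mathbf Z}$-block to the eigenvalue $1$ of the $\widehat{\mathbf N}$-block); the Lyapunov equation remains uniquely solvable, since no two relevant eigenvalues of $H$ sum to $1$, but your relation $(\mu_h+\mu_j-1)[\Sigma_0]_{h,j}=[\Gamma_0]_{h,j}$ must then be replaced by the coupled system — this resonance is what produces the $\ln(n/k)$ terms in $[A^{31}_{k+1,n}]_{jj}$ and the second and third limits in \eqref{eq:results_limits} in the paper's computation. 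With these two repairs your route does yield the stated matrices; note finally that stable convergence comes from Theorem~\ref{thm:triangular} itself via (c1), not merely from the covariance being $Z_\infty(1-Z_\infty)$ times a deterministic matrix.
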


\begin{proof}
First we use \eqref{rel-tecnica-i} in~\eqref{eq:dynamic_SA_N_hat} and
we replace the term $(\widetilde{Z}_n-\widetilde{Z}_{n-1})$
in~\eqref{eq:dynamic_SA_N_hat} as shown
in~\eqref{eq:dynamic_SA_tilde}, so that we obtain
$$\widehat{\mathbf{N}}_n-\widehat{\mathbf{N}}_{n-1}=
\frac{1}{n}(-\widehat{\mathbf{N}}_{n-1}
+UDV^{\top}\widehat{\mathbf{Z}}_{n-1}+\Delta\mathbf{M}_n)\
-r_{n-1}N^{-1/2}\mathbf{v}_1^{\top}\Delta\mathbf{M}_n\mathbf{1}.
$$
Then, if we define the remainder term as
\begin{equation}\label{def:R_n}
\mathbf{R}_n\ :=\
\Big(\frac{1}{nr_{n-1}}-\frac{1}{c}\Big)
(-\widehat{\mathbf{N}}_{n-1}
+UDV^{\top}\widehat{\mathbf{Z}}_{n-1}+\Delta\mathbf{M}_n),
\end{equation}
we can rewrite the above dynamics of $\widehat{\mathbf{N}}_n$ as follows:
\begin{equation}\label{eq:dynamic_SA_N_hat_proof}
\widehat{\mathbf{N}}_n=(1-r_{n-1}c^{-1})\widehat{\mathbf{N}}_{n-1}
+r_{n-1} c^{-1}UDV^{\top}\widehat{\mathbf{Z}}_{n-1}
+r_{n-1}[c^{-1}I-N^{-1/2}\mathbf{1}\mathbf{v}_1^{\top}]\Delta\mathbf{M}_n
+r_{n-1}\mathbf{R}_n.
\end{equation}
Then, setting
$\boldsymbol{\theta}_n:=
(\widehat{\mathbf{Z}}_{n},\widehat{\mathbf{N}}_{n})^{\top}$,
$\Delta\mathbf{M}_{\theta,n}:=(\Delta\mathbf{M}_n,\Delta\mathbf{M}_n)^{\top}$
and $\mathbf{R}_{\theta,n}:=(\mathbf{0},\mathbf{R}_n)^{\top}$, which
are vectors of dimension $2N$, and combining~\eqref{eq:dynamic_SA_hat}
and~\eqref{eq:dynamic_SA_N_hat_proof}, we can write
$$
\boldsymbol{\theta}_{n+1}=
(I-r_nQ)\boldsymbol{\theta}_{n}+
r_n(R\Delta\mathbf{M}_{\theta,n+1}+\mathbf{R}_{\theta,n+1}),
$$
where
\begin{equation*}
Q:=\begin{pmatrix}
U(I-D)V^{\top} & 0 \\
-c^{-1}UDV^{\top} & c^{-1}I
\end{pmatrix},
\end{equation*}
and (recalling~that $\mathbf{u}_1=N^{-1/2}\mathbf{1}$ and
$I=\mathbf{u}_1\mathbf{v}_1^{\top}+UV^{\top}$ by
\eqref{eq:relazioni-1} and~\eqref{eq:relazioni-2})
\begin{equation}\label{def:R-irene}
R:=\begin{pmatrix}
UV^{\top} & 0\\
0 & (c^{-1}-1)\mathbf{u}_1\mathbf{v}_1^{\top}+c^{-1}UV^{\top}
\end{pmatrix}.
\end{equation}
Now, we will prove that $\sqrt{n}\boldsymbol{\theta}_{n}$ converges stably
to the desired Gaussian kernel.  To this end, the first step is to define
the $(2N)\times(2N-1)$ matrices
\[U_{\theta}:=\begin{pmatrix}
U & 0 \\
0 & \widetilde{U}
\end{pmatrix}=\begin{pmatrix}
U & \mathbf{0} & 0 \\
0 & \mathbf{u}_1 & U
\end{pmatrix}
\qquad\mbox{and}\qquad
V_{\theta}:=\begin{pmatrix}
V & 0  \\
0 & \widetilde{V}
\end{pmatrix}=\begin{pmatrix}
V & \mathbf{0} & 0 \\
0 & \mathbf{v}_1 & V
\end{pmatrix},\]
and observe that from~\eqref{eq:relazioni-2} we have
$V_{\theta}^{\top}U_{\theta}=I$ and
\begin{equation*}
U_{\theta}V_{\theta}^{\top}=
\begin{pmatrix}
UV^{\top} & 0  \\
0 & I
\end{pmatrix}.
\end{equation*}
Then, defining the $(2N)\times(2N-1)$ matrices
\begin{equation}\label{def:S_Q_S_R}
S_Q:=\begin{pmatrix}
(I-D) & \mathbf{0} & 0\\
\mathbf{0}^{\top} & c^{-1} & \mathbf{0}^{\top}\\
-c^{-1}D & \mathbf{0} & Ic^{-1}
\end{pmatrix}
\qquad\hbox{and}\qquad
S_R:=\begin{pmatrix}
I & \mathbf{0} & 0\\
\mathbf{0}^{\top} & c^{-1}-1 & \mathbf{0}^{\top}\\
0 & \mathbf{0} & c^{-1}I
\end{pmatrix},
\end{equation}
we have that $Q=U_{\theta}S_QV_{\theta}^{\top}$
and $R=U_{\theta}S_R V_{\theta}^{\top}$.
From the above relations on $U_{\theta}$ and $V_{\theta}$,
we get that
$U_{\theta}V_{\theta}^{\top}\boldsymbol{\theta}_n=\boldsymbol{\theta}_n$ and hence
we can write
$$
\boldsymbol{\theta}_{n+1}=
U_{\theta}\left[I-r_nS_Q\right]V_{\theta}^{\top}\boldsymbol{\theta}_n
+r_nR\Delta\mathbf{M}_{\theta, n+1}+r_n\mathbf{R}_{\theta,n+1}.
$$ Let us now set $\alpha_j:=1-\lambda_j\in\mathbb{C}$ with
$\lambda_j\in Sp(W)\setminus \{1\}=Sp(D)$ and recall that ${\mathcal
  Re}(\alpha_j)>0$ for each $j$ since ${\mathcal Re}(\lambda_j)<1$ for
each $j$.  Then, if we take an integer $m_0\geq 2$ large enough such
that ${\mathcal Re}(\alpha_j) r_n<1$ for all $j$ and $n\geq m_0$, we
can write
\begin{equation}\label{eq:theta-irene}
\boldsymbol{\theta}_{n+1}=
C_{m_0,n}\boldsymbol{\theta}_{m_0}+
\sum_{k=m_0}^nC_{k+1,n}r_kR\Delta\mathbf{M}_{\theta,k+1}+
\sum_{k=m_0}^nC_{k+1,n}r_k\mathbf{R}_{\theta,k+1}
\quad\hbox{for } n\geq m_0,
\end{equation}
where
\begin{equation}\label{def:C_k_n_A_k_n}
\begin{split}
&C_{k+1,n}:=U_{\theta}A_{k+1,n}V_{\theta}^{\top}
\quad\hbox{for } m_0-1\leq k\leq n,
\\
&A_{k+1,n}:=
\begin{cases}
\prod_{m=k+1}^n\left[I-r_mS_Q\right]=\begin{pmatrix}
A^{11}_{k+1,n} & \mathbf{0} & 0\\
\mathbf{0}^{\top} & a^{22}_{k+1,n} & \mathbf{0}^{\top}\\
A^{31}_{k+1,n} & \mathbf{0} & A^{33}_{k+1,n}
\end{pmatrix}
\;&\hbox{for } m_0\!-\!1\!\leq\! k\!\leq\! n\!-\!1
\\
I\;&\hbox{for } k\!=\!n.
\end{cases}
\end{split}
\end{equation}
Notice that the blocks $A^{11}_{k+1,n}$, $A^{31}_{k+1,n}$ and
$A^{33}_{k+1,n}$ are all diagonal $(N-1)\times(N-1)$ matrices.  In
particular, setting for any $x\in\mathbb{C}$, $p_{m_0-1}(x):=1$ and
$p_{k}(x):=\prod_{m=m_0}^{k}(1-r_mx)$ for $k\geq m_0$ and
$F_{k+1,n}(x):=p_{n}(x)/p_{k}(x)$ for $m_0-1\leq k\leq n-1$, from
Lemma~\ref{lem:terms_in_A_k_n} we get
\begin{equation}\label{def:terms_in_A_k_n}\begin{aligned}
&[A^{11}_{k+1,n}]_{jj}\ =\ F_{k+1,n}(\alpha_j),\\
&[A^{33}_{k+1,n}]_{jj}\ =\ a^{22}_{k+1,n}\ =\ F_{k+1,n}(c^{-1}),\\
&[A^{31}_{k+1,n}]_{jj}\ =\ \left\{\begin{aligned}
&\left(\frac{1-\alpha_j}{c\alpha_j-1}\right)
(F_{k+1,n}(c^{-1})-F_{k+1,n}(\alpha_j)),\ &\hbox{for } c\alpha_j\neq 1,\\
&(1-c^{-1})F_{k+1,n}(c^{-1})\ln\left(\frac{n}{k}\right)+O(n^{-1}),\
&\hbox{for } c\alpha_j=1.
\end{aligned}\right.
\end{aligned}\end{equation}

Finally, we rewrite \eqref{eq:theta-irene} as
\begin{equation}\label{eq:dynamic_SA_theta_proof}
\boldsymbol{\theta}_{n+1}=C_{m_0,n}\boldsymbol{\theta}_{m_0}+
\sum_{k=m_0}^n {\mathbf T}_{n,k}+\boldsymbol{\rho}_{n},\qquad\mbox{where}
\qquad \left\{
\begin{aligned}
&{\mathbf T}_{n,k}:=r_k C_{k+1,n} R\Delta{\mathbf M}_{\theta,k+1},\\
&\boldsymbol{\rho}_{n}:=\sum_{k=m_0}^nr_k C_{k+1,n} {\mathbf R}_{\theta,k+1}.
\end{aligned}\right.\end{equation}
and, in the sequel of the proof, we will establish the asymptotic
behavior of $\boldsymbol{\theta}_{n}$ by studying separately the terms
$C_{m_0,n}\boldsymbol{\theta}_{m_0}$, $\sum_{k=m_0}^n {\mathbf
  T}_{n,k}$ and $\boldsymbol{\rho}_{n}$.\\

\indent Let us use the symbol $^*$ for the quantities
$a_{\alpha_j}:={\mathcal Re}(\alpha_j)$ and $p_{n}(\alpha_j)$
corresponding to $\alpha^*=\alpha_j=1-\lambda_j$ with
$\lambda_j=\lambda^*\in \lambda_{\max}(D)$. Now, we note that, as a
consequence of \eqref{def:C_k_n_A_k_n}, \eqref{def:terms_in_A_k_n}
and Lemma~\ref{lemma-tecnico_1}, we have
\begin{equation}\label{eq-corretta}
\begin{split}
|A_{k+1,n}|&=O\left(\frac{|p_n^*|}{|p_k^*|}\right)
+O\left(\frac{k}{n}\right)+O\left(\frac{k}{n}\ln\left(\frac{n}{k}\right)\right)
+O\left(\frac{1}{n}\right)\\
&\hbox{(where the last two terms are present when
there exists $j$ such that $c\alpha_j=1$)}
\\
&=O\left(\frac{|p_n^*|}{|p_k^*|}\right)
+O\left(\frac{k}{n}\ln(n)\right)
=
O\left( \Big(\frac{k}{n}\Big)^{ca^*} \right)
+O\left(\frac{k}{n}\ln(n)\right)
\quad\hbox{for } m_0-1\leq k\leq n-1.
\end{split}
\end{equation}
Therefore, we get that $\sqrt{n}
|C_{m_0,n}\boldsymbol{\theta}_{m_0}|\rightarrow 0$ almost surely 
%
%
 because $ca^*>1/2$ by assumption.\\

\indent Concerning the term $\boldsymbol{\rho}_{n}$, notice that
by~\eqref{ass:condition_r_n_1} and~\eqref{def:R_n} we have that
$|\mathbf{R}_k|=O(k^{-1})$ and, by \eqref{eq-corretta}, we have that
\begin{equation*}
|C_{k+1,n}|\ =\  O\Big(\ \Big(\frac{k}{n}\Big)^{ca^*}\ \Big)
+O\Big(\ \frac{k}{n}\ln(n)\ \Big)
\quad\hbox{for } m_0\leq k\leq n-1.
\end{equation*}
Therefore, since $\boldsymbol{\rho}_{n}=\sum_{k=m_0}^nr_k C_{k+1,n}
{\mathbf R}_{\theta,k+1}=\sum_{k=m_0}^{n-1} r_k C_{k+1,n}{\mathbf
  R}_{\theta,k+1} +r_nC_{n+1,n}{\mathbf R}_{\theta,n+1}$, it follows (by
\eqref{relazione-nota}) that
\begin{equation*}
\sqrt{n}|\boldsymbol{\rho}_{n}|\ =\
O\Big( n^{1/2-c a^*}
\sum_{k=m_0}^{n-1} k^{-(2-c a^*)}\Big)
+
O\big( n^{-1/2} \ln(n)^2 \big)
+
O\big(n^{-3/2}\big)
\longrightarrow 0\quad \hbox{a.s.}\,,
\end{equation*}
because $ca^*>1/2$.
\\

\indent We now focus on the asymptotic behavior of the second term.
Specifically, we aim at pro\-ving that $\sqrt{n}\sum_{k=m_0}^n{\mathbf
  T}_{n,k}$ converges stably to the suitable Gaussian kernel.  For this
purpose, we set ${\mathcal G}_{n,k}={\mathcal F}_{k+1}$, and consider
Theorem~\ref{thm:triangular} (recall that ${\mathbf T}_{n,k}$ are real
random vectors).  Given the fact that condition $(c1)$ of
Theorem~\ref{thm:triangular} is obviously satisfied, we will check
only conditions $(c2)$ and $(c3)$.  \\

Regarding condition $(c2)$, since the relation
$V_{\theta}^{\top}U_{\theta}=I$ implies $V_{\theta}^{\top}R=S_R
V_{\theta}^{\top}$, we have that
\begin{align*}
\sum_{k=m_0}^n (\sqrt{n}\mathbf{T}_{n,k})(\sqrt{n}\mathbf{T}_{n,k})^{\top}
& = n\sum_{k=m_0}^n r_k^2
C_{k+1,n}R
(\Delta{\mathbf M}_{\theta,k+1})(\Delta{\mathbf M}_{\theta,k+1})^{\top}RC_{k+1,n}^{\top}
\\
&= U_{\theta} \left(n\sum_{k=m_0}^n r_k^2
A_{k+1,n}\,V_{\theta}^{\top}\,R
(\Delta{\mathbf M}_{\theta,k+1})(\Delta{\mathbf M}_{\theta,k+1})^{\top}R\,
V_{\theta}\,A^{\top}_{k+1,n}
\right) U_{\theta}^{\top}\\
&= U_{\theta} \left(n\sum_{k=m_0}^n r_k^2
A_{k+1,n}S_R\,V_{\theta}^{\top}\,(\Delta{\mathbf M}_{\theta,k+1})
(\Delta{\mathbf M}_{\theta,k+1})^{\top}\,
V_{\theta}\, S_RA^{\top}_{k+1,n}
\right) U_{\theta}^{\top}.
\end{align*}
Therefore, it is enough to study the convergence of
\begin{equation*}
n\sum_{k=m_0}^n r_k^2
A_{k+1,n}S_R\,V_{\theta}^{\top}\,
(\Delta{\mathbf M}_{\theta,k+1})(\Delta{\mathbf M}_{\theta,k+1})^{\top}\,
V_{\theta}\, S_RA^{\top}_{k+1,n}.
\end{equation*}
Moreover, since $O(nr_n^2)=O(n^{-1})\to 0$ the last term in the above sum is negligible as $n$ increase to infinity,
and hence it is enough to study the convergence of
\begin{equation}\label{step_proof-irene}
n\sum_{k=m_0}^{n-1} r_k^2
A_{k+1,n}S_R\,V_{\theta}^{\top}\,
(\Delta{\mathbf M}_{\theta,k+1})(\Delta{\mathbf M}_{\theta,k+1})^{\top}\,
V_{\theta}\, S_RA^{\top}_{k+1,n}.
\end{equation}
To this purpose, setting
$B_{\theta,k+1}:=V_{\theta}^{\top}\,(\Delta{\mathbf
    M}_{\theta,k+1})(\Delta{\mathbf
    M}_{\theta,k+1})^{\top}\,V_{\theta}$,
$B_{k+1}:=V^{\top}\,(\Delta{\mathbf M}_{k+1})(\Delta{\mathbf
    M}_{k+1})^{\top}\,V$,
$\mathbf{b}_{k+1}:=V^{\top}\,(\Delta{\mathbf M}_{k+1})(\Delta{\mathbf
    M}_{k+1})^{\top}\,\mathbf{v}_1$ and
$b_{k+1}:=\mathbf{v}_1^{\top}\,(\Delta{\mathbf
    M}_{k+1})(\Delta{\mathbf M}_{k+1})^{\top}\,\mathbf{v}_1$, we
observe that
\begin{equation}\label{def:B_matrix}
B_{\theta,k+1}=\begin{pmatrix}
B_{k+1} & \mathbf{b}_{k+1} & B_{k+1}\\
\mathbf{b}_{k+1}^{\top} & b_{k+1} & \mathbf{b}_{k+1}^{\top}\\
B_{k+1} & \mathbf{b}_{k+1} & B_{k+1}
\end{pmatrix}.
\end{equation}
Since in $B_{\theta,k+1}$ the first and the third row and column of
blocks are the same, in~\eqref{step_proof-irene} the
$(2N-1)\times(2N-1)$ matrix $(A_{k+1,n}S_R)$ can be rewritten as a
diagonal matrix with the following diagonal blocks:
$A^{1}_{k+1,n}:=A^{11}_{k+1,n}$,
$A^{3}_{k+1,n}:=(A^{31}_{k+1,n}+c^{-1}A^{33}_{k+1,n})$ and
$a^{2}_{k+1,n}:=(c^{-1}-1)a^{22}_{k+1,n}$.  Hence, the expression
in~\eqref{step_proof-irene} can be rewritten as
\begin{equation}\label{eq:matrix}
n\sum_{k=m_0}^{n-1} r_k^2
\begin{pmatrix}
A^{1}_{k+1,n}B_{k+1}A^{1}_{k+1,n} & a^{2}_{k+1,n}A^{1}_{k+1,n}\mathbf{b}_{k+1}
& A^{1}_{k+1,n}B_{k+1}A^{3}_{k+1,n}\\
a^{2}_{k+1,n}\mathbf{b}_{k+1}^{\top}A^{1}_{k+1,n} & (a^{2}_{k+1,n})^2b_{k+1}
& a^{2}_{k+1,n}\mathbf{b}_{k+1}^{\top}A^{3}_{k+1,n}\\
A^{3}_{k+1,n}B_{k+1}A^{1}_{k+1,n} & a^{2}_{k+1,n}A^{3}_{k+1,n}\mathbf{b}_{k+1}
& A^{3}_{k+1,n}B_{k+1}A^{3}_{k+1,n}
\end{pmatrix}.\end{equation}
The elements of $A^{1}_{k+1,n}$, $a^{2}_{k+1,n}$ and $A^{3}_{k+1,n}$ in the above matrix
can be rewritten in terms of $F_{k+1,n}(\cdot)$, by~\eqref{def:terms_in_A_k_n}, in the following way:
\begin{equation}\label{def:terms_in_A_k_n_B_k}
\begin{aligned}
&[A^{1}_{k+1,n}]_{jj}=F_{k+1,n}(\alpha_j),\\
&a^{2}_{k+1,n}=(c^{-1}-1)F_{k+1,n}(c^{-1}),\\
&[A^{3}_{k+1,n}]_{jj}=
\left\{\begin{aligned}
&\frac{1}{c\alpha_j-1}
\left[(1-c^{-1})F_{k+1,n}(c^{-1})-(1-\alpha_j)F_{k+1,n}(\alpha_j)\right],\
&\hbox{for } c\alpha_j\neq 1,\\
&\left[(1-c^{-1})\ln\left(\frac{n}{k}\right)+c^{-1}\right]F_{k+1,n}(c^{-1})
+O(n^{-1}),\
&\hbox{for } c\alpha_j=1.
\end{aligned}
\right.
\end{aligned}
\end{equation}
Hence, the almost sure convergences
of all the elements in~\eqref{eq:matrix} can be obtained by combining the
results of the following limits:
\begin{equation}\label{eq:results_limits}
\begin{aligned}
&n\sum_{k=m_0}^{n-1} r_k^2 \beta_{k+1}
F_{k+1,n}(x)F_{k+1,n}(y)\
\stackrel{a.s}\longrightarrow\
\beta\frac{c^2}{c(x+y)-1},
\\
&n\sum_{k=m_0}^{n-1} r_k^2 \beta_{k+1}
\ln\left(\frac{n}{k}\right)F_{k+1,n}(x)F_{k+1,n}(y)\
\stackrel{a.s}\longrightarrow\
\beta\frac{c^2}{(c(x+y)-1)^2},
\\
&n\sum_{k=m_0}^{n-1} r_k^2 \beta_{k+1}
\ln^2\left(\frac{n}{k}\right)F_{k+1,n}(x)F_{k+1,n}(y)\
\stackrel{a.s}\longrightarrow\
\beta\frac{2c^2}{(c(x+y)-1)^3},
\end{aligned}
\end{equation}
for certain complex numbers $x,y\in\{\alpha_j,\, 2\leq j\leq N\}$
(remember that, by the assumption ${\mathcal Re}(\lambda^*)<1
-(2c)^{-1}$, we have $c(a_x+a_y)>1$ with $a_x:={\mathcal Re}(x)$ and
$a_y:={\mathcal Re}(y)$), a suitable sequence of random variables
$\beta_{k}\in\{[B_{k}]_{h,j},[\mathbf{b}_{k}]_{j},b_{k};\,
2\leq h,j\leq N\}$ and some random variable $\beta$.  Indeed,
using~Lemma~\ref{lemma-tecnico_1} and relation \eqref{relazione-nota},
we have
\begin{itemize}
\item[(1)] $n\sum_{k=m_0}^{n-1}r_k^2|\beta_{k+1}|O(n^{-2})\ =\
O(n^{-1})\sum_{k=m_0}^{n-1}O(k^{-2})\ \rightarrow 0 $;
\item[(2)] $n\ln(n)\sum_{k=m_0}^{n-1}r_k^2|\beta_{k+1}|O(n^{-1})|F_{k+1,n}(c^{-1})|\ =\
O(n^{-1}\ln(n))\sum_{k=m_0}^{n-1}O(k^{-1})\ \rightarrow 0$;
\item[(3)] $n\sum_{k=m_0}^{n-1}r_k^2|\beta_{k+1}|O(n^{-1})|F_{k+1,n}(y)|\ =\
O(n^{-ca_y})\sum_{k=m_0}^{n-1}O(k^{-(2-ca_y)})\ \rightarrow 0$.
\\
\end{itemize}

\indent In order to prove the convergences
in~\eqref{eq:results_limits}, we will apply Lemma \ref{lemma-serie-rv}
to each of the three limits.  Indeed, each quantity
in~\eqref{eq:results_limits} can be written as $\sum_{k=m_0}^{n-1}
v^{(e)}_{n,k}Y_k/c_k$, where
$$
Y_k=\beta_{k+1},
\quad
c_k=\frac{1}{kr_k^2}
\quad\mbox{and }\quad
v^{(e)}_{n,k}=\left(\frac{n}{k}\right)\ln^e
\left(\frac{n}{k}\right)F_{k+1,n}(x)F_{k+1,n}(y),
\quad\mbox{ for }e\in\{0,1,2\},
$$
satisfy the assumptions of Lemma \ref{lemma-serie-rv}.
More precisely, setting ${\mathcal H}_n={\mathcal F}_{n+1}$ we have
\begin{equation*}
\begin{split}
E[Y_n\,|\,{\mathcal H }_{n-1}] =
E[\beta_{n+1}\, |\, {\mathcal F}_n]
\ \stackrel{a.s}\longrightarrow\ \beta,
\end{split}
\end{equation*}
because, by \eqref{eq:multidim-limite-conditional-M}, we get that
\begin{equation*}
\begin{split}
&E[B_{n+1}\,|\,{\mathcal F }_{n}]\
=\
V^{\top}E[(\Delta{\mathbf M}_{n+1})
(\Delta{\mathbf M}_{n+1})^{\top}\,|\,{\mathcal F}_n] V\
\stackrel{a.s}\longrightarrow\ (V^{\top}V)Z_{\infty}(1-Z_{\infty}),
\\
&E[\mathbf{b}_{n+1}\,|\,{\mathcal F }_{n}]\
=\ V^{\top}E[(\Delta{\mathbf M}_{n+1})
(\Delta{\mathbf M}_{n+1})^{\top}\,|\,{\mathcal F}_n] \mathbf{v}_1\
\stackrel{a.s}\longrightarrow\ (V^{\top}\mathbf{v}_1)Z_{\infty}(1-Z_{\infty}),
\\
&E[b_{n+1}\,|\,{\mathcal F }_{n}]\
=\ \mathbf{v}_1^{\top}E[(\Delta{\mathbf M}_{n+1})
(\Delta{\mathbf M}_{n+1})^{\top}\,|\,{\mathcal F}_n]\mathbf{v}_1
\stackrel{a.s}\longrightarrow\ \|\mathbf{v}_1\|^2Z_{\infty}(1-Z_{\infty}).
\end{split}
\end{equation*}
Moreover, we have
$$
\sum_{k}\frac{ E[\,|Y_k|^2] }{ c_k^2 }=\sum_k E[\,|Y_k|^2]r_k^4 k^2=
\sum_k r_k^4 O(k^{2})=\sum_k O(1/k^{2})<+\infty.
$$
In addition, since
$|v^{(e)}_{n,k}|/c_k= n r_k^2\ln^e(n/k)|F_{k+1,n}(x)F_{k+1,n}(y)|$,
from~\eqref{affermazione3} in Lemma~\ref{lemma-tecnico_2} (with $u=1$)
it follows that $\sum_{k=m_0}^{n-1}\frac{|v^{(e)}_{n,k}|}{c_k}=O(1)$.
Analogously, using again Lemma~\ref{lemma-tecnico_2}, we can prove that
$\sum_{k=m_0}^{n-1}|v^{(e)}_{n,k}-v^{(e)}_{n,k-1}|=O(1)$ since by
Remark~\ref{rem:relations_diff_v} we have
\[
\left\{\begin{aligned}
&|v^{(e)}_{n,k}-v^{(e)}_{n,k-1}|\ =\
O\Big(nr_k^2\frac{|p_n(x)||p_n(y)|}{|p_k(x)||p_k(y)|}\Big),\
&\hbox{for } e=0,\\
&|v^{(e)}_{n,k}-v^{(e)}_{n,k-1}|\ =\
O\Big(nr_k^2(\ln(n/k)+1)\frac{|p_n(x)||p_n(y)|}{|p_k(x)||p_k(y)|}\Big),
\ &\hbox{for } e=1,\\
&|v^{(e)}_{n,k}-v^{(e)}_{n,k-1}|\ =\
O\Big(
nr_k^2(\ln^2(n/k)+\ln(n/k))\frac{|p_n(x)||p_n(y)|}{|p_k(x)||p_k(y)|}
\Big),
\ &\hbox{for } e=2.
\end{aligned}\right.
\]
Hence, condition~\eqref{cond-serie-rv-2} in Lemma \ref{lemma-serie-rv}
is satisfied and so, in order to apply this lemma, it only remains to
prove condition~\eqref{cond-serie-rv-1}.  To this end, we get the
values of $\lim_n\sum_{k=m_0}^n v^{(e)}_{n,k}/c_k$ by
\eqref{affermazione2} in Lemma \ref{lemma-tecnico_2}, and we observe
that $\lim_n v^{(e)}_{n,n}=s\in\{0,1\}$ and, for a fixed $k$,
$\lim_n|v^{(e)}_{n,k}|=0$ since by~Lemma \ref{lemma-tecnico_1} we have
$|p_n(x)p_n(y)|=O(n^{-c(a_x+a_y)})=o((n\ln^e(n))^{-1})$.\\

Now that we have proved the convergences in~\eqref{eq:results_limits},
we can use the relations in~\eqref{def:terms_in_A_k_n_B_k} to compute
the almost sure limits of all the elements in~\eqref{eq:matrix}.  The
results are listed below, while the technical computations are
reported in
Appendix~\ref{subsubsection_appendix_technical_computation_1}.
\begin{itemize}
\item $n\sum_{k=m_0}^{n-1} r_k^2 [A^{1}_{k+1,n}B_{k+1}A^{1}_{k+1,n}]_{h,j}\stackrel{a.s}\longrightarrow
\frac{c^2}{c(\alpha_h+\alpha_j)-1}(\mathbf{v}_h^{\top}\mathbf{v}_j)Z_{\infty}(1-Z_{\infty});$
\item $n\sum_{k=m_0}^{n-1} r_k^2 [A^{3}_{k+1,n}B_{k+1}A^{3}_{k+1,n}]_{h,j}\stackrel{a.s}\longrightarrow
\frac{1+(c-1)(\alpha_h^{-1}+\alpha_j^{-1})}{c(\alpha_h+\alpha_j)-1}(\mathbf{v}_h^{\top}\mathbf{v}_j)Z_{\infty}(1-Z_{\infty})$;
\item $n\sum_{k=m_0}^{n-1} r_k^2 (a^{2}_{k+1,n})^2b_{k+1}\stackrel{a.s}\longrightarrow (c-1)^2\|\mathbf{v}_1\|^2Z_{\infty}(1-Z_{\infty})$;
\item $n\sum_{k=m_0}^{n-1} r_k^2 [A^{1}_{k+1,n}B_{k+1}A^{3}_{k+1,n}]_{h,j}\stackrel{a.s}\longrightarrow
\frac{\alpha_h^{-1}(c-1)+c}{c(\alpha_h+\alpha_j)-1}(\mathbf{v}_h^{\top}\mathbf{v}_j)Z_{\infty}(1-Z_{\infty})$;
\item $n\sum_{k=m_0}^{n-1} r_k^2 a^{2}_{k+1,n}[\mathbf{b}_{k+1}^{\top}A^{1}_{k+1,n}]_{j}\stackrel{a.s}\longrightarrow
\frac{1-c}{\alpha_j}(\mathbf{v}_1^{\top}\mathbf{v}_j)Z_{\infty}(1-Z_{\infty})$;
\item $n\sum_{k=m_0}^{n-1} r_k^2 a^{2}_{k+1,n}[\mathbf{b}_{k+1}^{\top}A^{3}_{k+1,n}]_{j}\stackrel{a.s}\longrightarrow
\frac{1-c}{\alpha_j}(\mathbf{v}_1^{\top}\mathbf{v}_j)Z_{\infty}(1-Z_{\infty})$.
\end{itemize}

Hence, recalling the definitions of the matrices
$\widehat{S}_{\mathbf{ZZ}}, \widehat{S}_{\mathbf{NN}}$ and
$\widehat{S}_{\mathbf{ZN}}$ given in~\eqref{def:Shat_ZZ},
\eqref{def:Shat_NN_1}, \eqref{def:Shat_NN_2}, \eqref{def:Shat_ZN_1}
and \eqref{def:Shat_ZN_2}, we obtain 
\begin{equation*}\begin{aligned}
&Z_\infty(1-Z_\infty)\widehat{S}_{\mathbf{ZZ}}\ =\
a.s.-\lim_{n\rightarrow\infty}n\sum_{k=m_0}^{n-1} r_k^2A^{1}_{k+1,n}B_{k+1}A^{1}_{k+1,n},\\
&Z_\infty(1-Z_\infty)\widehat{S}_{\mathbf{NN}}\ =\
a.s.-\lim_{n\rightarrow\infty}n\sum_{k=m_0}^{n-1} r_k^2
\begin{pmatrix}
(a^{2}_{k+1,n})^2b_{k+1} & a^{2}_{k+1,n}\mathbf{b}_{k+1}^{\top}A^{3}_{k+1,n}\\
a^{2}_{k+1,n}A^{3}_{k+1,n}\mathbf{b}_{k+1} & A^{3}_{k+1,n}B_{k+1}A^{3}_{k+1,n}
\end{pmatrix},\\
&Z_\infty(1-Z_\infty)\widehat{S}_{\mathbf{ZN}}\ =\
a.s.-\lim_{n\rightarrow\infty}n\sum_{k=m_0}^{n-1} r_k^2
\begin{pmatrix}
a^{2}_{k+1,n}A^{1}_{k+1,n}\mathbf{b}_{k+1} & A^{1}_{k+1,n}B_{k+1}A^{3}_{k+1,n}
\end{pmatrix}.
\end{aligned}
\end{equation*}
Therefore, using~\eqref{eq:matrix}, we can state that
$$\sum_{k=m_0}^n (\sqrt{n}\mathbf{T}_{n,k})(\sqrt{n}\mathbf{T}_{n,k})^{\top}\
\stackrel{a.s.}\longrightarrow\ 
Z_\infty(1-Z_\infty)
U_{\theta}\begin{pmatrix}
\widehat{S}_{\mathbf{ZZ}} & \widehat{S}_{\mathbf{ZN}}\\
\widehat{S}_{\mathbf{ZN}}^{\top} & \widehat{S}_{\mathbf{NN}}
\end{pmatrix}U^{\top}_{\theta}\ =\ 
Z_\infty(1-Z_\infty)
\begin{pmatrix}
U\widehat{S}_{\mathbf{ZZ}}U^{\top} & U\widehat{S}_{\mathbf{ZN}}\widetilde{U}^{\top}\\
\widetilde{U}\widehat{S}_{\mathbf{ZN}}^{\top}U^{\top}
& \widetilde{U}\widehat{S}_{\mathbf{NN}}\widetilde{U}^{\top}
\end{pmatrix},$$
where the last matrix coincides with the one in the statement of the
theorem because of~\eqref{def:Sigmahat_ZZ_1},
\eqref{def:Sigmahat_NN_1} and \eqref{def:Sigmahat_ZN_1}.  
\\

\indent Regarding condition $(c_3)$, we observe that, using the
inequalities
$$
|{\mathbf T}_{n,k}|=r_k |C_{k+1,n}R\Delta{\mathbf M}_{\theta,k+1}|\leq
r_k |U||A_{k+1,n}||V^{\top}||R||\Delta{\mathbf M}_{\theta,k+1}| \leq K r_k |A_{k+1,n}|,
$$
with a suitable constant $K$, we find for any $u>1$
\begin{equation*}
\begin{split}
\big(\sup_{m_0\leq k\leq n}|\sqrt{n}\mathbf{T}_{n,k}|\big)^{2u}
&\leq n^{u} \!\sum_{k=m_0}^{n-1} |\mathbf{T}_{n,k}|^{2u} 
+ n^{u} |\mathbf{T}_{n,n}|^{2u}\\
&=
n^{u} O\left(|p_{n}^*|^{2u}\sum_{k=m_0}^{n-1} \frac{r_k^{2u}}{|p_{k}^*|^{2u}}\right)
+ O\Big( \frac{\ln(n)^{2u}}{n^{u-1}} \Big)
+ n^{u} O(r_n^{2u}),
\end{split}
\end{equation*}
where, for the last equality, we have used \eqref{eq-corretta}.  Now,
since $2ca^*>1$, by \eqref{affermazione3} in Lemma
\ref{lemma-tecnico_2} (with $x=y=\alpha^*=1-\lambda^*$, $e=0$ and
$u>1$), we have
\begin{equation*}
|p_{n}^*|^{2u}\sum_{k=m_0}^{n-1} \frac{r_k^{2u}}{|p_{k}^*|^{2u}}\ =
\begin{cases}
O(n^{-2uca^*})\quad &\hbox{for } 2uca^*<2u-1,\\
O(n^{-(2u-1)}\ln(n))\quad &\hbox{for } 2uca^*=2u-1,\\
O(n^{-(2u-1)})\quad &\hbox{for } 2uca^*>2u-1,
\end{cases}
\end{equation*}
\noindent which, in particular, implies $(\sup_{m_0\leq k\leq
  n}|\sqrt{n}\mathbf{T}_{n,k}|)^{2u}\stackrel{L^1}\longrightarrow 0$ for any
$u>1$.  As a consequence of the above convergence to zero, condition
(c3) of Theorem~\ref{thm:triangular} holds true.  \\

Summing up, all the conditions required by
Theorem~\ref{thm:triangular} are satisfied and so we can apply this
theorem and obtain the stable convergence of $\sqrt{n}\sum_{k=m_0}^n
\mathbf{T}_{n,k}$ to the Gaussian kernel with random covariance matrix defined
in Theorem~\ref{thm:asymptotics_theta_hat_1}.
\end{proof}

Now, we are ready to prove Theorem \ref{thm:asymptotics_theta_1}.
\\

{\it Proof of Theorem \ref{thm:asymptotics_theta_1}.} By
Theorem~\ref{thm:asymptotics_Z_tilde}, we have that
\begin{equation*}
\sqrt{n}(\widetilde{Z}_n-Z_\infty)\mathbf{1}
\longrightarrow {\mathcal N}\big(\mathbf{0},
Z_{\infty}(1-Z_{\infty})\widetilde{\Sigma}_\gamma\big)
\ \ \ \hbox{stably in the strong sense}.
\end{equation*}
Thus, from Theorem \ref{thm:asymptotics_theta_hat_1}, applying
Theorem~\ref{blocco}, we obtain that
$$
\sqrt{n}
\left(
\begin{pmatrix}
{\mathbf Z}_n - \widetilde{Z}_n{\mathbf 1}\\
{\mathbf N}_n - \widetilde{Z}_n{\mathbf 1}
\end{pmatrix},
(\widetilde{Z}_n-Z_\infty)\mathbf{1}
\right)
\longrightarrow
\mathcal{N}\left(\mathbf{0},
Z_\infty(1-Z_\infty)
\begin{pmatrix}
\widehat{\Sigma}_{\mathbf{ZZ}} & \widehat{\Sigma}_{\mathbf{ZN}}\\
\widehat{\Sigma}_{\mathbf{ZN}}^{\top} & \widehat{\Sigma}_{\mathbf{NN}}
\end{pmatrix}
\right)
\otimes
{\mathcal N}\left(\mathbf{0},
Z_{\infty}(1-Z_{\infty})\widetilde{\Sigma}_\gamma\right)
$$
stably.  In order to conclude, it is enough to observe that
$$
\sqrt{n }
\begin{pmatrix}
\mathbf{Z}_n-Z_{\infty}\mathbf{1}\\
{\mathbf N}_n-Z_{\infty}\mathbf{1}
\end{pmatrix}
=\Phi
\left(
{\mathbf Z}_n - \widetilde{Z}_n{\mathbf 1},
{\mathbf N}_n - \widetilde{Z}_n{\mathbf 1},
(\widetilde{Z}_n-Z_\infty)\mathbf{1}
\right)\,,
$$
where $\Phi(x,y,z)=(x+z, y+z)^{\top}$.
\qed

\subsection{Proof of Theorem \ref{thm:asymptotics_Z_1_star}
(CLT for $({\mathbf Z}_n,{\mathbf N}_n)_n$ in the case $N\geq 2$, $\gamma=1$
and ${\mathcal Re}(\lambda^{*})=1-(2c)^{-1}$)}

As above, in order to prove Theorem \ref{thm:asymptotics_Z_1_star}, we
need the following convergence result on
$(\widehat{\mathbf{Z}}_n, \widehat{\mathbf{N}}_n)_n$:

\begin{theo}\label{thm:asymptotics_theta_1_star}
Let $N\geq 2$, $\gamma=1$ and ${\mathcal Re}(\lambda^{*})=1-(2c)^{-1}$.
Then, under condition~\eqref{ass:condition_r_n_1}, we have that
\begin{equation*}
\sqrt{\frac{n}{\ln(n)}}
\begin{pmatrix}
\widehat{\mathbf{Z}}_n\\
\widehat{\mathbf{N}}_n
\end{pmatrix}
{\longrightarrow}\
\mathcal{N} \left(\ \mathbf{0}\ ,\ Z_{\infty}(1-Z_{\infty})
\begin{pmatrix}
\widehat{\Sigma}^{*}_{\mathbf{ZZ}} & \widehat{\Sigma}^{*}_{\mathbf{ZN}}\\
\widehat{\Sigma}_{\mathbf{ZN}}^{*\top} & \widehat{\Sigma}^{*}_{\mathbf{NN}}
\end{pmatrix}\
\right)\ \ \ \ stably,
\end{equation*}
where $\widehat{\Sigma}^{*}_{\mathbf{ZZ}}$, $\widehat{\Sigma}^{*}_{\mathbf{NN}}$ and
$\widehat{\Sigma}^{*}_{\mathbf{ZN}}$ are the matrices defined
in~\eqref{def:Sigmahat_ZZ_1_star},~\eqref{def:Sigmahat_NN_1_star}
and~\eqref{def:Sigmahat_ZN_1_star}, respectively.
\end{theo}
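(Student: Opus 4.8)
The plan is to run the argument of the proof of Theorem~\ref{thm:asymptotics_theta_hat_1} almost verbatim, replacing the normalization $\sqrt{n}$ by $\sqrt{n/\ln(n)}$ and re‑examining which contributions remain in the limit; the only new structural input is that here $ca^{\ast}=1/2$, where $a^{\ast}={\mathcal Re}(\alpha^{\ast})={\mathcal Re}(1-\lambda^{\ast})=(2c)^{-1}$, since ${\mathcal Re}(\lambda^{\ast})=1-(2c)^{-1}$. Concretely I would set $\boldsymbol{\theta}_n=(\widehat{\mathbf{Z}}_n,\widehat{\mathbf{N}}_n)^{\top}$, derive the same linear recursion $\boldsymbol{\theta}_{n+1}=(I-r_nQ)\boldsymbol{\theta}_n+r_n(R\Delta\mathbf{M}_{\theta,n+1}+\mathbf{R}_{\theta,n+1})$ with the same $Q$, $R$ and remainder $|\mathbf{R}_k|=O(k^{-1})$ (from~\eqref{ass:condition_r_n_1}), diagonalize through $U_\theta,V_\theta,S_Q,S_R$, and obtain the decomposition~\eqref{eq:dynamic_SA_theta_proof}, $\boldsymbol{\theta}_{n+1}=C_{m_0,n}\boldsymbol{\theta}_{m_0}+\sum_{k=m_0}^n\mathbf{T}_{n,k}+\boldsymbol{\rho}_n$, with $\mathbf{T}_{n,k}=r_kC_{k+1,n}R\Delta\mathbf{M}_{\theta,k+1}$ and the block matrices $A_{k+1,n}$ whose entries are the quantities $F_{k+1,n}(\cdot)$ of~\eqref{def:terms_in_A_k_n}.

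Then I would discard the two non‑martingale pieces at the new rate. By~\eqref{eq-corretta}, $|A_{k+1,n}|=O((k/n)^{ca^{\ast}})+O((k/n)\ln(n))=O((k/n)^{1/2})+O((k/n)\ln(n))$, so $\sqrt{n/\ln(n)}\,|C_{m_0,n}\boldsymbol{\theta}_{m_0}|=O((\ln n)^{-1/2})\to 0$ a.s., and, using $|\mathbf{R}_k|=O(k^{-1})$ together with~\eqref{relazione-nota}, also $\sqrt{n/\ln(n)}\,|\boldsymbol{\rho}_n|\to 0$ a.s. Hence everything reduces to showing that $\sqrt{n/\ln(n)}\sum_{k=m_0}^n\mathbf{T}_{n,k}$ converges stably to the Gaussian kernel with the claimed covariance, which I would obtain from Theorem~\ref{thm:triangular}. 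Condition~(c1) is automatic, and condition~(c3) follows exactly as before: the bound on $(\sup_{m_0\le k\le n}|\sqrt{n/\ln(n)}\,\mathbf{T}_{n,k}|)^{2u}$ via~\eqref{affermazione3} of Lemma~\ref{lemma-tecnico_2} carries over, the extra factor $(\ln n)^{-u}$ only helping the $L^1$‑convergence to zero for $u>1$.

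The heart of the matter, and the step I expect to be the main obstacle, is condition~(c2): identifying the almost sure limit as $n\to\infty$ of $\frac{n}{\ln(n)}\sum_{k=m_0}^{n-1} r_k^2\,A_{k+1,n}S_R\,B_{\theta,k+1}\,S_R A_{k+1,n}^{\top}$, the critical‑regime analogue of~\eqref{eq:results_limits}. Expanding each entry via~\eqref{def:terms_in_A_k_n_B_k}, the computation comes down to the single limit
\[
\frac{n}{\ln(n)}\sum_{k=m_0}^{n-1} r_k^2\,\beta_{k+1}\,\ln^e\!\Big(\tfrac{n}{k}\Big)\,F_{k+1,n}(x)F_{k+1,n}(y)\ \stackrel{a.s.}{\longrightarrow}\ \beta\,c^2\,\ind_{\{e=0,\,c(x+y)=1\}},
\]
for $x,y\in\{\alpha_j:\,2\le j\le N\}\cup\{c^{-1}\}$ and $\beta_{k+1}\in\{[B_{k+1}]_{h,j},[\mathbf{b}_{k+1}]_j,b_{k+1}\}$ with conditional limit $\beta$ (a multiple of $Z_\infty(1-Z_\infty)$). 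The mechanism is that $|F_{k+1,n}(x)F_{k+1,n}(y)|\asymp(k/n)^{c({\mathcal Re}(x)+{\mathcal Re}(y))}$ with $c({\mathcal Re}(x)+{\mathcal Re}(y))\ge 1$ always — because ${\mathcal Re}(\lambda^{\ast})$ is the \emph{second} largest real part in $Sp(W)$, so every $\alpha_j$ satisfies ${\mathcal Re}(\alpha_j)\ge(2c)^{-1}$ — whence $n\sum_k r_k^2\ln^e(n/k)|F_{k+1,n}(x)F_{k+1,n}(y)|=\Theta(\ln^{e+1}n)$ when $c({\mathcal Re}(x)+{\mathcal Re}(y))=1$ and $=O(1)$ when $c({\mathcal Re}(x)+{\mathcal Re}(y))>1$. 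In particular the logarithmic entries of $A^3_{k+1,n}$ occur only for the non‑critical eigenvalues with $c\alpha_j=1$, and such factors are always paired with an $F_{k+1,n}(c^{-1})$ or an $F_{k+1,n}(\alpha_h)$ forcing $c({\mathcal Re}(x)+{\mathcal Re}(y))\ge 3/2$, hence contribute $O(1)$ and vanish after dividing by $\ln n$; the same happens to the $\mathbf{u}_1$‑direction entry $a^{2}_{k+1,n}=(c^{-1}-1)F_{k+1,n}(c^{-1})$, which is why only the $U$‑subspace survives and the limiting covariance of $\widehat{\mathbf{N}}_n$ is $U\widehat{S}^{\ast}_{\mathbf{NN}}U^{\top}$. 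The terms that remain are precisely those with $x=\alpha_h$, $y=\alpha_j$ and $c(\alpha_h+\alpha_j)=c(2-\lambda_h-\lambda_j)=1$; inserting the leading behaviours $[A^{1}_{k+1,n}]_{jj}=F_{k+1,n}(\alpha_j)$ and $[A^{3}_{k+1,n}]_{jj}\sim\frac{1-\alpha_j}{1-c\alpha_j}F_{k+1,n}(\alpha_j)=\frac{\lambda_j}{1-c\alpha_j}F_{k+1,n}(\alpha_j)$ and using the identity $\frac{c^2}{(1-c\alpha_h)(1-c\alpha_j)}=\frac{1}{\alpha_h\alpha_j}$ (valid whenever $c(\alpha_h+\alpha_j)=1$) reproduces exactly the entries of $\widehat{S}^{\ast}_{\mathbf{ZZ}}$, $\widehat{S}^{\ast}_{\mathbf{NN}}$, $\widehat{S}^{\ast}_{\mathbf{ZN}}$ in~\eqref{def:Shat_ZZ_star},~\eqref{def:Shat_NN_star} and~\eqref{def:Shat_ZN_star}, hence the matrix in the statement. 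The genuine work is proving the displayed limit \emph{with the exact constant} $c^2$, not merely the rate $\ln n$: I would do this by the device used in the earlier cases, writing the sum as $\sum_k v^{(e)}_{n,k}Y_k/c_k$ with $Y_k=\beta_{k+1}$, $c_k=1/(kr_k^2)$ and $v^{(e)}_{n,k}=\frac{1}{\ln(n)}(n/k)\ln^e(n/k)F_{k+1,n}(x)F_{k+1,n}(y)$, and checking hypotheses~\eqref{cond-serie-rv-1} and~\eqref{cond-serie-rv-2} of Lemma~\ref{lemma-serie-rv} through the borderline ($c(a_x+a_y)=1$) estimates of Lemma~\ref{lemma-tecnico_1} and~\eqref{affermazione2}--\eqref{affermazione3} of Lemma~\ref{lemma-tecnico_2}. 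With~(c2) and~(c3) in hand, Theorem~\ref{thm:triangular} yields the stable convergence of $\sqrt{n/\ln(n)}\sum_k\mathbf{T}_{n,k}$, and combining this with the negligibility of $C_{m_0,n}\boldsymbol{\theta}_{m_0}$ and $\boldsymbol{\rho}_n$ completes the proof.
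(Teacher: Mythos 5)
Your proposal is correct and follows essentially the same route as the paper's proof, which likewise reruns the scheme of Theorem~\ref{thm:asymptotics_theta_hat_1} with the normalization $\sqrt{n/\ln(n)}$, disposes of $C_{m_0,n}\boldsymbol{\theta}_{m_0}$ and $\boldsymbol{\rho}_n$ using $ca^*=1/2$, and isolates via Lemma~\ref{lemma-serie-rv} exactly the critical products with $c(\alpha_h+\alpha_j)=1$ (which, as in your indicator $c(x+y)=1$ over $\mathbb{C}$, encodes the cancellation of imaginary parts), recovering $\widehat{S}^{*}_{\mathbf{ZZ}}$, $\widehat{S}^{*}_{\mathbf{NN}}$, $\widehat{S}^{*}_{\mathbf{ZN}}$. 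Two cosmetic fixes: state your key limit only for the combinations that actually occur (for $e\ge 1$ with $c(x+y)=1$ the left-hand side in fact diverges, but such pairs never arise since every $\ln(n/k)$ factor is attached to $F_{k+1,n}(c^{-1})$, forcing $c(a_x+a_y)\ge 3/2$, exactly as you argue), and in checking condition (c3) the borderline estimate needed is \eqref{affermazione3_log} (with $2ca^*=1$ and $u>1$) rather than \eqref{affermazione3}.
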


\begin{proof} The proof of Theorem~\ref{thm:asymptotics_theta_1_star} follows
analogous arguments to those used in
Theorem~\ref{thm:asymptotics_theta_hat_1}.  In particular, consider
the joint dynamics of $\boldsymbol{\theta}_n:=
(\widehat{\mathbf{Z}}_{n},\widehat{\mathbf{N}}_{n})^{\top}$ defined
in~\eqref{eq:dynamic_SA_theta_proof} as follows:
$$\boldsymbol{\theta}_{n+1}=
C_{m_0,n}\boldsymbol{\theta}_{m_0}+
\sum_{k=m_0}^n {\mathbf T}_{n,k}+\boldsymbol{\rho}_{n},\qquad\mbox{where}
\qquad \left\{
\begin{aligned}
&{\mathbf T}_{n,k}=r_k C_{k+1,n} R\Delta{\mathbf M}_{\theta,k+1},\\
&\boldsymbol{\rho}_{n}=\sum_{k=m_0}^nr_k C_{k+1,n} {\mathbf R}_{\theta,k+1},
\end{aligned}\right.$$
where $C_{k+1,n}$ is defined
in~\eqref{def:C_k_n_A_k_n}, $R$ is defined in
\eqref{def:R-irene},
$\Delta\mathbf{M}_{\theta,n}=(\Delta\mathbf{M}_n,\Delta\mathbf{M}_n)^{\top}$ and
$\mathbf{R}_{\theta,n}=(\mathbf{0},\mathbf{R}_n)^{\top}$ with
$\mathbf{R}_n$ defined in~\eqref{def:R_n}.  Then, we are going to prove that
$\sqrt{n/\ln(n)}\sum_{k=m_0}^n {\mathbf T}_{n,k}$ converges stably to the desired
Gaussian kernel, while $\sqrt{n/\ln(n)}|C_{m_0,n}\boldsymbol{\theta}_{m_0}|$ and
$\sqrt{n/\ln(n)}|\boldsymbol{\rho}_{n}|$ converge almost surely to zero.
\\

First, note that by \eqref{eq-corretta}, we have that
\begin{equation}\label{eq-corretta-2}
\begin{split}
|A_{k+1,n}|&=O\left(\frac{|p_n^*|}{|p_k^*|}\right)
+O\left(\frac{k}{n}\ln(n)\right)
=O\left( \Big(\frac{k}{n}\Big)^{ca^*} \right)
+O\left(\frac{k}{n}\ln(n)\right)
\\
&=O\left( \Big(\frac{k}{n}\Big)^{1/2} \right)
+O\left(\frac{k}{n}\ln(n)\right)
\quad\hbox{for } m_0-1\leq k\leq n-1,
\end{split}
\end{equation}
where, as before, the symbol $^*$ refers to the quantities
$a_{\alpha_j}:={\mathcal Re}(\alpha_j)$ and $p_n(\alpha_j)$
corresponding to $\alpha^*=\alpha_j=1-\lambda_j$ with
$\lambda_j=\lambda^*\in \lambda_{\max}(D)$, and hence the last passage
follows since $ca^*=1/2$ by assumption. As a consequence, we obtain
\begin{equation*}
|C_{m_0,n}\boldsymbol{\theta}_{m_0}|\ =\
O\left(n^{-1/2}\right) +O\left(\frac{\ln(n)}{n}\right)
\end{equation*}
and so  $\sqrt{n/\ln(n)} |C_{m_0,n}\boldsymbol{\theta}_{m_0}|\rightarrow0$
almost surely.\\

\indent Concerning the term $\boldsymbol{\rho}_{n}$, notice that
by~\eqref{ass:condition_r_n_1} and~\eqref{def:R_n} we have that
$|\mathbf{R}_k|=O(k^{-1})$ and, by \eqref{eq-corretta-2}, we have that
\begin{equation*}
|C_{k+1,n}|\ =\ O\left(\ \left(\frac{k}{n}\right)^{1/2}\ \right)
+O\left(\frac{k}{n}\ln(n)\right)
\quad\hbox{for } m_0\leq k\leq n-1.
\end{equation*}
Therefore, since $\boldsymbol{\rho}_{n}=\sum_{k=m_0}^nr_k C_{k+1,n}
{\mathbf R}_{\theta,k+1}=\sum_{k=m_0}^{n-1} r_k C_{k+1,n} {\mathbf
  R}_{\theta,k+1} + r_nC_{n+1,n}{\mathbf R}_{\theta,n+1}$, it follows
(by \eqref{relazione-nota}) that
\begin{equation*}
\sqrt{n/\ln(n)}|\boldsymbol{\rho}_{n}|\ =\
O\left( 1/\sqrt{\ln(n)}
\sum_{k=m_0}^{n-1} k^{-3/2}
\right)
+
O\left( n^{-1/2} \ln(n)^{3/2} \right)
+
O\left(n^{-3/2} \ln(n)^{-1/2}\right)
\longrightarrow 0\quad \hbox{a.s.}\,.
\end{equation*}

\indent We now focus on the proof of the fact that
$\sqrt{n/\ln(n)}\sum_{k=m_0}^n{\mathbf T}_{n,k}$ converges stably to the 
suitable Gaussian kernel.  For this purpose, we set ${\mathcal
  G}_{n,k}={\mathcal F}_{k+1}$, and consider
Theorem~\ref{thm:triangular}.  Given the fact that condition $(c1)$ of
Theorem~\ref{thm:triangular} is obviously satisfied, we will check
only conditions $(c2)$ and $(c3)$.\\

Regarding condition $(c2)$, from the computations seen in the proof of
Theorem~\ref{thm:asymptotics_theta_hat_1} and using the fact that
$O(nr_n^2/\ln(n))=O(n^{-1}/\ln(n))\to 0$, we have
\[\begin{aligned}
&a.s.-\lim_n\sum_{k=m_0}^n \left(\sqrt{\frac{n}{\ln(n)}}\mathbf{T}_{n,k}\right)
\left(\sqrt{\frac{n}{\ln(n)}}\mathbf{T}_{n,k}\right)^{\top}\
=\\
&U_{\theta}\left(a.s.-\lim_n\frac{n}{\ln(n)}\sum_{k=m_0}^{n-1} r_k^2
A_{k+1,n}S_R\,V_{\theta}^{\top}
\,(\Delta{\mathbf M}_{\theta,k+1})(\Delta{\mathbf M}_{\theta,k+1})^{\top}\,
V_{\theta}\, S_RA^{\top}_{k+1,n}\right)U_{\theta}^{\top}.
\end{aligned}
\]
Then, setting $B_{\theta,k+1}$ as in~\eqref{def:B_matrix},
the limit of the above expression can be obtain by studying the
convergence of the following matrix:
\begin{equation}\label{eq:matrix_star}
\frac{n}{\ln(n)}\sum_{k=m_0}^{n-1} r_k^2
\begin{pmatrix}
A^{1}_{k+1,n}B_{k+1}A^{1}_{k+1,n} & a^{2}_{k+1,n}A^{1}_{k+1,n}\mathbf{b}_{k+1}
& A^{1}_{k+1,n}B_{k+1}A^{3}_{k+1,n}\\
a^{2}_{k+1,n}\mathbf{b}_{k+1}^{\top}A^{1}_{k+1,n} & (a^{2}_{k+1,n})^2b_{k+1}
& a^{2}_{k+1,n}\mathbf{b}_{k+1}^{\top}A^{3}_{k+1,n}\\
A^{3}_{k+1,n}B_{k+1}A^{1}_{k+1,n} & a^{2}_{k+1,n}A^{3}_{k+1,n}\mathbf{b}_{k+1}
& A^{3}_{k+1,n}B_{k+1}A^{3}_{k+1,n}
\end{pmatrix},\end{equation}
where $A^{1}_{k+1,n},a^{2}_{k+1,n},A^{3}_{k+1,n}$ are defined
in~\eqref{def:terms_in_A_k_n_B_k}.  Notice that the almost sure convergences
of all the elements in~\eqref{eq:matrix_star} can be obtained by
combining the results of the following limits:
\begin{equation}\label{eq:results_limit_star}
\begin{split}
&\frac{n}{\ln(n)}
\sum_{k=m_0}^{n-1} r_k^2 \beta_{k+1}
\ln^e\left(\frac{n}{k}\right)F_{k+1,n}(x)F_{k+1,n}(y)\
\stackrel{a.s}\longrightarrow\ 0,
\quad\hbox{with } c(a_x+a_y)>1\,\hbox{and}\,
e=0,1,2,\\
&\frac{n}{\ln(n)}
\sum_{k=m_0}^{n-1} r_k^2 \beta_{k+1}
F_{k+1,n}(x)F_{k+1,n}(y)\
\stackrel{a.s}\longrightarrow\
\left\{\begin{aligned}
&c^2\beta\ &\mbox{ if }c(a_x+a_y)=1\mbox{ and }b_x+b_y=0,\\
&0\ &\mbox{ if }c(a_x+a_y)=1\mbox{ and }b_x+b_y\neq0,
\end{aligned}\right.
\end{split}
\end{equation}
for certain complex numbers $x,y\in\{\alpha_j,\,2\leq j\leq N\}$ with
$a_x:={\mathcal Re}(x)$, $b_x:={\mathcal Im}(x)$, $a_y:={\mathcal Re}(y)$
and $b_y:={\mathcal Im}(y)$ (remember that, by the assumption on
${\mathcal Re}(\lambda^*)$, we can have both cases $c(a_x+a_y)>1$ and
$c(a_x+a_y)=1$), a suitable sequence of random variables
$\beta_{k}\in\{[B_{k}]_{h,j},[\mathbf{b}_{k}]_{j},b_{k};\, 2\leq
h,j\leq N\}$ and some random variable $\beta$.  \\

In order to prove the convergence in \eqref{eq:results_limit_star} for the
case $c(a_x+a_y)>1$, we can use the convergences in~\eqref{eq:results_limits}
established in the proof of Theorem~\ref{thm:asymptotics_theta_hat_1};
while for the case $c(a_x+a_y)=1$ we can apply Lemma
\ref{lemma-serie-rv} since each quantity
in~\eqref{eq:results_limit_star} can be written as $\sum_{k=m_0}^{n-1}
v_{n,k}Y_k/c_k$, where
$$
Y_k=\beta_{k+1},
\quad
c_k=\frac{1}{kr_k^2}
\quad\mbox{and }\quad
v_{n,k}=\frac{1}{\ln(n)}\left(\frac{n}{k}\right)F_{k+1,n}(x)F_{k+1,n}(y)
$$
satisfy the assumptions of Lemma \ref{lemma-serie-rv}.  Indeed,
similarly as in the proof of Theorem~\ref{thm:asymptotics_theta_hat_1},
we have
$$
\sum_{k}\frac{ E[\,|Y_k|^2] }{ c_k^2 }<+\infty,\;
E[[B_{k+1}]_{h,j}|{\mathcal F}_n] \stackrel{a.s}\rightarrow
(\mathbf{v}_h^{\top}\mathbf{v}_j),\;
E[[\mathbf{b}_{k+1}]_{j}|{\mathcal F}_n]
\stackrel{a.s}\rightarrow (\mathbf{v}_j^{\top}\mathbf{v}_1)
\;\hbox{and }
E[b_{k+1}|{\mathcal F}_n] \stackrel{a.s}\rightarrow
\|\mathbf{v}_1\|^2.$$
In addition, since
$|v_{n,k}|/c_k=(n/\ln(n))r_k^2|F_{k+1,n}(x)F_{k+1,n}(y)|$,
from~\eqref{affermazione3_log} in Lemma~\ref{lemma-tecnico_2} (with
$u=1$) it follows that
$\sum_{k=m_0}^{n-1}\frac{|v_{n,k}|}{c_k}=O(1)$.  Moreover, we
have that $\sum_{k=m_0}^{n-1}|v_{n,k}-v_{n,k-1}|=O(1)$ since by
Remark~\ref{rem:relations_diff_v} we have
\[|v_{n,k}-v_{n,k-1}|\ =\ \left\{\begin{aligned}
&O(k^{-1}/\ln(n))\ &\mbox{ if }b_x+b_y \neq 0,\\
&O(k^{-2}/\ln(n))\ &\mbox{ if }b_x+b_y = 0.
\end{aligned}\right.\]
Hence, condition~\eqref{cond-serie-rv-2} of Lemma \ref{lemma-serie-rv}
is satisfied and so, in order to apply this lemma, it only remains to
prove condition~\eqref{cond-serie-rv-1}.  To this end, we get the
value of $\lim_n\sum_{k=m_0}^{n-1} v_{n,k}/c_k$ from
\eqref{affermazione2_log} in Lemma \ref{lemma-tecnico_2}, and we
observe that $\lim_n v_{n,n}=0$ and, for a fixed $k$,
$\lim_n|v_{n,k}|=0$ since by Lemma~\ref{lemma-tecnico_1} we have
$|p_n(x)p_n(y)|=O(n^{-1})$.\\

Now that we have proved the convergences
in~\eqref{eq:results_limit_star}, we can use the relations
in~\eqref{def:terms_in_A_k_n_B_k} to compute the almost sure limits of
all the elements in~\eqref{eq:matrix_star}.  The results are listed
below, while the technical computations are reported in
Appendix~\ref{subsubsection_appendix_technical_computation_1_star}.
\begin{itemize}
\item $\frac{n}{\ln(n)}\sum_{k=m_0}^{n-1} r_k^2[A^{1}_{k+1,n}B_{k+1}A^{1}_{k+1,n}]_{h,j}\stackrel{a.s}\longrightarrow
(\mathbf{v}_h^{\top}\mathbf{v}_j)Z_{\infty}(1-Z_{\infty})c^2\ind_{\{b_{\alpha_h}+b_{\alpha_j}=0\}}$;
\item $\frac{n}{\ln(n)}\sum_{k=m_0}^{n-1} r_k^2 [A^{3}_{k+1,n}B_{k+1}A^{3}_{k+1,n}]_{h,j}\stackrel{a.s}\longrightarrow
(\mathbf{v}_h^{\top}\mathbf{v}_j)Z_{\infty}(1-Z_{\infty})\frac{(\alpha_h-1)(\alpha_j-1)}{\alpha_h\alpha_j}\ind_{\{ b_{\alpha_h}+b_{\alpha_j}=0\}}$;
\item $\frac{n}{\ln(n)}\sum_{k=m_0}^{n-1} r_k^2b_{k+1}(a^{2}_{k+1,n})^2\stackrel{a.s}\longrightarrow0$;
\item $\frac{n}{\ln(n)}\sum_{k=m_0}^{n-1} r_k^2[A^{1}_{k+1,n}B_{k+1}A^{3}_{k+1,n}]_{h,j}\stackrel{a.s}\longrightarrow
(\mathbf{v}_h^{\top}\mathbf{v}_j)Z_{\infty}(1-Z_{\infty})\frac{c(1-\alpha_j)}{\alpha_h}\ind_{\{b_{\alpha_h}+b_{\alpha_j}=0\}}$;
\item $\frac{n}{\ln(n)}\sum_{k=m_0}^{n-1} r_k^2 a^{2}_{k+1,n}[\mathbf{b}_{k+1}^{\top}A^{1}_{k+1,n}]_{j}\stackrel{a.s}\longrightarrow0$;
\item $\frac{n}{\ln(n)}\sum_{k=m_0}^{n-1} r_k^2 a^{2}_{k+1,n}[\mathbf{b}_{k+1}^{\top}A^{3}_{k+1,n}]_{j}\stackrel{a.s}\longrightarrow0$.
\end{itemize}

Hence, recalling the definitions of the matrices
$\widehat{S}^{*}_{\mathbf{ZZ}}, \widehat{S}^{*}_{\mathbf{NN}}$ and
$\widehat{S}^{*}_{\mathbf{ZN}}$ given in~\eqref{def:Shat_ZZ_star},
\eqref{def:Shat_NN_star} and \eqref{def:Shat_ZN_star}, we obtain 
\begin{equation*}\begin{aligned}
&Z_\infty(1-Z_\infty)\widehat{S}^{*}_{\mathbf{ZZ}}\ =\
a.s.-\lim_{n\rightarrow\infty}n\sum_{k=m_0}^{n-1} r_k^2A^{1}_{k+1,n}B_{k+1}A^{1}_{k+1,n},\\
&Z_\infty(1-Z_\infty)\widehat{S}^{*}_{\mathbf{NN}}\ =\
a.s.-\lim_{n\rightarrow\infty}n\sum_{k=m_0}^{n-1} r_k^2A^{3}_{k+1,n}B_{k+1}A^{3}_{k+1,n},\\
&Z_\infty(1-Z_\infty)\widehat{S}^{*}_{\mathbf{ZN}}\ =\
a.s.-\lim_{n\rightarrow\infty}n\sum_{k=m_0}^{n-1} r_k^2A^{1}_{k+1,n}B_{k+1}A^{3}_{k+1,n}.
\end{aligned}
\end{equation*}
Therefore, using~\eqref{eq:matrix_star}, we can state that
\begin{equation*}
\begin{split}
\sum_{k=m_0}^n
\left(\sqrt{\frac{n}{\ln(n)}}\mathbf{T}_{n,k}\right)
\left(\sqrt{\frac{n}{\ln(n)}}\mathbf{T}_{n,k}\right)^{\top}
&\ \stackrel{a.s.}\longrightarrow\
Z_\infty(1-Z_\infty)
U_{\theta}
\begin{pmatrix}
\widehat{S}^{*}_{\mathbf{ZZ}} & \mathbf{0} & \widehat{S}^{*}_{\mathbf{ZN}}\\
\mathbf{0}^{\top} & 0 &  \mathbf{0}^{\top}\\
\widehat{S}_{\mathbf{ZN}}^{*\top} & \mathbf{0} & \widehat{S}^{*}_{\mathbf{NN}}
\end{pmatrix}
U^{\top}_{\theta}
\\
&\ =\
Z_\infty(1-Z_\infty)
\begin{pmatrix}
U\widehat{S}^{*}_{\mathbf{ZZ}}U^{\top} & U\widehat{S}^{*}_{\mathbf{ZN}}U^{\top}\\
U\widehat{S}_{\mathbf{ZN}}^{*\top}U^{\top} & U\widehat{S}^{*}_{\mathbf{NN}}U^{\top}
\end{pmatrix},
\end{split}
\end{equation*}
where the last matrix coincides with the one in the statement of
the theorem because of~\eqref{def:Sigmahat_ZZ_1_star},
\eqref{def:Sigmahat_NN_1_star} and \eqref{def:Sigmahat_ZN_1_star}. 
\\

\indent Regarding condition $(c_3)$, we observe that, using the
inequalities
$$
|{\mathbf T}_{n,k}|=r_k |C_{k+1,n}R\Delta{\mathbf M}_{\theta,k+1}|\leq
r_k |U||A_{k+1,n}||V^{\top}||R||\Delta{\mathbf M}_{\theta,k+1}| \leq
K r_k |A_{k+1,n}|,
$$
with a suitable constant $K$, we find for any $u>1$
\begin{align*}
\left(\sup_{m_0\leq k\leq
    n}\Big|\sqrt{\frac{n}{\ln(n)}}\mathbf{T}_{n,k}\Big|\right)^{2u} &\leq
  \left(\frac{n}{\ln(n)}\right)^{u} \sum_{k=m_0}^{n-1}
  |\mathbf{T}_{n,k}|^{2u} + \left(\frac{n}{\ln(n)}\right)^{u}
  |\mathbf{T}_{n,n}|^{2u}
  \\ &= \left(\frac{n}{\ln(n)}\right)^{u}
  O\left(|p_{n}^*|^{2u} \sum_{k=m_0}^{n-1}
  \frac{r_k^{2u}}{|p_{k}^*|^{2u}}\right)
+
O\left(\frac{\ln(n)^{u}}{n^{u-1}}\right)
+
 \left(\frac{n}{\ln(n)}\right)^{u} O(r_n^{2u}),
\end{align*}
where, for the last equality, we have used \eqref{eq-corretta-2}. Now,
since $2ca^*=1$, by \eqref{affermazione3_log} in Lemma
\ref{lemma-tecnico_2} (with $x=y=\alpha^*=1-\lambda^*$ and $u>1$), we
have
\begin{equation*}
|p_{n}^*|^{2u}\sum_{k=m_0}^{n-1}
\frac{r_k^{2u}}{|p_{k}^*|^{2u}}\ =\ O(n^{-u}),
\end{equation*}
which, in particular, implies $(\sup_{m_0\leq k\leq
  n}|\sqrt{(n/\ln(n))}\mathbf{T}_{n,k}|)^{2u}\stackrel{L^1}\longrightarrow0$
for any $u>1$.  As a consequence of the above convergence to zero,
condition (c3) of Theorem~\ref{thm:triangular} holds true.  \\

Summing up, all the conditions required by
Theorem~\ref{thm:triangular} are satisfied and so we can apply this
theorem and obtain the stable convergence of
$\sqrt{n/\ln(n)}\sum_{k=m_0}^n \mathbf{T}_{n,k}$ to the Gaussian
kernel with random covariance matrix defined in
Theorem~\ref{thm:asymptotics_theta_1_star}.
\end{proof}

Now, we are ready to prove Theorem \ref{thm:asymptotics_Z_1_star}.
\\

{\it Proof of Theorem \ref{thm:asymptotics_Z_1_star}.} By
Theorem~\ref{thm:asymptotics_Z_tilde}, we have that
\begin{equation*}
\sqrt{n}(\widetilde{Z}_n-Z_\infty)
\longrightarrow {\mathcal N}\big(0,
Z_{\infty}(1-Z_{\infty})\widetilde{\sigma}_\gamma^2\big)
\ \ \ \hbox{stably}.
\end{equation*}
Moreover, from Theorem \ref{thm:asymptotics_theta_1_star}, we have that
$$
\sqrt{\frac{n}{\ln(n)}}
\begin{pmatrix}
{\mathbf{Z}}_n-\widetilde{Z}_n\mathbf{1}\\
{\mathbf{N}}_n-\widetilde{Z}_n\mathbf{1}
\end{pmatrix}
{\longrightarrow}\
\mathcal{N} \left(\ \mathbf{0}\ ,\ Z_{\infty}(1-Z_{\infty})
\begin{pmatrix}
\widehat{\Sigma}^{*}_{\mathbf{ZZ}} & \widehat{\Sigma}^{*}_{\mathbf{ZN}}\\
\widehat{\Sigma}_{\mathbf{ZN}}^{*\top} & \widehat{\Sigma}^{*}_{\mathbf{NN}}
\end{pmatrix}\
\right)\ \ \ \ \hbox{stably}.
$$
In order to conclude, it is enough to observe that
\begin{equation*}
\sqrt{\frac{n}{\ln(n)}}
\begin{pmatrix}
\mathbf{Z}_n-Z_{\infty}\mathbf{1}\\
\mathbf{N}_n-Z_{\infty}\mathbf{1}
\end{pmatrix}
=
\sqrt{\frac{n}{\ln(n)}}
\begin{pmatrix}
{\mathbf{Z}}_n-\widetilde{Z}_n\mathbf{1}\\
{\mathbf{N}}_n-\widetilde{Z}_n\mathbf{1}
\end{pmatrix}
+
\sqrt{\frac{1}{\ln(n)}}\sqrt{n}
(\widetilde{Z}_n-Z_{\infty})
\begin{pmatrix}
\mathbf{1}\\
\mathbf{1}
\end{pmatrix},
\end{equation*}
where the last term converges in probability to zero.
\qed

\begin{center}
{\bf Acknowledgments}
\end{center}

\noindent Irene Crimaldi and Andrea Ghiglietti are members of the
Italian group ``Gruppo Nazionale per l'Analisi Matematica, la
Probabilit\`a e le loro Applicazioni (GNAMPA)'' of the Italian
Institute ``Istituto Nazionale di Alta Matematica (INdAM)''.\\ Giacomo
Aletti is a member of the Italian group ``Gruppo Nazionale per il
Calcolo Scientifico (GNCS)'' of the Italian Institute ``Istituto
Nazionale di Alta Matematica (INdAM)''.

\newpage
\appendix

\begin{center}
\huge{{\bf Appendix}}
\end{center}

\section{Some technical results}\label{app-A}

In all the sequel, given $(a_n)_n, (b_n)_n$ two sequences of real
numbers with $b_n\geq 0$, the notation $a_n=O(b_n)$ means $|a_n|\leq C
b_n$ for a suitable constant $C>0$ and $n$ large enough.  Therefore,
if we also have $a_n^{-1}=O(b_n^{-1})$, then $C'b_n\leq |a_n|\leq C
b_n$ for suitable constants $C,C'>0$ and $n$ large enough. Moreover,
given $(z_n)_n, (z'_n)_n$ two sequences of complex numbers, with
$z'_n\neq 0$, the notation $z_n=o(z'_n)$ means $\lim_n z_n/z'_n=0$.

\subsection{Asymptotic results for sums of complex numbers}

We start recalling an extension of the Toeplitz lemma (see
\cite{lin-ros}) to complex numbers provided in \cite{ale-cri-ghi},
from which we get useful technical results employed in our proofs.

\begin{lem}{\cite[Lemma A.2]{ale-cri-ghi}}\label{toeplitz-lemma-complex} 
(Generalized Toeplitz lemma)\\
Let $\{z_{n,k}:\, 1\leq k\leq k_n\}$ be a
triangular array of complex numbers such that
\begin{itemize}
\item[i)] $\lim_n z_{n,k}=0$ for each fixed $k$;
\item[ii)] $\lim_n \sum_{k=1}^{k_n} z_{n,k}=s\in\{0,1\}$;
\item[iii)] $\sum_{k=1}^{k_n} |z_{n,k}|=O(1)$.
\end{itemize}
Let $(w_n)_n$ be a sequence of complex numbers with $\lim_n
w_n=w\in{\mathbb C}$. Then, we have $\lim_n \sum_{k=1}^{k_n}
z_{n,k}w_k=s w$.
\end{lem}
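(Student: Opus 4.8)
The plan is to reduce the statement to the classical Silverman--Toeplitz argument by separating the ``mean'' contribution of $w$ from the ``fluctuation'' of $w_k$ around its limit. First I would write, for each $n$,
\[
\sum_{k=1}^{k_n} z_{n,k} w_k \;=\; w\sum_{k=1}^{k_n} z_{n,k} \;+\; \sum_{k=1}^{k_n} z_{n,k}\,(w_k - w).
\]
By hypothesis (ii), the first term on the right-hand side converges to $sw$, so it suffices to prove that the second term tends to $0$.

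To control $\sum_{k=1}^{k_n} z_{n,k}(w_k-w)$, I would fix $\varepsilon>0$ and, using $w_k\to w$, choose an index $K$ (depending only on $\varepsilon$ and the sequence $(w_k)$, not on $n$) such that $|w_k-w|\le\varepsilon$ for all $k>K$; then split the sum at $K$. For the tail $\sum_{k=K+1}^{k_n} z_{n,k}(w_k-w)$ I would bound the modulus by $\varepsilon\sum_{k=1}^{k_n}|z_{n,k}|\le \varepsilon M$, where $M$ is the constant furnished by hypothesis (iii). For the head $\sum_{k=1}^{K} z_{n,k}(w_k-w)$ — a sum over a fixed finite index set once $n$ is large enough that $k_n\ge K$ — each summand tends to $0$ as $n\to\infty$ by hypothesis (i), since $(w_k-w)_{1\le k\le K}$ is a fixed bounded family; hence the head vanishes in the limit. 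Combining these, $\limsup_n\bigl|\sum_{k=1}^{k_n} z_{n,k}(w_k-w)\bigr|\le \varepsilon M$, and letting $\varepsilon\downarrow 0$ yields the claim, so $\sum_{k=1}^{k_n} z_{n,k} w_k\to sw$.

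There is no serious obstacle here; the only point deserving a little care is the (implicit) interchange of limits in splitting off the finite head, which is legitimate precisely because $K$ is chosen before letting $n\to\infty$ and does not depend on $n$. The complex-valued setting causes no difficulty, since every estimate passes through the modulus and the triangle inequality. If one prefers not to assume $k_n\to\infty$, one simply replaces the head by $\sum_{k=1}^{K\wedge k_n}$ (and the tail by $\sum_{k=K\wedge k_n+1}^{k_n}$) throughout, and the same bounds apply verbatim.
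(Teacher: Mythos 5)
Your proof is correct and is exactly the classical Silverman--Toeplitz argument (split off $w\sum_k z_{n,k}$, then handle the fluctuation term with a head/tail split using (i) for the finite head and (iii) for the tail); the paper itself does not reprove this lemma but imports it from \cite[Lemma A.2]{ale-cri-ghi}, whose proof follows the same standard route, so there is nothing to add.
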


From this lemma we can easily get the following corollary, which
slightly extends the generalized version of the Kronecker lemma
provided in \cite[Corollary A.3]{ale-cri-ghi}:

\begin{cor}\label{kronecker-complex} (Generalized Kronecker lemma)\\
Let $\{v_{n,k}:1\leq k\leq n\}$ and $(z_n)_n$ be respectively a
triangular array and a sequence of complex numbers such that
 $v_{n,k}\neq 0$ and
$$
\lim_n v_{n,k}=0,\quad\lim_n v_{n,n}\;\hbox{exists finite},\quad
\sum_{k=1}^n \left|v_{n,k}-v_{n,k-1}\right|=O(1)
$$
and $\sum_n z_n$ is convergent. Then
$$
\lim_n \sum_{k=1}^n v_{n,k}z_k=0.
$$
\end{cor}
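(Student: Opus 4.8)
The plan is to deduce the Kronecker-type statement from the Generalized Toeplitz lemma (Lemma \ref{toeplitz-lemma-complex}) via the standard Abel summation trick. First I would introduce the partial sums $S_n := \sum_{k=1}^n z_k$, which by hypothesis converge to some finite limit $S \in \mathbb{C}$, and set $S_0 := 0$. Writing $z_k = S_k - S_{k-1}$ and performing summation by parts on $\sum_{k=1}^n v_{n,k} z_k$, I obtain
\begin{equation*}
\sum_{k=1}^n v_{n,k} z_k
= v_{n,n} S_n - \sum_{k=1}^{n} (v_{n,k} - v_{n,k-1}) S_{k-1},
\end{equation*}
where the boundary term uses $S_0 = 0$ (so the $k=1$ contribution of the telescoped piece carries no $v_{n,0}$ term, or equivalently one adopts the convention $v_{n,0}S_0 = 0$). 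The first term on the right converges to $(\lim_n v_{n,n}) \cdot S$, a finite constant.

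For the sum on the right, the idea is to apply Lemma \ref{toeplitz-lemma-complex} to the triangular array $z_{n,k} := v_{n,k} - v_{n,k-1}$ (with index range $1 \le k \le n$) and the convergent sequence $w_k := S_{k-1}$, whose limit is $w := S$. I must check the three hypotheses of that lemma. Condition (iii), namely $\sum_{k=1}^n |v_{n,k} - v_{n,k-1}| = O(1)$, is exactly one of the assumptions of the corollary. Condition (i), $\lim_n (v_{n,k} - v_{n,k-1}) = 0$ for each fixed $k$, follows from the assumption $\lim_n v_{n,k} = 0$ for every fixed $k$ (applied to both $k$ and $k-1$; note for $k=1$ one uses the convention $v_{n,0} = 0$, or simply that the corollary's hypothesis $\lim_n v_{n,k}=0$ is read for $k\ge 1$ and the telescoping sum starts cleanly). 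Condition (ii) is where I expect the main subtlety: the telescoping sum $\sum_{k=1}^n (v_{n,k} - v_{n,k-1}) = v_{n,n} - v_{n,0}$, which with the convention $v_{n,0} = 0$ equals $v_{n,n}$, and this tends to $\lim_n v_{n,n}$, a finite limit by hypothesis. The Toeplitz lemma as stated requires this limit $s$ to lie in $\{0,1\}$; to stay safely within the hypotheses I would split $v_{n,k} = v_{n,k}^{(0)}$ into the sum of a piece whose terminal value is $0$ and a piece proportional to a fixed normalization, or more simply invoke the trivially-adapted linear version: applying the lemma to the normalized array $z_{n,k}/s$ when $s \ne 0$ (and directly when $s=0$) gives $\sum_{k=1}^n (v_{n,k}-v_{n,k-1}) S_{k-1} \to s \cdot S$.

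Combining the two pieces, $\sum_{k=1}^n v_{n,k} z_k \to s S - s S = 0$, which is the claim. The only real obstacle is the bookkeeping around the boundary conventions ($v_{n,0}$ and $S_0$) and the fact that Lemma \ref{toeplitz-lemma-complex} is phrased with $s \in \{0,1\}$ rather than a general finite limit; both are handled by the elementary linearity remark above, so no genuinely new estimate is needed. I would write the summation-by-parts identity carefully once, verify the three Toeplitz hypotheses in a short displayed list, and conclude.
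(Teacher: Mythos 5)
Your argument is correct and is essentially the paper's own proof: summation by parts followed by an application of Lemma \ref{toeplitz-lemma-complex} to the array $z_{n,k}=v_{n,k}-v_{n,k-1}$, whose row sums telescope to $v_{n,n}$. The only difference is that the paper performs the Abel summation against the tails $w_k=\sum_{j\geq k}z_j$, which tend to zero, so the Toeplitz limit is $s\cdot 0=0$ and neither your normalization trick for $s\notin\{0,1\}$ nor the final cancellation $sS-sS$ is needed, whereas you use the partial sums $S_{k-1}\to S$; both versions are valid.
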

\begin{proof}
Without loss of generality, we can suppose $\lim_n
v_{n,n}=s\in\{0,1\}$.  Set $w_n=\sum_{k=n}^{+\infty} z_k$ and observe
that, since $\sum_n z_n$ is convergent, we have $\lim_n w_n=w=0$ and,
moreover, we can write
\begin{equation*}
\sum_{k=1}^n v_{n,k}z_k\ =\ \sum_{k=1}^n v_{n,k}(w_k-w_{k+1})\
=\ \sum_{k=2}^n (v_{n,k}-v_{n,k-1})w_k +v_{n,1}w_1-v_{n,n}w_{n+1}.
\end{equation*}
The second and the third term obviously converge to zero. In order to
prove that the first term converges to zero, it is enough to apply
Lemma \ref{toeplitz-lemma-complex} with $z_{n,k}=v_{n,k}-v_{n,k-1}$.
\end{proof}

The above corollary is useful to get the following result for complex
random variables, which again slightly extends the version provided in
\cite[Lemma A.3]{ale-cri-ghi}:

\begin{lem}\label{lemma-serie-rv}
Let ${\mathcal H}=({\mathcal H}_n)_n$ be a filtration and
$(Y_n)_n$ a $\mathcal H$-adapted sequence of complex random variables
such that $E[Y_{n}|{\mathcal H}_{n-1}]\to Y$ almost surely. Moreover,
let $(c_n)_n$ be a sequence of strictly positive real numbers such that
$\sum_n E\left[|Y_n|^2\right]/c_n^2<+\infty$ and let $\{v_{n,k},1\leq
k\leq n\}$ be a triangular array of complex numbers such that
$v_{n,k}\neq 0$ and
\begin{equation}\label{cond-serie-rv-1}
\lim_n v_{n,k}=0,\quad
\lim_n v_{n,n}\;\hbox{exists finite},\quad
\lim_n\sum_{k=1}^n \frac{v_{n,k}}{c_k}= \eta\in{\mathbb C},
\end{equation}
\begin{equation}\label{cond-serie-rv-2}
\sum_{k=1}^n\frac{|v_{n,k}|}{c_k}=O(1),
\quad
\sum_{k=1}^n \left|v_{n,k}-v_{n,k-1}\right|=O(1).
\end{equation}
Then $\sum_{k=1}^n v_{n,k}Y_k/c_k\stackrel{a.s.}\longrightarrow \eta Y$.
\end{lem}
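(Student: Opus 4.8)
\emph{Proof idea.} The plan is to split each $Y_k$ into its $\mathcal H$-predictable part $\mu_k:=E[Y_k\mid\mathcal H_{k-1}]$ and the martingale increment $\Delta_k:=Y_k-\mu_k$, handle the two resulting sums separately, and recombine. Since by hypothesis $\mu_k\to Y$ almost surely and
\[
\sum_{k=1}^n \frac{v_{n,k}}{c_k}\,Y_k
=\sum_{k=1}^n \frac{v_{n,k}}{c_k}\,\Delta_k
\;+\;\sum_{k=1}^n \frac{v_{n,k}}{c_k}\,\mu_k ,
\]
it is enough to prove that the first sum converges to $0$ and the second to $\eta Y$, both almost surely.

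For the martingale part, I would first note that, setting $M_n:=\sum_{k=1}^n \Delta_k/c_k$, the real and imaginary parts of $(M_n)_n$ are real $\mathcal H$-martingales and, since conditional expectation is an $L^2$-contraction and martingale increments are orthogonal,
\[
E\big[|M_n|^2\big]=\sum_{k=1}^n \frac{E[|\Delta_k|^2]}{c_k^2}
\le \sum_{k\ge1}\frac{E[|Y_k|^2]}{c_k^2}<+\infty .
\]
Hence $(M_n)_n$ is $L^2$-bounded, so it converges almost surely and, in particular, the series $\sum_k \Delta_k/c_k$ converges almost surely. Applying Corollary~\ref{kronecker-complex} pathwise, with $z_k:=\Delta_k/c_k$ and the array $\{v_{n,k}\}$, which satisfies $\lim_n v_{n,k}=0$, $\lim_n v_{n,n}$ finite and $\sum_{k=1}^n|v_{n,k}-v_{n,k-1}|=O(1)$ by~\eqref{cond-serie-rv-1}--\eqref{cond-serie-rv-2}, I obtain $\sum_{k=1}^n v_{n,k}\,\Delta_k/c_k\to 0$ almost surely.

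For the predictable part, I would write $\mu_k=Y+\varepsilon_k$ with $\varepsilon_k\to0$ almost surely, so that $\sum_{k=1}^n (v_{n,k}/c_k)\mu_k = Y\sum_{k=1}^n (v_{n,k}/c_k) + \sum_{k=1}^n (v_{n,k}/c_k)\varepsilon_k$; the first term tends to $\eta Y$ by the third condition in~\eqref{cond-serie-rv-1}, and the second tends to $0$ by a Toeplitz-type argument applied pathwise on the almost sure event $\{\varepsilon_k\to0\}$: the array $z_{n,k}:=v_{n,k}/c_k$ has $\lim_n z_{n,k}=0$ for each fixed $k$ and $\sum_{k=1}^n|z_{n,k}|=O(1)$ by~\eqref{cond-serie-rv-2}, which forces $\sum_{k=1}^n z_{n,k}\varepsilon_k\to0$ (either by Lemma~\ref{toeplitz-lemma-complex} against the vanishing limit $w=0$, or simply by splitting the sum at a large fixed index). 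Adding the limits gives $\sum_{k=1}^n v_{n,k}Y_k/c_k\to\eta Y$ almost surely. I do not anticipate a genuine obstacle: the analytic content is already packaged in the two lemmas above. The only subtlety is that Lemma~\ref{toeplitz-lemma-complex} restricts the row-sum limit to $\{0,1\}$, so one cannot feed $v_{n,k}/c_k$ (whose row sums converge to the general number $\eta$) directly into it against $w_k=\mu_k$; the remedy is the splitting above, isolating $Y\sum_k v_{n,k}/c_k$ via the explicit third hypothesis in~\eqref{cond-serie-rv-1} and invoking the Toeplitz lemma only against the null sequence $\varepsilon_k=\mu_k-Y$, while the martingale remainder is controlled by $\sum_k E[|Y_k|^2]/c_k^2<+\infty$ and the generalized Kronecker lemma.
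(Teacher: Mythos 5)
Your proposal is correct and follows essentially the same route as the paper: the same decomposition of $Y_k$ into $E[Y_k\mid\mathcal H_{k-1}]$ plus the martingale increment, the $L^2$-boundedness of $M_n=\sum_k (Y_k-E[Y_k\mid\mathcal H_{k-1}])/c_k$ combined with Corollary~\ref{kronecker-complex} applied pathwise for the martingale part, and a Toeplitz-type argument for the predictable part. The only (harmless) difference is cosmetic: the paper normalizes the array to $v_{n,k}/(c_k\eta)$ when $\eta\neq0$ (treating $\eta=0$ separately) so that Lemma~\ref{toeplitz-lemma-complex} applies verbatim, whereas you center $\mu_k$ at $Y$ and dispose of the null remainder $\varepsilon_k$ by the elementary splitting argument — both are valid.
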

\begin{proof} Let $A$ be an event such that $P(A)=1$ and $\lim_n
E[Y_n|{\mathcal H}_{n-1}](\omega)=Y(\omega)$ for each $\omega\in
A$. Fix $\omega \in A$ and set $w_n=E[Y_n|{\mathcal H}_{n-1}](\omega)$
and $w=Y(\omega)$. If $\eta\neq 0$, applying Lemma
\ref{toeplitz-lemma-complex} to $z_{n,k}=v_{n,k}/(c_k\eta)$, $s=1$ and $w_n$,
we obtain
$$
\lim_n \sum_{k=1}^n v_{n,k}\frac{E[Y_k|{\mathcal H}_{k-1}](\omega)}{c_k\eta}=
Y(\omega).
$$
If $\eta=0$,  applying Lemma
\ref{toeplitz-lemma-complex} to $z_{n,k}=v_{n,k}/c_k$, $s=0$ and $w_n$,
we obtain
$$
\lim_n \sum_{k=1}^n v_{n,k}\frac{E[Y_k|{\mathcal H}_{k-1}](\omega)}{c_k}=
0.
$$
Therefore, for both cases, we have
$$
\sum_{k=1}^n v_{n,k}\frac{E[Y_k|{\mathcal H}_{k-1}]}{c_k}
\stackrel{a.s.}\longrightarrow \eta Y.
$$

Now, consider the martingale $(M_n)_n$ defined by
$$
M_n=\sum_{k=1}^n \frac{Y_k-E[Y_k|{\mathcal H}_{k-1}]}{c_k}.
$$
It is bounded in $L^2$ since $\sum_{k=1}^n
\frac{E[|Y_k|^2]}{c_k^2}<+\infty$ by assumption and so it is almost surely
convergent, that means $$ \sum_{k}
\frac{Y_k(\omega)-E[Y_k|{\mathcal H}_{k-1}](\omega)}{c_k}<+\infty
$$ for $\omega\in B$ with $P(B)=1$. Therefore, fixing $\omega\in B$
and setting $z_k=\frac{Y_k(\omega)-E[Y_k|{\mathcal
      H}_{k-1}](\omega)}{c_k}$, by Corollary \ref{kronecker-complex},
we get
$$
\lim_n \sum_{k=1}^n v_{n,k}
\frac{Y_k(\omega)-E[Y_k|{\mathcal H}_{k-1}](\omega)}{c_k}
=0
$$
and so
$$
\sum_{k=1}^n v_{n,k}
\frac{Y_k-E[Y_k|{\mathcal H}_{k-1}]}{c_k}
\stackrel{a.s.}\longrightarrow 0.
$$
In order to conclude, it is enough to observe that
$$
\sum_{k=1}^n v_{n,k}\frac{Y_k}{c_k}=
\sum_{k=1}^n v_{n,k}\frac{Y_k-E[Y_k|{\mathcal H}_{k-1}]}{c_k} +
\sum_{k=1}^n v_{n,k}\frac{E[Y_k|{\mathcal H}_{k-1}]}{c_k}.
$$
\end{proof}

We conclude this subsection recalling the following well-known
relations for $a\in{\mathbb R}$:
\begin{equation}\label{relazione-nota}
\sum_{k=1}^n\frac{1}{k^{1-a}}=
\begin{cases}
& O(1) \quad\mbox{for } a<0,\\
& \ln(n)+O(1) \quad\mbox{for } a=0,\\
&a^{-1}\, n^a + O(1) \quad\mbox{for } 0<a\leq 1,\\
&a^{-1}\, n^{a} + O(n^{a-1}) \quad\mbox{for } a>1.
\end{cases}
\end{equation}
More precisely, in the case $a=0$, we have
\begin{equation}\label{eulero_mascheroni}
d_n=\sum_{k=1}^n\frac{1}{k}-\ln(n)=
d+O(n^{-1})
\end{equation}
where $d$ denotes the Euler-Mascheroni constant.

\subsection{Asymptotic results for products of complex numbers}

Fix $\gamma=1$ and $c>0$, and consider a sequence $(r_n)_n$ of
real numbers such that $0\leq r_n<1$ for each $n$ and
\begin{equation}\label{ass:condition_r_n_1_appendix}
nr_n-c\ =\  O\left(n^{-1}\right).
\end{equation}
Obviously, we have $r_n>0$ for $n$ large enough and so in the sequel,
without loss of generality, we will assume $0<r_n<1$ for all $n$.
\\

\indent Let $x=a_x+i\,b_x\in{\mathbb C}$ and
$y=a_y+i\,b_y\in{\mathbb C}$ with $a_x,a_y>0$ and $c(a_x+a_y)\geq1$.
Denote by $m_0\geq 2$ an integer such that $\max\{a_x,\,a_y\}r_m<1$
for all $m\geq m_0$ and set:
$$
p_{m_0-1}(x):=1,\quad
p_{n}(x):=\prod_{m=m_0}^n (1-x r_m)\; \hbox{for } n\geq m_0
\quad\mbox{and}\quad
F_{k+1,n}(x):=\frac{p_{n}(x)}{p_{k}(x)}\; \hbox{for } m_0-1\leq k\leq n-1.
$$

We recall the following result, which has been proved in \cite{ale-cri-ghi}:
\begin{lem}\cite[Lemma A.4]{ale-cri-ghi}\label{lemma-tecnico_1}
We have that
\begin{equation*}
|p_n(x)|\ =\ O\left(n^{- ca_x}\right)\qquad\mbox{and}\qquad
|p^{-1}_n(x)|\ =\ O\left(n^{ca_x}\right).
\end{equation*}
\end{lem}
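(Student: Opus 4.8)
The plan is to pass to logarithms and to exploit the strengthened hypothesis $nr_n-c=O(n^{-1})$ from \eqref{ass:condition_r_n_1_appendix}, which is exactly what upgrades the error terms from $o(\ln n)$ to a genuine $O(1)$. First I would observe that, for every $m\ge m_0$, the factor $1-xr_m$ has real part $1-a_xr_m\in(0,1)$, hence lies in the open right half-plane and in particular is nonzero; thus $p_n(x)\ne 0$ and $\ln|p_n(x)|$ is well defined. Writing $|x|^2=a_x^2+b_x^2$ one has $|1-xr_m|^2=1-2a_xr_m+|x|^2r_m^2$, so
\[
\ln|p_n(x)|\;=\;\tfrac12\sum_{m=m_0}^{n}\ln\!\bigl(1-2a_xr_m+|x|^2r_m^2\bigr).
\]

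Since $r_m\to 0$, the quantity $u_m:=-2a_xr_m+|x|^2r_m^2$ tends to $0$, so for $m$ large enough $|\ln(1+u_m)-u_m|\le C\,u_m^2\le C'\,r_m^2$; absorbing the finitely many remaining indices into an $O(1)$ term and using $r_m=O(m^{-1})$ (hence $\sum_m r_m^2=O(1)$ by \eqref{relazione-nota}), I obtain
\[
\ln|p_n(x)|\;=\;-a_x\sum_{m=m_0}^{n}r_m\;+\;O(1).
\]
Finally, \eqref{ass:condition_r_n_1_appendix} gives $r_m=c/m+\delta_m$ with $|\delta_m|\le C\,m^{-2}$, so $\sum_{m=m_0}^{n}\delta_m$ is bounded, and \eqref{relazione-nota} (or \eqref{eulero_mascheroni}) yields $\sum_{m=m_0}^{n}r_m=c\ln n+O(1)$. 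Hence $\ln|p_n(x)|=-ca_x\ln n+O(1)$, i.e. there are constants $0<C'\le C$ with $C'\,n^{-ca_x}\le|p_n(x)|\le C\,n^{-ca_x}$ for $n$ large; the two displayed estimates in the statement are precisely the upper bound on $|p_n(x)|$ and the upper bound on $|p_n(x)|^{-1}$.

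The only mildly delicate point is the uniformity of these constants, and this is exactly where the rate $nr_n-c=O(n^{-1})$ is needed rather than the weaker $nr_n\to c$: under the latter one would only get $\sum_{m=m_0}^{n}r_m=c\ln n+o(\ln n)$, hence $|p_n(x)|=n^{-ca_x+o(1)}$, which does not give the claimed $O(n^{-ca_x})$. The $O(n^{-1})$ rate forces $\sum_m\delta_m$ (and a fortiori $\sum_m r_m^2$) to converge, turning the accumulated error into an honest $O(1)$. Everything else in the argument is routine real analysis, and this is indeed the proof carried out in \cite{ale-cri-ghi}.
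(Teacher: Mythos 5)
Your proposal is a correct and complete proof: the logarithmic expansion of $|p_n(x)|^2=\prod_m(1-2a_xr_m+|x|^2r_m^2)$, the absorption of the quadratic terms via $\sum_m r_m^2<\infty$, and the use of $r_m=c/m+O(m^{-2})$ to get $\sum_{m=m_0}^n r_m=c\ln n+O(1)$ give a genuinely two-sided estimate $\ln|p_n(x)|=-ca_x\ln n+O(1)$, which yields both displayed bounds. Note that the present paper does not prove this lemma at all but imports it from \cite{ale-cri-ghi} (Lemma A.4), where the argument is of exactly this type, so your proof is essentially the cited one rather than an alternative route.
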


Inspired by the computation done in
\cite{ale-cri-ghi,cri-dai-lou-min}, we can prove the following
other technical result:

\begin{lem}\label{lemma-tecnico_2}
(i) When $c(a_x+a_y)=1$, we have
\begin{equation}\label{affermazione2_log}
\lim_n \frac{n}{\ln(n)}\sum_{k=m_0}^{n-1} r_k^2 F_{k+1,n}(x)F_{k+1,n}(y)\
=\
\left\{\begin{aligned}
&c^2\ &\mbox{ if }b_x+b_y=0,\\
&0\ &\mbox{ if } b_x+b_y\neq 0;
\end{aligned}\right.\end{equation}
while when $c(a_x+a_y)>1$, we have
\begin{equation}\label{affermazione2}\begin{aligned}
&\lim_n n\sum_{k=m_0}^{n-1} r_k^2 F_{k+1,n}(x)F_{k+1,n}(y)
\ =\
\frac{c^2}{c(x+y)-1},\\
&\lim_n n\sum_{k=m_0}^{n-1} r_k^2 \ln\left(\frac{n}{k}\right)F_{k+1,n}(x)F_{k+1,n}(y)
\ =\
\frac{c^2}{(c(x+y)-1)^2},\\
&\lim_n n\sum_{k=m_0}^{n-1}r_k^2\ln^2\left(\frac{n}{k}\right)F_{k+1,n}(x)F_{k+1,n}(y)
\ = \
\frac{2c^2}{(c(x+y)-1)^3}.
\end{aligned}\end{equation}
(ii) Moreover, for any $u\geq 1$, we have:\\
\noindent when $c(a_x+a_y)=1$
\begin{equation}\label{affermazione3_log}
\sum_{k=m_0}^{n-1} r_k^{2u}\frac{|p_{n}(x)|^u|p_{n}(y)|^u}{|p_{k}(x)|^u|p_{k}(y)|^u}
=\left\{\begin{aligned}
&O(\ln(n)/n)\quad&\mbox{for } u=1,\\
&O\left(n^{-u}\right)\quad&\mbox{for } u>1;
\end{aligned}\right.
\end{equation}
while when $c(a_x+a_y)>1$ and $e\in\{0,1,2\}$
\begin{equation}\label{affermazione3}
\sum_{k=m_0}^{n-1} r_k^{2u}\ln^{e u}
\left(\frac{n}{k}\right) \frac{|p_{n}(x)|^u|p_{n}(y)|^u}{|p_{k}(x)|^u|p_{k}(y)|^u}
=\begin{cases}
O(n^{-uc(a_x+a_y)}\ln^{eu}(n))\quad &\hbox{for } uc(a_x+a_y)<2u-1,\\
O(n^{-(2u-1)}\ln^{eu+1}(n))\quad &\hbox{for } uc(a_x+a_y)=2u-1,\\
O(n^{-(2u-1)})\quad &\hbox{for } uc(a_x+a_y)>2u-1
\end{cases}
\end{equation}
(note that for $u=1$ only the third case is possible).
\end{lem}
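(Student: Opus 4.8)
The plan is to prove part (ii) first, since its $L^1$-type bounds serve as a priori estimates in part (i), and then to attack part (i) through a one-step recursion in $n$. Throughout set $a_x:={\mathcal Re}(x)$, $a_y:={\mathcal Re}(y)$, $s:=c(x+y)$, and recall from \eqref{ass:condition_r_n_1_appendix} that $r_n=c/n+O(n^{-2})$, hence $(1-xr_n)(1-yr_n)=1-s/n+O(n^{-2})$. For part (ii) I would apply Lemma~\ref{lemma-tecnico_1} to $x$ and to $y$ to obtain the two-sided bound $|p_m(z)|\asymp m^{-c\,{\mathcal Re}(z)}$, so that
\[
r_k^{2u}\,\frac{|p_n(x)|^u|p_n(y)|^u}{|p_k(x)|^u|p_k(y)|^u}\ =\ O\!\left(k^{-2u}\Big(\tfrac{k}{n}\Big)^{uc(a_x+a_y)}\right);
\]
this reduces every sum in \eqref{affermazione3} to $O\big(n^{-uc(a_x+a_y)}\big)\sum_{k=m_0}^{n-1}k^{uc(a_x+a_y)-2u}\ln^{eu}(n/k)$, and, writing $a:=uc(a_x+a_y)-(2u-1)$ and splitting into the cases $a<0$, $a=0$, $a>0$, the three estimates follow from \eqref{relazione-nota}; \eqref{affermazione3_log} is the same computation with $c(a_x+a_y)=1$, where the ratio is $O((k/n)^u)$ and the residual sum is $O(n^{-u})\sum_k k^{-u}$. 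The point requiring care is the regime $uc(a_x+a_y)>2u-1$ (the only one when $u=1$): there $\ln^{eu}(n/k)$ must be kept inside the sum, because $\sum_k k^{a-1}\ln^{eu}(n/k)$ with $a>0$ is dominated by the indices $k$ comparable to $n$, and comparison with $n^a\int_0^1 t^{a-1}(-\ln t)^{eu}\,dt$ gives $O(n^a)$ with no extra logarithm, hence the sharp $O(n^{-(2u-1)})$.

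For part (i) I would first establish a recursion in $n$. Using $F_{k+1,n}(z)=(1-zr_n)F_{k+1,n-1}(z)$ and the fact that, after pulling out $(1-xr_n)(1-yr_n)$, the reduced sum equals $T_{n-1}+r_{n-1}^2$ (the term $k=n-1$ giving $r_{n-1}^2$), one gets
\[
T_n:=\sum_{k=m_0}^{n-1}r_k^2F_{k+1,n}(x)F_{k+1,n}(y)\ =\ (1-xr_n)(1-yr_n)\big(T_{n-1}+r_{n-1}^2\big),
\]
and, expanding $\ln(n/k)=\ln((n-1)/k)+\ln(n/(n-1))$ with $\ln(n/(n-1))=n^{-1}+O(n^{-2})$ (and similarly for $\ln^2(n/k)$), one obtains for the $\ln(n/k)$- and $\ln^2(n/k)$-weighted sums $T^{(1)}_n$, $T^{(2)}_n$ the coupled recursions $T^{(1)}_n=(1-xr_n)(1-yr_n)\big(T^{(1)}_{n-1}+\tfrac1n(T_{n-1}+r_{n-1}^2)+O(n^{-2})(\cdots)\big)$ and $T^{(2)}_n=(1-xr_n)(1-yr_n)\big(T^{(2)}_{n-1}+\tfrac2n T^{(1)}_{n-1}+O(n^{-2})(\cdots)\big)$. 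Putting $W_n:=nT_n$, $W^{(e)}_n:=nT^{(e)}_n$ and inserting $(1-xr_n)(1-yr_n)=1-s/n+O(n^{-2})$ and $(n-1)r_{n-1}^2=c^2/n+O(n^{-2})$, these become
\[
W_n=\big(1-\tfrac{s-1}{n}+O(n^{-2})\big)W_{n-1}+\tfrac{c^2}{n}+(\text{summable}),
\]
and analogues for $W^{(1)}_n$, $W^{(2)}_n$ with inhomogeneous main terms $\tfrac1n W_{n-1}$ and $\tfrac2n W^{(1)}_{n-1}$; here part (ii) with $u=1$ supplies the a priori bounds $W_n,W^{(1)}_n,W^{(2)}_n=O(1)$ if ${\mathcal Re}(s)>1$ and $=O(\ln n)$ if ${\mathcal Re}(s)=1$, which make the error terms summable.

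To solve these recursions I would use the elementary fact that $a_n=(1-\tfrac{s-1}{n})a_{n-1}+\tfrac1n$ — which is solved by $a_n=\sum_{j=m_0}^n\Pi_{j,n}/j$, where $\Pi_{j,n}:=\prod_{j<m\le n}(1-\tfrac{s-1}{m})$ — converges to $1/(s-1)$ whenever ${\mathcal Re}(s-1)>0$, since $a_n-\tfrac1{s-1}=\Pi_{m_0,n}(a_{m_0}-\tfrac1{s-1})\to0$ by Lemma~\ref{lemma-tecnico_1}; combined with $|\Pi_{j,n}|=O((j/n)^{{\mathcal Re}(s-1)})$ this shows that any recursion $W_n=(1-\tfrac{s-1}{n}+O(n^{-2}))W_{n-1}+\tfrac{g_n}{n}+(\text{summable})$ with $g_n\to g$ has limit $g/(s-1)$. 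Applying this successively with $g=c^2$, then $g=\lim W_n=c^2/(s-1)$, then $g=2\lim W^{(1)}_n=2c^2/(s-1)^2$ yields exactly \eqref{affermazione2}. When ${\mathcal Re}(s)=1$, write $s=1+i\tau$ with $\tau=c({\mathcal Im}(x)+{\mathcal Im}(y))$: if $\tau=0$ the recursion reduces to $W_n=W_{n-1}+\tfrac{c^2}{n}+(\text{summable})=c^2\ln n+O(1)$, giving $\tfrac{n}{\ln n}T_n\to c^2$; if $\tau\ne0$ the products $\prod_m(1-\tfrac{i\tau}{m})$ stay bounded away from $0$ and $\infty$, iteration gives $W_n=c^2 n^{-i\tau}\sum_{j=m_0}^n j^{\,i\tau-1}+O(1)$, and $\sum_{j=1}^n j^{\,i\tau-1}=O(1)$ (compare with $\int_1^n t^{\,i\tau-1}\,dt=\tfrac{n^{i\tau}-1}{i\tau}$, which is bounded, the remainder $\sum-\int$ being absolutely convergent), so $W_n=O(1)$ and $\tfrac{n}{\ln n}T_n\to0$, which is \eqref{affermazione2_log}.

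The main obstacles I expect are, in part (i), the bookkeeping that pins down the inhomogeneous main terms of the recursions for $W^{(1)}_n$ and $W^{(2)}_n$ as exactly $\tfrac1n W_{n-1}$ and $\tfrac2n W^{(1)}_{n-1}$ — this is what generates the chain $\tfrac{c^2}{s-1},\ \tfrac{c^2}{(s-1)^2},\ \tfrac{2c^2}{(s-1)^3}$ — and, in the boundary case ${\mathcal Im}(x)+{\mathcal Im}(y)\ne0$, extracting the oscillatory cancellation $\sum_{j\le n}j^{\,i\tau-1}=O(1)$ responsible for the vanishing limit; in part (ii), the only delicate point is getting the sharp $O(n^{-(2u-1)})$ in the regime $uc(a_x+a_y)>2u-1$ without picking up a spurious $\ln(n)$.
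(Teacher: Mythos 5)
Your proposal is correct, and part (ii) is essentially the paper's own computation: the two-sided bound from Lemma~\ref{lemma-tecnico_1}, reduction to $O(n^{-uc(a_x+a_y)})\sum_k k^{uc(a_x+a_y)-2u}\ln^{eu}(n/k)$, the three cases via \eqref{relazione-nota}, and an integral comparison (with a monotonicity splitting near $0$) to avoid a spurious logarithm in the regime $uc(a_x+a_y)>2u-1$. Part (i), however, is a genuinely different route. The paper does not re-derive the first limit in \eqref{affermazione2} nor \eqref{affermazione2_log}: it imports them from \cite{ale-cri-ghi} and obtains the two log-weighted limits by an increment-comparison argument, introducing $\mathcal{S}_{1,n},\mathcal{S}_{2,n},\mathcal{S}_{3,n}$ and auxiliary sequences $G_{i,k}$ proportional to $\ln^{i-1}(k)/[k\,p_k(x)p_k(y)]$ whose increments equal $(c(x+y)-1)\Delta\mathcal{S}_{i,k}$ plus lower-order terms, so that each new limit is bootstrapped from the previously established one by telescoping and the error control of Lemma~\ref{lemma-tecnico_1} and \eqref{relazione-nota}. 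You instead prove all five limits self-containedly from the exact one-step recursion $T_n=(1-xr_n)(1-yr_n)(T_{n-1}+r_{n-1}^2)$ and its $\ln(n/k)$-, $\ln^2(n/k)$-weighted analogues, normalize by $n$, and solve the resulting perturbed linear recursions by variation of constants; the chain $c^2/(s-1)\to c^2/(s-1)^2\to 2c^2/(s-1)^3$ then emerges transparently (each inhomogeneity is the previous limit, times $1$ or $2$), and the boundary case $c(a_x+a_y)=1$ with $b_x+b_y\neq 0$ is settled by the oscillatory cancellation $\sum_{j\le n}j^{\,i\tau-1}=O(1)$ rather than by citation. Both arguments are sound; yours is more elementary and self-contained (it even reproves the limits the paper cites), at the price of the recursion bookkeeping, of using part (ii) as an a priori bound so that the $O(n^{-2})W_{n-1}$ perturbations are summable, and of the small observation that Lemma~\ref{lemma-tecnico_1} applies to $\prod_m(1-(s-1)/m)$ because the sequence $1/m$ satisfies \eqref{ass:condition_r_n_1_appendix} with $c=1$; the paper's version is shorter because it leans on \cite{ale-cri-ghi} and reuses the same $\Delta G$ versus $\Delta\mathcal{S}$ trick for both weighted sums.
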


\begin{proof} (i) First of all, let us notice that the limit
\eqref{affermazione2_log} and the first of the limits
\eqref{affermazione2} have already been proved
in~\cite[Eq. (A.11),(A.18)]{ale-cri-ghi}. Therefore, we can focus on
the second and the third limits in \eqref{affermazione2}. To this end,
let us set
$$
\mathcal{S}_{1,n}:=\sum_{k=m_0}^{n-1} \frac{r_k^2}{p_k(x)p_k(y)},\qquad
\mathcal{S}_{2,n}:=\sum_{k=m_0}^{n-1} \frac{r_k^2\ln(k)}{p_k(x)p_k(y)},\qquad
\mathcal{S}_{3,n}:=\sum_{k=m_0}^{n-1} \frac{r_k^2\ln^2(k)}{p_k(x)p_k(y)},
$$
so that, recalling the equality $F_{k+1,n}(x)=p_n(x)/p_k(x)$, we can write:

\begin{equation*}
\begin{aligned}
&n\sum_{k=m_0}^{n-1} r_k^2 F_{k+1,n}(x)F_{k+1,n}(y)=
np_n(x)p_n(y)\mathcal{S}_{1,n},\\
&n\sum_{k=m_0}^{n-1} r_k^2 \ln\left(\frac{n}{k}\right)F_{k+1,n}(x)F_{k+1,n}(y)=
np_n(x)p_n(y)\left(\ln(n)\mathcal{S}_{1,n}-\mathcal{S}_{2,n}\right),\\
&n\sum_{k=m_0}^{n-1} r_k^2 \ln^2\left(\frac{n}{k}\right)F_{k+1,n}(x)F_{k+1,n}(y)=
np_n(x)p_n(y)
\left(\ln^2(n)\mathcal{S}_{1,n}-2\ln(n)\mathcal{S}_{2,n}+\mathcal{S}_{3,n}\right).
\end{aligned}
\end{equation*}
Now, set $G_{1,k}:=c^2/[kp_{k}(x)p_{k}(y)]$ and recall that, as seen
in~\cite[Proof of Lemma A.5]{ale-cri-ghi}, when $c(a_x+a_y)>1$ we have
\begin{equation}\label{irene1}
\Delta G_{1,k}
=(c(x+y)-1)\Delta\mathcal{S}_{1,k}
+O\left(k^{-1} |\Delta \mathcal{S}_{1,k}| \right).
\end{equation}
Using analogous arguments, we can set
 $G_{2,k}:=c^2\ln(k)/[kp_{k}(x)p_{k}(y)]$ and observe that we have:
\begin{equation*}
\begin{split}
\Delta G_{2,k}&=\frac{c^2}{p_k(x)p_k(y)}
\left[
\left(\frac{\ln(k)}{k}-\frac{\ln(k-1)}{k-1}\right)
\left(1-(x+y)r_k+r_k^2xy\right) +
\frac{\ln(k)}{k}\left((x+y)r_k-r_k^2xy\right)\right]
\\
&=\frac{c^2}{p_k(x)p_k(y)}
\left[
\left(-\frac{\ln(k)}{k^2}+\frac{1}{k^2}+o(k^{-2})\right)
\left(1-(x+y)r_k+r_k^2xy\right) +
\frac{\ln(k)}{k}\left((x+y)r_k-r_k^2xy\right)
\right]
\\
&=(c(x+y)-1)\Delta \mathcal{S}_{2,k}+\Delta \mathcal{S}_{1,k}
+O(k^{-1} |\Delta\mathcal{S}_{2,k}|).
\end{split}
\end{equation*}
Therefore, when $c(a_x+a_y)>1$, we obtain
\begin{equation}\label{irene2}
\frac{\Delta G_{2,k}}{c(x+y)-1}-\Delta\mathcal{S}_{2,k}
=\frac{\Delta\mathcal{S}_{1,k}}{c(x+y)-1}+
O\left(k^{-1}\ln(k) |\Delta\mathcal{S}_{1,k}| \right).
\end{equation}
The relations \eqref{irene1}, \eqref{irene2} and the first limit in
\eqref{affermazione2} imply
\begin{align*}
\lim_n & np_n(x)p_n(y)
\Big(\ln(n) \mathcal{S}_{1,n}-\mathcal{S}_{2,n}\Big)
\\
&=\lim_n np_n(x)p_n(y)
\Big(
\frac{\ln(n)G_{1,n}}{c(x+y)-1}-\mathcal{S}_{2,n}
\Big)+
O\Big(\ln(n)n |p_n(x)p_n(y)|\sum_{k=m_0}^{n-1} k^{-1}|\Delta\mathcal{S}_{1,k}|
\Big)
\\
&= \lim_n np_n(x)p_n(y)
\Big(
\frac{G_{2,n}}{c(x+y)-1}-\mathcal{S}_{2,n}
\Big)
\\
&= (c(x+y)-1)^{-1}
\lim_n np_n(x)p_n(y)\mathcal{S}_{1,n}+
O\Big(n |p_n(x)p_n(y)|\sum_{k=m_0}^{n-1} k^{-1}\ln(k)|\Delta\mathcal{S}_{1,k}|
\Big)
\\
&= (c(x+y)-1)^{-1}
\lim_n np_n(x)p_n(y)\mathcal{S}_{1,n}=\frac{c^2}{(c(x+1)-1)^2},
\end{align*}
where we have used the fact that, by Lemma \ref{lemma-tecnico_1} and
relation \eqref{relazione-nota}, we have
$$
O\Big(\ln(n)n |p_n(x)p_n(y)|
\sum_{k=m_0}^{n-1} k^{-1}|\Delta\mathcal{S}_{1,k}|\Big)
=
O\Big(\frac{\ln(n)}{n^{c(a_x+a_y)-1}}
\sum_{k=m_0}^{n-1} \frac{1}{k^{1-(c(a_x+a_y)-2)}}
\Big)
\longrightarrow 0.
$$

For the last limit, we can set
$G_{3,k}:=c^2\ln^2(k)/[kp_{k}(x)p_{k}(y)]$ and, similarly as above,
observe that we have:
\begin{equation*}
\begin{split}
 \Delta G_{3,k}
&=\frac{c^2}{p_k(x)p_k(y)}
\left[
\left(\frac{\ln^2(k)}{k}-\frac{\ln^2(k-1)}{k-1}\right)
\left(1-(x+y)r_k+r_k^2xy\right) +
\frac{\ln^2(k)}{k}\left((x+y)r_k-r_k^2xy\right)
\right]
\\
&=\frac{c^2}{p_k(x)p_k(y)}\times
\\
&\left[
\left(-\frac{\ln^2(k)}{k^2}+2\frac{\ln(k)}{k^2}+O(k^{-3}\ln^2(k))\right)
\left(1-(x+y)r_k+r_k^2xy\right)+
\frac{\ln^2(k)}{k}\left((x+y)r_k-r_k^2xy\right)
\right]
\\
&=(c(x+y)-1)\Delta \mathcal{S}_{3,k}+2\Delta \mathcal{S}_{2,k}+
O(k^{-1} |\Delta\mathcal{S}_{3,k}| ).
\end{split}
\end{equation*}
Therefore, when $c(a_x+a_y)>1$, we obtain
\begin{equation}\label{irene3}
\frac{\Delta G_{3,k}}{c(x+y)-1}- \Delta \mathcal{S}_{3,k}
=\frac{2\Delta \mathcal{S}_{2,k}}{c(x+y)-1}
+O(k^{-1}\ln^2(k) |\Delta\mathcal{S}_{1,k}| ).
\end{equation}
By means of analogous computations as above, the relations
\eqref{irene1}, \eqref{irene2}, \eqref{irene3} and the
already proved second limit in \eqref{affermazione2} imply
\begin{align*}
\lim_n & np_n(x)p_n(y)
\left(\ln^2(n)\mathcal{S}_{1,n}-2\ln(n)\mathcal{S}_{2,n}
+\mathcal{S}_{3,n}\right)
\\
&= \lim_n np_n(x)p_n(y)
\Big(\frac{\ln^2(n)G_{1,n}}{c(x+y)-1}
-2\ln(n)\mathcal{S}_{2,n}+\mathcal{S}_{3,n}\Big)
\!+\!O\Big(
\ln^2(n)n |p_n(x)p_n(y)| \sum_{k=m_0}^{n-1} k^{-1}|\Delta\mathcal{S}_{1,k}|
\Big)
\\
&= \lim_n np_n(x)p_n(y)
\Big(\frac{\ln(n) G_{2,n}}{c(x+y)-1}
-2\ln(n)\mathcal{S}_{2,n}+\mathcal{S}_{3,n}\Big)
\\
& = \lim_n np_n(x)p_n(y)
\Big(\frac{\ln(n)G_{2,n}}{c(x+y)-1}
-2\frac{ \ln(n)(G_{2,n}-\mathcal{S}_{1,n}) }{c(x+y)-1}
+\mathcal{S}_{3,n}\Big)
\\
& \qquad + O\Big(
\ln(n)n |p_n(x)p_n(y)|
\sum_{k=m_0}^{n-1} k^{-1}\ln(k)|\Delta\mathcal{S}_{1,k}|
\Big)
\\
&= \lim_n np_n(x)p_n(y)
\Big(\frac{2\ln(n)\mathcal{S}_{1,n}}{c(x+y)-1}
-\frac{G_{3,n}}{c(x+y)-1}
+\mathcal{S}_{3,n}\Big)
\\
& = \frac{2}{c(x+y)-1}\lim_n np_n(x)p_n(y)
\Big(\ln(n)\mathcal{S}_{1,n}-\mathcal{S}_{2,n}\Big)
\!+\!O\Big(n |p_n(x)p_n(y)|
\sum_{k=m_0}^{n-1} k^{-1}\ln^2(k)|\Delta\mathcal{S}_{1,k}|
\Big)
\\
& = \frac{2}{c(x+y)-1}\lim_n np_n(x)p_n(y)
\Big(\ln(n)\mathcal{S}_{1,n}-\mathcal{S}_{2,n}\Big)=
\frac{2c^2}{(c(x+1)-1)^3},
\end{align*}
where we have used the fact that, by Lemma \ref{lemma-tecnico_1} and
relation \eqref{relazione-nota}, we have
$$
O\Big(\ln^2(n) n |p_n(x)p_n(y)|
\sum_{k=m_0}^{n-1} k^{-1}|\Delta\mathcal{S}_{1,k}|\Big)
=
O\Big(\frac{\ln^2(n)}{n^{c(a_x+a_y)-1}}
\sum_{k=m_0}^{n-1} \frac{1}{k^{1-(c(a_x+a_y)-2)}}
\Big)
\longrightarrow 0.
$$

ii) For the second part of the proof, note that by
condition~\eqref{ass:condition_r_n_1_appendix} on $(r_n)_n$, relation
\eqref{relazione-nota} and Lemma~\ref{lemma-tecnico_1}, when
$c(a_x+a_y)=1$, we have
$$
\sum_{k=m_0}^{n-1} r_k^{2u}
\frac{|p_{n}(x)|^u|p_{n}(y)|^u}{|p_{k}(x)|^u|p_{k}(y)|^u}=
O(n^{-u})\sum_{k=m_0}^{n-1} O(k^{-u}) =
\left\{\begin{aligned}
&O(\ln(n)/n)\quad&\mbox{for } u=1,\\
&O\left(n^{-u}\right)\quad&\mbox{for } u>1.
\end{aligned}\right.$$
For the case $c(a_x+a_y)>1$, note that for $u\geq 1$ and
$e\in\{0,1,2\}$, we have
\[\begin{aligned}
&\sum_{k=m_0}^{n-1} r_k^{2u}\ln^{e u}\Big(\frac{n}{k}\Big)
\frac{|p_{n}(x)|^u|p_{n}(y)|^u}{|p_{k}(x)|^u|p_{k}(y)|^u} =
\sum_{k=m_0}^{n-1} O(k^{-2u})\ln^{e u}\Big(\frac{n}{k}\Big)
O\Big(\Big(\frac{k}{n}\Big)^{uc(a_x+a_y)}\Big) =\\
&n^{-2u}\sum_{k=m_0}^{n-1} \ln^{e u}\Big(\frac{n}{k}\Big)
O\Big(\Big(\frac{k}{n}\Big)^{u(c(a_x+a_y)-2)}\Big).
\end{aligned}\]
Then, for $e=0$, using relation \eqref{relazione-nota}, it is easy to
see that
$$
n^{-2u}\sum_{k=m_0}^{n-1}O\left(\left(\frac{k}{n}\right)^{u(c(a_x+a_y)-2)}\right)
=\begin{cases}
O(n^{-uc(a_x+a_y)})\quad &\hbox{for } uc(a_x+a_y)<2u-1,\\
O(n^{-(2u-1)}\ln(n))\quad &\hbox{for } uc(a_x+a_y)=2u-1,\\
O(n^{-(2u-1)})\quad &\hbox{for } uc(a_x+a_y)>2u-1
\end{cases}
$$
(note that for $u=1$ only the third case is possible).
\\

\indent Now we consider the cases $e=1$ and $e=2$. Note that, setting
$\alpha:=2u-uc(a_x+a_y)\in\mathbb{R}$ and $\beta:=eu\geq1$, we have
that
$$
\frac{1}{n}\sum_{k=m_0}^{n-1} \ln^{\beta}\Big(\frac{n}{k}\Big)\,
O\Big(\Big(\frac{k}{n}\Big)^{-\alpha}\Big)
\ =\
O(1)\ +\ O\left(\int_{\frac{m_0-1}{n}}^{\epsilon}x^{-\alpha}\ln^{\beta}(x^{-1})dx\right),
$$
where $\epsilon\in(0,1)$ has been chosen such that
$g(x)=x^{-\alpha}\ln^{\beta}(x^{-1})$ is monotone in $(0,\epsilon]$ and we
  recall that $(m_0-1)\geq 1$. Then, we have that
$$
\int_{\frac{m_0-1}{n}}^{\epsilon}x^{-\alpha}\ln^{\beta}(x^{-1})dx
=\begin{cases}
O(n^{\alpha-1}\ln^{\beta}(n))\quad &\hbox{for } \alpha>1\\
O(\ln^{\beta+1}(n))\quad &\hbox{for } \alpha=1,\\
O(1)\quad &\hbox{for } \alpha<1.
\end{cases}
$$
Finally, we can conclude that, for the cases $e=1$ and $e=2$, we have
$$
n^{-2u}\sum_{k=m_0}^{n-1}\ln^{e u}\Big(\frac{n}{k}\Big)
O\Big(\Big(\frac{k}{n}\Big)^{u(c(a_x+a_y)-2)}\Big)
=\begin{cases}
O(n^{-uc(a_x+a_y)}\ln^{eu}(n))\quad &\hbox{for } uc(a_x+a_y)<2u-1,\\
O(n^{-(2u-1)}\ln^{eu+1}(n))\quad &\hbox{for } uc(a_x+a_y)=2u-1,\\
O(n^{-(2u-1)})\quad &\hbox{for } uc(a_x+a_y)>2u-1
\end{cases}
$$
(note again that for $u=1$ only the third case is possible).
\end{proof}

\begin{rem}\label{rem:relations_diff_v}
\rm Setting $v^{(e)}_{n,k}:=(n/k)\ln^e(n/k)F_{k+1,n}(x)F_{k+1,n}(y)$
for any $e\in\{0,1,2\}$ and $m_0-1\leq k\leq n-1$, and using the
relations \eqref{irene1}, \eqref{irene2}, \eqref{irene3} found in the
proof of Lemma~\ref{lemma-tecnico_2}, for $c(a_x+a_y)>1$ we have:
\[
\begin{aligned}
&|v^{(0)}_{n,k}-v^{(0)}_{n,k-1}|\ =\ n|p_n(x)p_n(y)|O(|\Delta G_{1,k}|)\ =
\ n|p_n(x)p_n(y)|O(|\Delta \mathcal{S}_{1,k}|)\ =\
O\Big(n r_k^2\frac{|p_{n}(x)||p_{n}(y)|}{|p_{k}(x)||p_{k}(y)|}\Big);
\\
&|v^{(1)}_{n,k}-v^{(1)}_{n,k-1}|\ =
\ n|p_n(x)p_n(y)|O(|\ln(n)\Delta G_{1,k}-\Delta G_{2,k}|) \\
& \quad = n|p_n(x)p_n(y)|O(|\ln(n)\Delta \mathcal{S}_{1,k}-\Delta \mathcal{S}_{2,k}|
+|\Delta \mathcal{S}_{1,k}|)\ =
O\Big(nr_k^{2}\left(\ln\left(\frac{n}{k}\right)+1\right)
\frac{|p_{n}(x)||p_{n}(y)|}{|p_{k}(x)||p_{k}(y)|}
\Big);
\\
&|v^{(2)}_{n,k}-v^{(2)}_{n,k-1}|\ =\
n|p_n(x)p_n(y)|O(|\ln^2(n)\Delta G_{1,k}-2\ln(n)\Delta G_{2,k}+\Delta G_{3,k}|)\
\\
&\quad =n|p_n(x)p_n(y)|
O\left(
|\ln^2(n)\Delta \mathcal{S}_{1,k}-2\ln(n)\Delta\mathcal{S}_{2,k}
+\Delta \mathcal{S}_{3,k}|
+|\ln(n)\Delta\mathcal{S}_{1,k}-\Delta\mathcal{S}_{2,k}|
\right)\\
&\quad = O\Big(
nr_k^2
\Big(\ln^2\Big(\frac{n}{k}\Big)+\ln\Big(\frac{n}{k}\Big)
\Big) \frac{|p_{n}(x)||p_{n}(y)|}{|p_{k}(x)||p_{k}(y)|}
\Big),
\end{aligned}
\]
Moreover, setting $v_{n,k}:=v^{(0)}_{n,k}/\ln(n)$ for any $m_0-1\leq
k\leq n-1$, in the case $c(a_x+a_y)=1$ we have:
 $|v_{n,k}-v_{n,k-1}|=O\left(r_k^2 k/\ln(n)\right)$ when $b_x+b_y\neq 0$
since Lemma \ref{lemma-tecnico_1} and
$$|v^{(0)}_{n,k}-v^{(0)}_{n,k-1}|\ =\ n|p_n(x)p_n(y)|O(|\Delta G_{1,k}|)\ =\
n|p_n(x)p_n(y)|O(|\Delta \mathcal{S}_{1,k}|)\ =\
O\Big(nr_k^{2}\frac{|p_{n}(x)||p_{n}(y)|}{|p_{k}(x)||p_{k}(y)|}
\Big);
$$
while $|v_{n,k}-v_{n,k-1}|=O\left(r_k^{2}/\ln(n)\right)$ when $b_x+b_y=0$ since
 Lemma \ref{lemma-tecnico_1} and
$$|v^{(0)}_{n,k}-v^{(0)}_{n,k-1}|\ =\ n|p_n(x)p_n(y)|O(|\Delta G_{1,k}|)\ =\
n|p_n(x)p_n(y)|O(k^{-1}|\Delta \mathcal{S}_{1,k}|)\ =\
O\Big(r_k^{2}\frac{n|p_{n}(x)||p_{n}(y)|}{k|p_{k}(x)||p_{k}(y)|}\Big).
$$
\end{rem}

\subsection{Technical computations for the proofs of Theorem~\ref{thm:asymptotics_theta_hat_1} and
Theorem~\ref{thm:asymptotics_theta_1_star}}

In this subsection we collect some technical computations necessary for the proofs of
Theorem~\ref{thm:asymptotics_theta_hat_1} and Theorem~\ref{thm:asymptotics_theta_1_star}.
Therefore, the notation and the assumptions used here are the same as those used in these theorems.\\

The first technical result is the following:

\begin{lem}\label{lem:terms_in_A_k_n}
Let the matrix $A_{k+1,n}$ be defined as in~\eqref{def:C_k_n_A_k_n}
for $m_0-1\leq k\leq n-1$.  Then, we have that
\begin{equation*}\begin{aligned}
&[A^{11}_{k+1,n}]_{jj}\ =\ F_{k+1,n}(\alpha_j),\\
&[A^{33}_{k+1,n}]_{jj}\ =\ a^{22}_{k+1,n}\ =\ F_{k+1,n}(c^{-1}),\\
&[A^{31}_{k+1,n}]_{jj}\ =\
\begin{cases}
\left(\frac{1-\alpha_j}{c\alpha_j-1}\right)
(F_{k+1,n}(c^{-1})-F_{k+1,n}(\alpha_j)),\ &\hbox{for } c\alpha_j\neq 1,\\
(1-c^{-1})F_{k+1,n}(c^{-1})\ln\left(\frac{n}{k}\right)+O(n^{-1}),\
&\hbox{for } c\alpha_j=1.
\end{cases}
\end{aligned}\end{equation*}
\end{lem}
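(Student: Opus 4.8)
The plan is to reduce the computation of $A_{k+1,n}$ to scalar recursions by exploiting the block structure of $S_Q$. Ordering the $2N-1$ coordinates as (first block of size $N-1$, middle scalar, last block of size $N-1$), the matrix $S_Q$ in~\eqref{def:S_Q_S_R} is block lower triangular with the middle coordinate completely decoupled, and all of its blocks ($I-D$, the scalar $c^{-1}$, $c^{-1}I$ and $-c^{-1}D$) are diagonal; the same is then true of each factor $I-r_mS_Q$, whose $(3,1)$-block in coordinate $j$ equals $r_mc^{-1}\lambda_j$. Since matrices of this particular shape are closed under multiplication, $A_{k+1,n}=\prod_{m=k+1}^n[I-r_mS_Q]$ has precisely the block pattern displayed in~\eqref{def:C_k_n_A_k_n}, with $A^{11}_{k+1,n}$, $A^{33}_{k+1,n}$, $A^{31}_{k+1,n}$ all diagonal $(N-1)\times(N-1)$ matrices and $a^{22}_{k+1,n}$ a scalar. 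Reading off the $j$-th diagonal entry of the $(1,1)$-block of $I-r_mS_Q$, namely $1-r_m(1-\lambda_j)=1-r_m\alpha_j$, and of the middle and $(3,3)$-blocks, namely $1-r_mc^{-1}$, one gets at once
\[
[A^{11}_{k+1,n}]_{jj}=\prod_{m=k+1}^n(1-r_m\alpha_j)=F_{k+1,n}(\alpha_j),
\qquad
a^{22}_{k+1,n}=[A^{33}_{k+1,n}]_{jj}=\prod_{m=k+1}^n(1-r_mc^{-1})=F_{k+1,n}(c^{-1}).
\]

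For the off-diagonal block I would use the one-step recursion $A_{k+1,n}=A_{k+2,n}\,[I-r_{k+1}S_Q]$ (with $A_{n+1,n}=I$), which in the $j$-th coordinate reads
\[
[A^{31}_{k+1,n}]_{jj}=(1-r_{k+1}\alpha_j)\,[A^{31}_{k+2,n}]_{jj}
+r_{k+1}c^{-1}\lambda_j\,F_{k+2,n}(c^{-1}),
\]
and, iterated down to $k=n$, yields the closed sum
\[
[A^{31}_{k+1,n}]_{jj}
=c^{-1}\lambda_j\sum_{m=k+1}^n F_{k+1,m-1}(\alpha_j)\,r_m\,F_{m+1,n}(c^{-1})
=c^{-1}\lambda_j\,\frac{p_n(c^{-1})}{p_k(\alpha_j)}\sum_{m=k+1}^n \frac{r_m\,p_{m-1}(\alpha_j)}{p_m(c^{-1})}.
\]
When $c\alpha_j\neq1$ (so $\alpha_j\neq c^{-1}$) the inner sum telescopes: from $p_m(\cdot)=p_{m-1}(\cdot)(1-r_m\,\cdot)$ one checks the identity
\[
\frac{p_m(\alpha_j)}{p_m(c^{-1})}-\frac{p_{m-1}(\alpha_j)}{p_{m-1}(c^{-1})}
=r_m(c^{-1}-\alpha_j)\,\frac{p_{m-1}(\alpha_j)}{p_m(c^{-1})},
\]
whence $\sum_{m=k+1}^n r_m p_{m-1}(\alpha_j)/p_m(c^{-1})=(c^{-1}-\alpha_j)^{-1}\bigl(p_n(\alpha_j)/p_n(c^{-1})-p_k(\alpha_j)/p_k(c^{-1})\bigr)$. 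Substituting back, the two resulting fractions simplify to $F_{k+1,n}(\alpha_j)$ and $F_{k+1,n}(c^{-1})$, and the prefactor $c^{-1}\lambda_j/(c^{-1}-\alpha_j)$ equals $-(1-\alpha_j)/(c\alpha_j-1)$; this gives exactly $[A^{31}_{k+1,n}]_{jj}=\bigl(\tfrac{1-\alpha_j}{c\alpha_j-1}\bigr)\bigl(F_{k+1,n}(c^{-1})-F_{k+1,n}(\alpha_j)\bigr)$.

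The delicate case, and what I expect to be the main obstacle, is $c\alpha_j=1$, where the telescoping factor $c^{-1}-\alpha_j$ vanishes. There $p_{m-1}(\alpha_j)/p_m(c^{-1})=(1-r_mc^{-1})^{-1}$, so the sum collapses to $[A^{31}_{k+1,n}]_{jj}=c^{-1}\lambda_j\,F_{k+1,n}(c^{-1})\sum_{m=k+1}^n r_m/(1-r_mc^{-1})$. Using $nr_n=c+O(n^{-1})$ from~\eqref{ass:condition_r_n_1} one writes $r_m=c/m+O(m^{-2})$ and $(1-r_mc^{-1})^{-1}=1+O(m^{-1})$, hence $r_m/(1-r_mc^{-1})=c/m+O(m^{-2})$, and then~\eqref{relazione-nota}--\eqref{eulero_mascheroni} give $\sum_{m=k+1}^n r_m/(1-r_mc^{-1})=c\,\ln(n/k)+O(k^{-1})$. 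Since $c^{-1}\lambda_j=c^{-1}(1-c^{-1})$ and, by Lemma~\ref{lemma-tecnico_1}, $|F_{k+1,n}(c^{-1})|=|p_n(c^{-1})|\,|p_k(c^{-1})|^{-1}=O(n^{-1})\,O(k)=O(k/n)$, the error $O(k^{-1})$ is multiplied by $O(k/n)$ and absorbed into $O(n^{-1})$, leaving $[A^{31}_{k+1,n}]_{jj}=(1-c^{-1})F_{k+1,n}(c^{-1})\ln(n/k)+O(n^{-1})$. The real care needed is to check that all implied constants in these estimates are uniform in $k\in\{m_0,\dots,n-1\}$, so that the remainder is genuinely $O(n^{-1})$; everything else reduces to exact algebraic identities.
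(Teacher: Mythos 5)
Your proof is correct and follows essentially the same route as the paper's: read off the diagonal block structure of $\prod_m [I-r_mS_Q]$, telescope a ratio of the $p_m$'s when $c\alpha_j\neq 1$, and in the case $c\alpha_j=1$ estimate the harmonic-type sum via $nr_n=c+O(n^{-1})$ and absorb the $O(k^{-1})$ error using $|F_{k+1,n}(c^{-1})|=O(k/n)$. The paper telescopes the reciprocal ratio $X^j_l=p_l(c^{-1})/p_l(\alpha_j)$ (and writes the product with the factors in the opposite order), but the resulting sum has the same closed form, so the two computations are interchangeable.
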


\begin{proof}
By means of~\eqref{def:S_Q_S_R} and~\eqref{def:C_k_n_A_k_n}, after
standard calculations, the elements in $A_{k+1,n}$ for $m_0-1\leq
k\leq n-1$ can be written as follows:
$[A^{11}_{k+1,n}]_{jj}=F_{k+1,n}(\alpha_j)$, $[A^{33}_{k+1,n}]_{jj}=
a^{22}_{k+1,n}= F_{k+1,n}(c^{-1})$ and
$$
[A^{31}_{k+1,n}]_{jj}\ =\
(1-\alpha_j)
\frac{p_{n}(\alpha_j)}{p_{k}(c^{-1})}
S^j_{k+1,n},
$$
where
\begin{equation*}
S^j_{k+1,n}:=\sum_{l=k+1}^n\Big(\frac{r_lc^{-1}}{1-r_lc^{-1}}\Big)X^j_{l}
\qquad\mbox{and}\qquad
X^j_{l}:=\frac{p_{l}(c^{-1})}{p_{l}(\alpha_j)}.
\end{equation*}
Setting $\Delta X^j_{l}:=(X^j_{l}-X^j_{l-1})$, notice that we have
$$
\Delta X^j_{l}
=\Big(\frac{1-r_{l}c^{-1}}{1-r_{l}\alpha_j}-1\Big)X^j_{l-1}
=(c\alpha_j-1)\Big(\frac{r_lc^{-1}}{1-r_{l}\alpha_j}\Big)X^j_{l-1}
=(c\alpha_j-1)\Big(\frac{r_lc^{-1}}{1-r_lc^{-1}}\Big)X^j_l.
$$
Hence, in the case $c\alpha_j\neq 1$, we have that
$$
(X^j_{n}-X^j_{k})=\sum_{l=k+1}^{n}\Delta X^j_{l}
=(c\alpha_j-1)S^j_{k+1,n},
$$
which implies
$$
S^j_{k+1,n}=
\frac{X^j_{n}-X^j_{k}}{c\alpha_j-1}=(c\alpha_j-1)^{-1}
\Big(\frac{p_{n}(c^{-1})}{p_{n}(\alpha_j)}-\frac{p_{k}(c^{-1})}{p_{k}(\alpha_j)}
\Big).
$$
Using the above expression of $S^j_{k+1,n}$ in the definition of
$A^{31}_{k+1,n}$, we
obtain (for $c\alpha_j\neq 1$) that
\[
[A^{31}_{k+1,n}]_{jj}=
\frac{1-\alpha_j}{c\alpha_j-1}
\frac{p_{n}(\alpha_j)}{p_{k}(c^{-1})}
\Big(
\frac{p_{n}(c^{-1})}{p_{n}(\alpha_j)}-\frac{p_{k}(c^{-1})}{p_{k}(\alpha_j)}
\Big)=
\Big(\frac{1-\alpha_j}{c\alpha_j-1}\Big)
\Big(F_{k+1,n}(c^{-1})-F_{k+1,n}(\alpha_j)\Big).
\]
When $c\alpha_j=1$, observing that $X^j_{l}=1$ for any $l\geq 1$ and
using condition~\eqref{ass:condition_r_n_1_appendix} we get
$$
S^j_{k+1,n}
=
\sum_{l=k+1}^{n}\frac{r_lc^{-1}}{1-r_lc^{-1}}
=
\sum_{l=k+1}^n \frac{1}{l-1}+\sum_{l=k+1}^nO\Big(\frac{1}{l^2}\Big)
=
\sum_{l=k}^n\frac{1}{l}-\frac{1}{n}+O\Big(\sum_{l\geq k}\frac{1}{l^2}\Big)
=
\sum_{l=k}^n \frac{1}{l}+O(k^{-1}),
$$
where, for the last equality, we have used the fact that $k<n$ and
$\sum_{l\geq k}1/l^2=O(1/k)$. Then, using \eqref{eulero_mascheroni} for $a=0$,
we have
$$\sum_{l=k}^n \frac{1}{l}=\ln\Big(\frac{n}{k}\Big)+d_n-d_k=
\ln\Big(\frac{n}{k}\Big)+O(n^{-1})-O(k^{-1})=
\ln\Big(\frac{n}{k}\Big)+O(k^{-1})
$$
(where the last passage follows again by the fact that $k<n$).
Finally, since Lemma~\ref{lemma-tecnico_1} we have
$|F_{k+1,n}(c^{-1})|=O(k/n)$, we obtain (for $c\alpha_j=1$) that
$$
[A^{31}_{k+1,n}]_{jj}
=
(1-c^{-1})\frac{p_n(c^{-1})}{p_k(c^{-1})}
\Big(\ln(n/k)+O(1/k)\Big)
=
(1-c^{-1})F_{k+1,n}(c^{-1})\ln\Big(\frac{n}{k}\Big)+O(n^{-1}).
$$
\end{proof}

\subsubsection{Computations for the almost sure limits of the elements in \eqref{eq:matrix}}\label{subsubsection_appendix_technical_computation_1}

\begin{itemize}
\item
\textit{$a.s.-\lim_{n}n\sum_{k=m_0}^{n-1} r_k^2 [A^{1}_{k+1,n}B_{k+1}A^{1}_{k+1,n}]_{h,j}$}:\\
By using the first limit in~\eqref{eq:results_limits}, we have
\begin{multline*}
n\sum_{k=m_0}^{n-1} r_k^2 [B_{k+1}]_{h,j}[A^{1}_{k+1,n}]_{h,h}[A^{1}_{k+1,n}]_{j,j} =\
n\sum_{k=m_0}^{n-1} r_k^2 [B_{k+1}]_{h,j}F_{k+1,n}(\alpha_h)F_{k+1,n}(\alpha_j)\\
\stackrel{a.s}\longrightarrow\
\frac{c^2}{c(\alpha_h+\alpha_j)-1}(\mathbf{v}_h^{\top}\mathbf{v}_j)
Z_{\infty}(1-Z_{\infty}).
\end{multline*}
\item
\textit{$a.s.-\lim_{n}n\sum_{k=m_0}^{n-1} r_k^2
  [A^{3}_{k+1,n}B_{k+1}A^{3}_{k+1,n}]_{h,j}$}:\\
First, note that when
$c\alpha_h\neq 1$ and $c\alpha_j\neq 1$, we have that
$n\sum_{k=m_0}^{n-1}
r_k^2[B_{k+1}]_{h,j}[A^{3}_{k+1,n}]_{h,h}[A^{3}_{k+1,n}]_{j,j}$ has the
same limit as
\begin{multline*}
\frac{(1-c^{-1})^2}{(c\alpha_h-1)(c\alpha_j-1)}
\,n\sum_{k=m_0}^{n-1} r_k^2[B_{k+1}]_{h,j}F^2_{k+1,n}(c^{-1})\\
\begin{aligned}
&+
\ \frac{(1-\alpha_h)(1-\alpha_j)}{(c\alpha_h-1)(c\alpha_j-1)}
\,n\sum_{k=m_0}^{n-1} r_k^2[B_{k+1}]_{h,j}F_{k+1,n}(\alpha_h)F_{k+1,n}(\alpha_j)\\
&-\frac{(1-\alpha_h)(1-c^{-1})}{(c\alpha_h-1)(c\alpha_j-1)}
\,n\sum_{k=m_0}^{n-1} r_k^2[B_{k+1}]_{h,j}F_{k+1,n}(\alpha_h)F_{k+1,n}(c^{-1})\\
&-\frac{(1-\alpha_j)(1-c^{-1})}{(c\alpha_h-1)(c\alpha_j-1)}
\,n\sum_{k=m_0}^{n-1} r_k^2[B_{k+1}]_{h,j}F_{k+1,n}(\alpha_j)F_{k+1,n}(c^{-1}).
\end{aligned}
\end{multline*}
Then, when $c\alpha_h\neq 1$ and $c\alpha_j\neq 1$, using the first
limit in~\eqref{eq:results_limits} we obtain, after some standard
calculations,
$$
n\sum_{k=m_0}^{n-1} r_k^2 [B_{k+1}]_{h,j}[A^{3}_{k+1,n}]_{h,h}[A^{3}_{k+1,n}]_{j,j}\
\stackrel{a.s}\longrightarrow\
\frac{1+(c-1)(\alpha_h^{-1}+\alpha_j^{-1})}
{c(\alpha_h+\alpha_j)-1}(\mathbf{v}_h^{\top}\mathbf{v}_j)Z_{\infty}(1-Z_{\infty}).
$$
When $c\alpha_h=c\alpha_j=1$, we have that $n\sum_{k=m_0}^{n-1}
r_k^2[B_{k+1}]_{h,j}[A^{3}_{k+1,n}]_{h,h}[A^{3}_{k+1,n}]_{j,j}$ has the
same limit as
\begin{multline*}
(1-c^{-1})^2\,n\sum_{k=m_0}^{n-1} \ln^2(n/k)r_k^2[B_{k+1}]_{h,j}F^2_{k+1,n}(c^{-1})\\
\begin{aligned}
&+2c^{-1}(1-c^{-1})\,n\sum_{k=m_0}^{n-1} \ln(n/k)r_k^2[B_{k+1}]_{h,j}F^2_{k+1,n}(c^{-1})
\\
&+ c^{-2}\,n\sum_{k=m_0}^{n-1} r_k^2[B_{k+1}]_{h,j}F^2_{k+1,n}(c^{-1}),
\end{aligned}
\end{multline*}
from which, using the three limits in~\eqref{eq:results_limits}, we
obtain
$$n\sum_{k=m_0}^{n-1} r_k^2 [B_{k+1}]_{h,j}[A^{3}_{k+1,n}]_{h,h}[A^{3}_{k+1,n}]_{j,j}\
\stackrel{a.s}\longrightarrow\
(1+2c(c-1))(\mathbf{v}_h^{\top}\mathbf{v}_j)Z_{\infty}(1-Z_{\infty}).
$$
Finally, when $c\alpha_h\neq 1$ and $c\alpha_j=1$, we have that
$n\sum_{k=m_0}^{n-1}
r_k^2[B_{k+1}]_{h,j}[A^{3}_{k+1,n}]_{h,h}[A^{3}_{k+1,n}]_{j,j}$ has the
same limit as
\begin{multline*}
\frac{(1-c^{-1})^2}{(c\alpha_h-1)}\,
n\sum_{k=m_0}^{n-1} \ln(n/k)r_k^2[B_{k+1}]_{h,j}F^2_{k+1,n}(c^{-1})\\
\begin{aligned}
&+\frac{c^{-1}(1-c^{-1})}{(c\alpha_h-1)}\,
n\sum_{k=m_0}^{n-1} r_k^2[B_{k+1}]_{h,j}F^2_{k+1,n}(c^{-1})\\
&-\frac{(1-\alpha_h)(1-c^{-1})}{(c\alpha_h-1)}\,
n\sum_{k=m_0}^{n-1} \ln(n/k)r_k^2[B_{k+1}]_{h,j}F_{k+1,n}(\alpha_h)F_{k+1,n}(c^{-1})\\
&-\frac{c^{-1}(1-\alpha_h)}{(c\alpha_h-1)}\,
n\sum_{k=m_0}^{n-1} r_k^2[B_{k+1}]_{h,j}F_{k+1,n}(\alpha_h)F_{k+1,n}(c^{-1}),
\end{aligned}
\end{multline*}
which implies, using the first two limits in~\eqref{eq:results_limits}, that
$$
n\sum_{k=m_0}^{n-1} r_k^2 [B_{k+1}]_{h,j}[A^{3}_{k+1,n}]_{h,h}[A^{3}_{k+1,n}]_{j,j}\
\stackrel{a.s}\longrightarrow\
\frac{1+(c-1)(c+\alpha_h^{-1})}
{c\alpha_h}(\mathbf{v}_h^{\top}\mathbf{v}_j)Z_{\infty}(1-Z_{\infty}).
$$
The case $c\alpha_h=1$ and $c\alpha_j\neq1$ is
analogous. Therefore, we can summarize the limits in all the above
cases with the formula:
$$
\frac{1+(c-1)(\alpha_h^{-1}+\alpha_j^{-1})}
{c(\alpha_h+\alpha_j)-1}(\mathbf{v}_h^{\top}\mathbf{v}_j)Z_{\infty}(1-Z_{\infty}).
$$
\item
\textit{$a.s.-\lim_{n}n\sum_{k=m_0}^{n-1} r_k^2 (a^{2}_{k+1,n})^2b_{k+1}$}:
\\
Using the first limit in~\eqref{eq:results_limits}, we have
$$
n\sum_{k=m_0}^{n-1} r_k^2 (a^{2}_{k+1,n})^2b_{k+1}\ =\
(c^{-1}-1)^2n\sum_{k=m_0}^{n-1} r_k^2 b_{k+1}F^2_{k+1,n}(c^{-1})\
\stackrel{a.s}\longrightarrow\ (c-1)^2\|\mathbf{v}_1\|^2Z_{\infty}(1-Z_{\infty}).
$$
\item
\textit{$a.s.-\lim_{n}n\sum_{k=m_0}^{n-1} r_k^2 [A^{1}_{k+1,n}B_{k+1}A^{3}_{k+1,n}]_{h,j}$}:
\\
First, when $c\alpha_j\neq 1$ notice that
$n\sum_{k=m_0}^{n-1} r_k^2[B_{k+1}]_{h,j}[A^{1}_{k+1,n}]_{h,h}[A^{3}_{k+1,n}]_{j,j}$
has the same limit as
\[
\frac{1-c^{-1}}{c\alpha_j-1}\!
n\!\sum_{k=m_0}^{n-1}\! r_k^2[B_{k+1}]_{h,j}F_{k+1,n}(\alpha_h)F_{k+1,n}(c^{-1})
-\frac{1-\alpha_j}{c\alpha_j-1}
n\!\sum_{k=m_0}^{n-1}\! r_k^2[B_{k+1}]_{h,j}F_{k+1,n}(\alpha_h)F_{k+1,n}(\alpha_j),
\]
and hence, after standard calculations, we obtain
$$
n\sum_{k=m_0}^{n-1} r_k^2 [B_{k+1}]_{h,j}[A^{1}_{k+1,n}]_{h,h}[A^{3}_{k+1,n}]_{j,j}\
\stackrel{a.s}\longrightarrow\
\frac{\alpha_h^{-1}(c-1)+c}
{c(\alpha_h+\alpha_j)-1}(\mathbf{v}_h^{\top}\mathbf{v}_j)Z_{\infty}(1-Z_{\infty}).
$$
When $c\alpha_j=1$, $n\sum_{k=m_0}^{n-1}
r_k^2[B_{k+1}]_{h,j}[A^{1}_{k+1,n}]_{h,h}[A^{3}_{k+1,n}]_{j,j}$ has the
same limit as
\[
(1-c^{-1})
n\sum_{k=m_0}^{n-1} \ln(n/k)r_k^2[B_{k+1}]_{h,j}F_{k+1,n}(\alpha_h)F_{k+1,n}(c^{-1}) 
+ c^{-1}n\sum_{k=m_0}^{n-1} r_k^2[B_{k+1}]_{h,j}F_{k+1,n}(\alpha_h)F_{k+1,n}(c^{-1}),
\]
and hence
$$
n\sum_{k=m_0}^{n-1} r_k^2 [B_{k+1}]_{h,j}[A^{1}_{k+1,n}]_{h,h}[A^{3}_{k+1,n}]_{j,j}\
\stackrel{a.s}\longrightarrow\
\frac{\alpha_h^{-1}(c-1)+c}{c\alpha_h}(\mathbf{v}_h^{\top}\mathbf{v}_j)
Z_{\infty}(1-Z_{\infty}).$$
Therefore we can summarize the limits of the above two cases with the formula
$$
\frac{\alpha_h^{-1}(c-1)+c}
{c(\alpha_h+\alpha_j)-1}(\mathbf{v}_h^{\top}\mathbf{v}_j)Z_{\infty}(1-Z_{\infty}).
$$
\item
\textit{$a.s.-\lim_{n}n\sum_{k=m_0}^{n-1} r_k^2
  a^{2}_{k+1,n}[\mathbf{b}_{k+1}^{\top}A^{1}_{k+1,n}]_{j}$}: \\
Notice
that
$$
n\sum_{k=m_0}^{n-1} r_k^2
[\mathbf{b}_{k+1}]_j[A^{1}_{k+1,n}]_{jj}a^{2}_{k+1,n}=
(c^{-1}-1)n\sum_{k=m_0}^{n-1} r_k^2
[\mathbf{b}_{k+1}]_jF_{k+1,n}(\alpha_j)F_{k+1,n}(c^{-1}),
$$
which implies that
$$
n\sum_{k=m_0}^{n-1} r_k^2 [\mathbf{b}_{k+1}]_j[A^{1}_{k+1,n}]_{jj}a^{2}_{k+1,n}\
\stackrel{a.s}\longrightarrow\
\frac{1-c}{\alpha_j}(\mathbf{v}_1^{\top}\mathbf{v}_j)Z_{\infty}(1-Z_{\infty}).
$$
\item
\textit{$a.s.-\lim_{n}n\sum_{k=m_0}^{n-1} r_k^2
  a^{2}_{k+1,n}[\mathbf{b}_{k+1}^{\top}A^{3}_{k+1,n}]_{j}$}:
\\
First,
when $c\alpha_j\neq 1$, notice that\\ $n\sum_{k=m_0}^{n-1}
r_k^2[\mathbf{b}_{k+1}]_j[A^{3}_{k+1,n}]_{jj}a^{2}_{k+1,n}$ has the
same limit as
\[
\begin{aligned}
&\frac{(1-c^{-1})(1-\alpha_j)}{c\alpha_j-1}
n\sum_{k=m_0}^{n-1} r_k^2[\mathbf{b}_{k+1}]_jF_{k+1,n}(\alpha_j)F_{k+1,n}(c^{-1})\\
&- \frac{(1-c^{-1})^2}{c\alpha_j-1}n
\sum_{k=m_0}^{n-1} r_k^2[\mathbf{b}_{k+1}]_j F_{k+1,n}^2(c^{-1}),
\end{aligned}
\]
which implies after some calculations
$$n\sum_{k=m_0}^{n-1} r_k^2 [\mathbf{b}_{k+1}]_j[A^{3}_{k+1,n}]_{jj}a^{2}_{k+1,n}\
\stackrel{a.s}\longrightarrow\
\frac{1-c}{\alpha_j}(\mathbf{v}_1^{\top}\mathbf{v}_j)Z_{\infty}(1-Z_{\infty}).
$$
When $c\alpha_j=1$, $n\sum_{k=m_0}^{n-1}
r_k^2[\mathbf{b}_{k+1}]_j[A^{3}_{k+1,n}]_{jj}a^{2}_{k+1,n}$ has the
same limit as
\[
-(1-c^{-1})^2n\sum_{k=m_0}^{n-1}\ln(n/k)r_k^2[\mathbf{b}_{k+1}]_jF^2_{k+1,n}(c^{-1}) 
-c^{-1}(1-c^{-1})n\sum_{k=m_0}^{n-1} r_k^2[\mathbf{b}_{k+1}]_jF^2_{k+1,n}(c^{-1}),
\]
from which we can obtain
$$
n\sum_{k=m_0}^{n-1} r_k^2[\mathbf{b}_{k+1}]_j[A^{3}_{k+1,n}]_{jj}a^{2}_{k+1,n}\
\stackrel{a.s}\longrightarrow\
c(1-c)(\mathbf{v}_1^{\top}\mathbf{v}_j)Z_{\infty}(1-Z_{\infty}).
$$
Therefore, we can summarize the limits of the above two cases with the formula
$$
\frac{1-c}{\alpha_j}(\mathbf{v}_1^{\top}\mathbf{v}_j)Z_{\infty}(1-Z_{\infty}).
$$
\end{itemize}

\subsubsection{Computations for the almost sure limits of the elements in~\eqref{eq:matrix_star}}\label{subsubsection_appendix_technical_computation_1_star}

\begin{itemize}
\item
\textit{$a.s.-\lim_{n}\frac{n}{\ln(n)}\sum_{k=m_0}^{n-1} r_k^2
  [A^{1}_{k+1,n}B_{k+1}A^{1}_{k+1,n}]_{h,j}$}: \\
By
using~\eqref{eq:results_limit_star}, we have
\begin{multline*}
\frac{n}{\ln(n)}
\sum_{k=m_0}^{n-1} r_k^2 [B_{k+1}]_{h,j}[A^{1}_{k+1,n}]_{h,h}[A^{1}_{k+1,n}]_{j,j}
=
\frac{n}{\ln(n)}
\sum_{k=m_0}^{n-1} r_k^2 [B_{k+1}]_{h,j}F_{k+1,n}(\alpha_h)F_{k+1,n}(\alpha_j)
\\
\stackrel{a.s}\longrightarrow
(\mathbf{v}_h^{\top}\mathbf{v}_j)Z_{\infty}(1-Z_{\infty})
\begin{cases}
c^2\ &\mbox{ if }b_{\alpha_h}+b_{\alpha_j}=0,\\
0\ &\mbox{ if }b_{\alpha_h}+b_{\alpha_j}\neq 0.
\end{cases}
\end{multline*}
\item
\textit{$a.s.-\lim_{n}\frac{n}{\ln(n)}\sum_{k=m_0}^{n-1} r_k^2
  [A^{3}_{k+1,n}B_{k+1}A^{3}_{k+1,n}]_{h,j}$}:\\
Since
$c(\alpha_h+\alpha_j)=1$ implies $c\alpha_h\neq 1$ and $c\alpha_j\neq
1$, we have that
$$\frac{n}{\ln(n)}\sum_{k=m_0}^{n-1} r_k^2[B_{k+1}]_{h,j}
[A^{3}_{k+1,n}]_{h,h}[A^{3}_{k+1,n}]_{j,j}$$
has the same limit as
\begin{multline*}
\frac{(1-c^{-1})^2}{(c\alpha_h-1)(c\alpha_j-1)}\,
\frac{n}{\ln(n)}\sum_{k=m_0}^{n-1} r_k^2[B_{k+1}]_{h,j}F^2_{k+1,n}(c^{-1})\\
\begin{aligned}
&+\ \frac{(1-\alpha_h)(1-\alpha_j)}{(c\alpha_h-1)(c\alpha_j-1)}\,
\frac{n}{\ln(n)}
\sum_{k=m_0}^{n-1} r_k^2[B_{k+1}]_{h,j}F_{k+1,n}(\alpha_h)F_{k+1,n}(\alpha_j)\\
&-\frac{(1-\alpha_h)(1-c^{-1})}{(c\alpha_h-1)(c\alpha_j-1)}\,
\frac{n}{\ln(n)}
\sum_{k=m_0}^{n-1} r_k^2[B_{k+1}]_{h,j}F_{k+1,n}(\alpha_h)F_{k+1,n}(c^{-1})\\
&-\frac{(1-\alpha_j)(1-c^{-1})}{(c\alpha_h-1)(c\alpha_j-1)}\,
\frac{n}{\ln(n)}
\sum_{k=m_0}^{n-1} r_k^2[B_{k+1}]_{h,j}F_{k+1,n}(\alpha_j)F_{k+1,n}(c^{-1}),
\end{aligned}
\end{multline*}
which is equal to
$$
o(1) + \left(\frac{(\alpha_h-1)(\alpha_j-1)}{c^2\alpha_h\alpha_j}\right)
\frac{n}{\ln(n)}
\sum_{k=m_0}^{n-1} r_k^2[B_{k+1}]_{h,j}F_{k+1,n}(\alpha_h)F_{k+1,n}(\alpha_j).
$$
Hence, we have that
\begin{multline*}
\frac{n}{\ln(n)}
\sum_{k=m_0}^{n-1} r_k^2 [B_{k+1}]_{h,j}[A^{3}_{k+1,n}]_{h,h}[A^{3}_{k+1,n}]_{j,j}\
\\
\stackrel{a.s}\longrightarrow
(\mathbf{v}_h^{\top}\mathbf{v}_j)Z_{\infty}(1-Z_{\infty})
\begin{cases}
\frac{(\alpha_h-1)(\alpha_j-1)}{\alpha_h\alpha_j}\
&\mbox{ if }b_{\alpha_h}+b_{\alpha_j}=0,\\
0\
&\mbox{ if }b_{\alpha_h}+b_{\alpha_j}\neq 0.
\end{cases}
\end{multline*}
\item
\textit{$a.s.-\lim_{n}\frac{n}{\ln(n)}\sum_{k=m_0}^{n-1} r_k^2b_{k+1}
  (a^{2}_{k+1,n})^2$}: \\
Since the calculations are analogous to those in
Subsection~\ref{subsubsection_appendix_technical_computation_1}, we
have $$\frac{n}{\ln(n)}\sum_{k=m_0}^{n-1} r_k^2 b_{k+1}(a^{2}_{k+1,n})^2
\stackrel{a.s}\longrightarrow 0.$$
\item
\textit{$a.s.-\lim_{n}\frac{n}{\ln(n)}\sum_{k=m_0}^{n-1} r_k^2
  [A^{1}_{k+1,n}B_{k+1}A^{3}_{k+1,n}]_{h,j}$}:\\
Since $c(\alpha_h+\alpha_j)=1$ implies $c\alpha_j\neq 1$, we have that
$$\frac{n}{\ln(n)}\sum_{k=m_0}^{n-1}
r_k^2[B_{k+1}]_{h,j}[A^{1}_{k+1,n}]_{h,h}[A^{3}_{k+1,n}]_{j,j}$$ has the
same limit as
\begin{align*}
\left(\frac{1-c^{-1}}{c\alpha_j-1}\right)&
\frac{n}{\ln(n)}
\sum_{k=m_0}^{n-1} r_k^2[B_{k+1}]_{h,j}F_{k+1,n}(\alpha_h)F_{k+1,n}(c^{-1})
\\
&\qquad\qquad- \left(\frac{1-\alpha_j}{c\alpha_j-1}\right)
\frac{n}{\ln(n)}
\sum_{k=m_0}^{n-1} r_k^2[B_{k+1}]_{h,j}F_{k+1,n}(\alpha_h)F_{k+1,n}(\alpha_j)
\\
&=o(1)- \left(\frac{1-\alpha_j}{c\alpha_j-1}\right)
\frac{n}{\ln(n)}
\sum_{k=m_0}^{n-1} r_k^2[B_{k+1}]_{h,j}F_{k+1,n}(\alpha_h)F_{k+1,n}(\alpha_j).
\end{align*}
Hence, we have
\begin{multline*}
\frac{n}{\ln(n)}
\sum_{k=m_0}^{n-1} r_k^2[B_{k+1}]_{h,j}[A^{1}_{k+1,n}]_{h,h}[A^{3}_{k+1,n}]_{j,j}\\
\stackrel{a.s}\longrightarrow
(\mathbf{v}_h^{\top}\mathbf{v}_j)
Z_{\infty}(1-Z_{\infty})
\begin{cases}
\frac{c^2(\alpha_j-1)}{c\alpha_j-1}=
\frac{c(1-\alpha_j)}{\alpha_h}\
&\mbox{ if }b_{\alpha_h}+b_{\alpha_j}=0,\\
0\
&\mbox{ if } b_{\alpha_h}+b_{\alpha_j}\neq 0.
\end{cases}
\end{multline*}
\item
\textit{$a.s.-\lim_{n}\frac{n}{\ln(n)}\sum_{k=m_0}^{n-1} r_k^2
  a^{2}_{k+1,n}[\mathbf{b}_{k+1}^{\top}A^{1}_{k+1,n}]_{j}$}:\\
Since
the calculations are analogous to those in Subsection~\ref{subsubsection_appendix_technical_computation_1}, we
have $$\frac{n}{\ln(n)}\sum_{k=m_0}^{n-1} r_k^2
[\mathbf{b}_{k+1}]_ja^{2}_{k+1,n}[A^{1}_{k+1,n}]_{jj}
\stackrel{a.s}\longrightarrow 0.$$
\item
\textit{$a.s.-\lim_{n}\frac{n}{\ln(n)}\sum_{k=m_0}^{n-1} r_k^2
  a^{2}_{k+1,n}[\mathbf{b}_{k+1}^{\top}A^{3}_{k+1,n}]_{j}$}:\\
Since
the calculations are analogous to those in Subsection~\ref{subsubsection_appendix_technical_computation_1}, we
have $$\frac{n}{\ln(n)}\sum_{k=m_0}^{n-1} r_k^2
[\mathbf{b}_{k+1}]_ja^{2}_{k+1,n}[A^{3}_{k+1,n}]_{jj}
\stackrel{a.s}\longrightarrow0.$$
\end{itemize}

\section{Stable convergence and its variants}\label{app-B}

This brief appendix contains some basic definitions and results
concerning stable convergence and its variants. For more details, we
refer the reader to \cite{crimaldi-2009, crimaldi-libro,
  cri-let-pra-2007, hall-1980} and the references therein.\\

\indent Let $(\Omega, {\mathcal A}, P)$ be a probability space, and let
$S$ be a Polish space, endowed with its Borel $\sigma$-field. A {\em
  kernel} on $S$, or a random probability measure on $S$, is a
collection $K=\{K(\omega):\, \omega\in\Omega\}$ of probability
measures on the Borel $\sigma$-field of $S$ such that, for each
bounded Borel real function $f$ on $S$, the map
$$
\omega\mapsto
K\!f(\omega)=\int f (x)\, K(\omega)(dx)
$$
is $\mathcal A$-measurable. Given a sub-$\sigma$-field $\mathcal H$ of
$\mathcal A$, a kernel $K$ is said $\mathcal H$-measurable if all the
above random variables $K\!f$ are $\mathcal H$-measurable.\\

\indent On $(\Omega, {\mathcal A},P)$, let $(Y_n)_n$ be a sequence of
$S$-valued random variables, let $\mathcal H$ be a sub-$\sigma$-field
of $\mathcal A$, and let $K$ be a $\mathcal H$-measurable kernel on
$S$. Then we say that $Y_n$ converges {\em $\mathcal H$-stably} to
$K$, and we write $Y_n\longrightarrow K$ ${\mathcal H}$-stably, if
$$
P(Y_n \in \cdot \,|\, H)\stackrel{weakly}\longrightarrow
E\left[K(\cdot)\,|\, H \right]
\qquad\hbox{for all } H\in{\mathcal H}\; \hbox{with } P(H) > 0,
$$where $K(\cdot)$ denotes the random variable
  defined, for each Borel set $B$ of $S$, as $\omega\mapsto
  K\!I_B(\omega)=K(\omega)(B)$.  In the case when ${\mathcal
  H}={\mathcal A}$, we simply say that $Y_n$ converges {\em stably} to
$K$ and we write $Y_n\longrightarrow K$ stably. Clearly, if
$Y_n\longrightarrow K$ ${\mathcal H}$-stably, then $Y_n$ converges in
distribution to the probability distribution $E[K(\cdot)]$. Moreover,
the $\mathcal H$-stable convergence of $Y_n$ to $K$ can be stated in
terms of the following convergence of conditional expectations:
\begin{equation}\label{def-stable}
E[f(Y_n)\,|\, {\mathcal H}]\stackrel{\sigma(L^1,\, L^{\infty})}\longrightarrow
K\!f
\end{equation}
for each bounded continuous real function $f$ on $S$. \\

\indent In \cite{cri-let-pra-2007} the notion of $\mathcal H$-stable
convergence is firstly generalized in a natural way replacing in
(\ref{def-stable}) the single sub-$\sigma$-field $\mathcal H$ by a
collection ${\mathcal G}=({\mathcal G}_n)_n$ (called conditioning
system) of sub-$\sigma$-fields of $\mathcal A$ and then it is
strengthened by substituting the convergence in
$\sigma(L^1,L^{\infty})$ by the one in probability (i.e. in $L^1$,
since $f$ is bounded). Hence, according to \cite{cri-let-pra-2007}, we
say that $Y_n$ converges to $K$ {\em stably in the strong sense}, with
respect to ${\mathcal G}=({\mathcal G}_n)_n$, if
\begin{equation}\label{def-stable-strong}
E\left[f(Y_n)\,|\,{\mathcal G}_n\right]\stackrel{P}\longrightarrow K\!f
\end{equation}
for each bounded continuous real function $f$ on $S$.\\

\indent Finally, a strengthening of the stable convergence in the
strong sense can be naturally obtained if in (\ref{def-stable-strong})
we replace the convergence in probability by the almost sure
convergence: given a conditioning system ${\mathcal G}=({\mathcal
  G}_n)_n$, we say that $Y_n$ converges to $K$ in the sense of the
{\em almost sure conditional convergence}, with respect to ${\mathcal
  G}$, if
\begin{equation*}
E\left[f(Y_n)\,|\,{\mathcal G}_n\right]\stackrel{a.s.}\longrightarrow K\!f
\end{equation*}
for each bounded continuous real function $f$ on
  $S$. The almost sure conditional convergence has been introduced in
  \cite{crimaldi-2009} and, subsequently, employed by others in the
  urn model literature (e.g. \cite{aletti-2009, z}).  \\

We now conclude this section recalling two convergence results that we
need in our proofs. \\

From \cite[Proposition~3.1]{cri-pra}, we can get the following
result.

\begin{theo}\label{thm:triangular}
Let $({\mathbf T}_{n,k})_{n\geq 1, 1\leq k\leq k_n}$ be a triangular
array of $d$-dimensional real random vectors, such that, for each
fixed $n$, the finite sequence $({\mathbf T}_{n,k})_{1\leq k\leq k_n}$
is a martingale difference array with respect to a given filtration
$({\mathcal G}_{n,k})_{k\geq 0}$. Moreover, let $(t_n)_n$ be a
sequence of real numbers and assume that the following
conditions hold:
\begin{itemize}
\item[(c1)] ${\mathcal G}_{n,k}{\underline{\subset}} {\mathcal G}_{n+1,
  k}$ for each $n$ and $1\leq k\leq k_n$;
\item[(c2)] $\sum_{k=1}^{k_n} (t_n{\mathbf
  T}_{n,k})(t_n{\mathbf T}_{n,k})^{\top}=t_n^2\sum_{k=1}^{k_n} {\mathbf
  T}_{n,k}{\mathbf T}_{n,k}^{\top} \stackrel{P}\longrightarrow \Sigma$,
  where $\Sigma$ is a random positive semi\-defi\-ni\-te matrix;
\item[(c3)] $\sup_{1\leq k\leq k_n} |t_n{\mathbf T}_{n,k}|
\stackrel{L^1}\longrightarrow 0$.
\end{itemize}
Then $t_n\sum_{k=1}^{k_n}{\mathbf T}_{n,k}$ converges stably to the
Gaussian kernel ${\mathcal N}(\mathbf{0}, \Sigma)$.
\end{theo}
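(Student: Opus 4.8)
The plan is to obtain Theorem~\ref{thm:triangular} as a direct specialization of the stable martingale central limit theorem of \cite[Proposition~3.1]{cri-pra}, applied to the rescaled array. First I would set $\mathbf{T}'_{n,k}:=t_n\mathbf{T}_{n,k}$ and observe that, since each $t_n$ is a (deterministic) constant, $(\mathbf{T}'_{n,k})_{1\le k\le k_n}$ is again, for every fixed $n$, a martingale difference array with respect to $(\mathcal{G}_{n,k})_{k\ge 0}$, because $\mathbf{T}'_{n,k}$ is $\mathcal{G}_{n,k}$-measurable and $E[\mathbf{T}'_{n,k}\mid\mathcal{G}_{n,k-1}]=t_nE[\mathbf{T}_{n,k}\mid\mathcal{G}_{n,k-1}]=\mathbf{0}$. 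Hence it is enough to check that $(\mathbf{T}'_{n,k})$ satisfies the hypotheses of the quoted proposition, and to read off its conclusion, which then says exactly that $\sum_{k=1}^{k_n}\mathbf{T}'_{n,k}=t_n\sum_{k=1}^{k_n}\mathbf{T}_{n,k}$ converges stably to the Gaussian kernel $\mathcal{N}(\mathbf{0},\Sigma)$.

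Next I would translate the three assumptions. The nesting hypothesis required by \cite[Proposition~3.1]{cri-pra} is precisely~(c1). The condition on the observed quadratic variation is $\sum_{k}\mathbf{T}'_{n,k}(\mathbf{T}'_{n,k})^{\top}=t_n^{2}\sum_k\mathbf{T}_{n,k}\mathbf{T}_{n,k}^{\top}\stackrel{P}{\longrightarrow}\Sigma$, which is exactly~(c2); taking traces and using that each summand is positive semidefinite, this also yields $\sum_k\|\mathbf{T}'_{n,k}\|^{2}\stackrel{P}{\longrightarrow}\operatorname{tr}(\Sigma)$, so the total quadratic variation is bounded in probability. Finally, the asymptotic-negligibility (Lindeberg-type) condition follows from~(c3): since $\sup_{1\le k\le k_n}\|\mathbf{T}'_{n,k}\|\stackrel{L^{1}}{\longrightarrow}0$ (equivalently in the norm $|\cdot|$, the two being comparable in the fixed dimension $d$), a fortiori $\sup_{k}\|\mathbf{T}'_{n,k}\|\stackrel{P}{\longrightarrow}0$, and for every $\varepsilon>0$ one has the pointwise bound
\[
\sum_{k=1}^{k_n}\|\mathbf{T}'_{n,k}\|^{2}\ind_{\{\|\mathbf{T}'_{n,k}\|>\varepsilon\}}\ \le\ \ind_{\{\sup_{k}\|\mathbf{T}'_{n,k}\|>\varepsilon\}}\ \sum_{k=1}^{k_n}\|\mathbf{T}'_{n,k}\|^{2}\ \stackrel{P}{\longrightarrow}\ 0,
\]
because the first factor tends to $0$ in probability while the second is bounded in probability. (If the negligibility in \cite[Proposition~3.1]{cri-pra} is demanded in $L^{1}$, the same bound combined with the uniform integrability of $\sum_k\|\mathbf{T}'_{n,k}\|^{2}$, or simply the $L^{1}$ form already present in~(c3), gives it.) With all hypotheses in place, the quoted proposition delivers the asserted stable convergence.

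The only genuinely delicate point is bookkeeping about which version of the variance condition \cite[Proposition~3.1]{cri-pra} is phrased with: if it uses the \emph{predictable} quadratic variation $\sum_k E[\mathbf{T}'_{n,k}(\mathbf{T}'_{n,k})^{\top}\mid\mathcal{G}_{n,k-1}]$ rather than the observed one of~(c2), one must first show the two have the same in-probability limit, i.e.
\[
\sum_{k=1}^{k_n}\Bigl(\mathbf{T}'_{n,k}(\mathbf{T}'_{n,k})^{\top}-E\bigl[\mathbf{T}'_{n,k}(\mathbf{T}'_{n,k})^{\top}\mid\mathcal{G}_{n,k-1}\bigr]\Bigr)\ \stackrel{P}{\longrightarrow}\ \mathbf{0}.
\]
This is the classical fact that, for a martingale difference array whose total quadratic variation is bounded in probability and whose maximal summand is asymptotically negligible, the observed and predictable quadratic variations are asymptotically equivalent; it is proved entrywise by a truncation/stopping-time argument that bounds the $L^{2}$ norm of the stopped difference by $\bigl(\sup_k\|\mathbf{T}'_{n,k}\|^{2}\bigr)$ times the (bounded in probability) total quadratic variation (see e.g.\ \cite{hall-1980}). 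I expect this observed-versus-predictable reduction, rather than the verification of (c1)--(c3), to be the main obstacle; but it is standard, and in any event the statement of \cite[Proposition~3.1]{cri-pra} is tailored so that (c2) can be used directly. $\qed$
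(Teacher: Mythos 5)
Your proposal is correct and follows essentially the same route as the paper: the paper gives no argument beyond deducing the statement ``from \cite[Proposition~3.1]{cri-pra}'', and your reduction --- absorbing the deterministic scaling $t_n$ into the array and matching (c1)--(c3) with the hypotheses of that proposition --- is exactly that deduction, with your remarks on the Lindeberg-type negligibility and on observed versus predictable quadratic variation being harmless extra bookkeeping.
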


The following result combines together stable convergence and 
stable convergence in the strong sense.

\begin{theo}\cite[Lemma 1]{ber-cri-pra-rig}\label{blocco}
Suppose that $C_n$ and $D_n$ are $S$-valued random variables, that $M$
and $N$ are kernels on $S$, and that ${\mathcal G}=({\mathcal G}_n)_n$
is a filtration satisfying for all $n$
$$
\sigma(C_n)\underline\subset{\mathcal G}_n\quad\hbox{and }\quad
\sigma(D_n)\underline\subset
\sigma\left({\textstyle\bigcup_n}{\mathcal G}_n\right)
$$

\noindent If $C_n$ stably converges to $M$ and $D_n$ converges to $N$
stably in the strong sense, with respect to $\mathcal G$, then
$$
(C_n, D_n)\longrightarrow M \otimes N \qquad\hbox{stably}.
$$
(Here, $M\otimes N$ is the kernel on $S\times S$ such that $(M
\otimes N )(\omega) = M(\omega) \otimes N(\omega)$ for all $\omega$.)
\end{theo}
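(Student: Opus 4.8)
The plan is to reduce the statement to its definition in terms of bounded continuous test functions, and then to split the relevant functional into a \emph{predictable} part, controlled by the stable convergence of $C_n$, and a \emph{centered} part, which is annihilated by a conditioning argument combined with martingale convergence. Recall first that $(C_n,D_n)\to M\otimes N$ stably is equivalent to $E[\xi\,h(C_n,D_n)]\to E[\xi\,(M\otimes N)h]$ for every $\xi\in L^{\infty}(\mathcal A)$ and every $h\in C_b(S\times S)$ (this is the $\sigma(L^1,L^{\infty})$ formulation of the definition with $\mathcal H=\mathcal A$, since $E[h(C_n,D_n)\mid\mathcal A]=h(C_n,D_n)$). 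Moreover, since each of $(C_n)_n$ and $(D_n)_n$ converges in distribution and hence is uniformly tight (Prokhorov, $S$ being Polish), by a standard approximation argument it is enough to check the convergence for product functions $h=f\otimes g$ with $f,g\in C_b(S)$, because the linear span of such products is convergence--determining on $S\times S$ and tightness upgrades pointwise convergence of the relevant conditional laws to weak convergence. Thus the goal becomes
\[
E\big[\,\xi\, f(C_n)\,g(D_n)\,\big]\ \longrightarrow\ E\big[\,\xi\,(Mf)(Ng)\,\big],
\qquad \xi\in L^{\infty}(\mathcal A),\ f,g\in C_b(S).
\]

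Next I would write $g(D_n)=R_n+E[g(D_n)\mid\mathcal G_n]$ with $R_n:=g(D_n)-E[g(D_n)\mid\mathcal G_n]$, so that $E[R_n\mid\mathcal G_n]=0$, and treat the two resulting terms separately. In $E[\xi f(C_n)\,E[g(D_n)\mid\mathcal G_n]]$ I would replace $E[g(D_n)\mid\mathcal G_n]$ by $Ng$: the error is at most $\|\xi\|_{\infty}\|f\|_{\infty}\,E\big[\,|E[g(D_n)\mid\mathcal G_n]-Ng|\,\big]$, which tends to $0$ because $D_n\to N$ stably in the strong sense with respect to $\mathcal G$ and $g$ is bounded (convergence in probability of a uniformly bounded sequence is convergence in $L^1$). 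Since $\xi\,Ng\in L^{\infty}(\mathcal A)$, the stable convergence $C_n\to M$, applied with $\xi Ng$ as test random variable, then gives $E[(\xi Ng)\,f(C_n)]\to E[(\xi Ng)\,Mf]=E[\xi\,(Mf)(Ng)]$. Hence this first term already produces the desired limit.

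The heart of the proof is to show that the centered term $E[\xi f(C_n)R_n]$ tends to $0$, and here both filtration hypotheses enter. Because $\sigma(C_n)\subseteq\mathcal G_{\infty}:=\sigma\big(\bigcup_k\mathcal G_k\big)$ and $\sigma(D_n)\subseteq\mathcal G_{\infty}$, the random variable $f(C_n)R_n$ is $\mathcal G_{\infty}$--measurable, so $\xi$ may be replaced by $\eta:=E[\xi\mid\mathcal G_{\infty}]$ without changing the expectation. Then, using $\sigma(C_n)\subseteq\mathcal G_n$ together with $E[R_n\mid\mathcal G_n]=0$, conditioning on $\mathcal G_n$ gives
\[
E[\eta f(C_n)R_n]=E\big[\,f(C_n)\,E\big[(\eta-E[\eta\mid\mathcal G_n])\,R_n\mid\mathcal G_n\big]\,\big],
\]
whence $|E[\xi f(C_n)R_n]|=|E[\eta f(C_n)R_n]|\le 2\|f\|_{\infty}\|g\|_{\infty}\,E\big[\,|\eta-E[\eta\mid\mathcal G_n]|\,\big]$. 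Since $\eta\in L^{1}(\mathcal G_{\infty})$ and $(\mathcal G_n)_n$ is an increasing filtration generating $\mathcal G_{\infty}$, L\'evy's upward martingale convergence theorem yields $E[\eta\mid\mathcal G_n]\to\eta$ in $L^1$, so this bound vanishes. Adding the contributions of the two terms gives the claimed stable convergence.

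I expect this centered term to be the only genuine obstacle. The point one must not miss is that $C_n$ and $D_n$ are measurable with respect to the limiting $\sigma$--field $\mathcal G_{\infty}$ of the conditioning filtration, which is what legitimizes passing from an arbitrary $\xi\in L^{\infty}(\mathcal A)$ to its $\mathcal G_{\infty}$--projection $\eta$; only after that reduction do the orthogonality $E[R_n\mid\mathcal G_n]=0$ and the $L^1$ convergence $E[\eta\mid\mathcal G_n]\to\eta$ combine to close the estimate. By contrast, the predictable term is a routine chaining of the strong stable convergence of $D_n$ with the ordinary stable convergence of $C_n$, and the initial reduction to product test functions is standard soft analysis.
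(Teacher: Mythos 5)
Your proof is correct. Note that the paper itself does not prove Theorem~\ref{blocco}: it is imported verbatim from Lemma~1 of \cite{ber-cri-pra-rig}, so there is no internal proof to compare against; your argument --- reducing to product test functions, splitting $g(D_n)=E[g(D_n)\mid\mathcal{G}_n]+R_n$, handling the first term by the strong-sense convergence of $D_n$ followed by the stable convergence of $C_n$ tested against $\xi\,Ng$, and killing the centered term by replacing $\xi$ with $\eta=E[\xi\mid\sigma(\bigcup_n\mathcal{G}_n)]$ and invoking L\'evy's upward martingale convergence --- is essentially the standard argument behind that lemma, and it uses both measurability hypotheses exactly where they are needed. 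The only cosmetic point is that in the Prokhorov/tightness reduction to test functions $f\otimes g$ one should work with $\xi\ge 0$ (e.g.\ indicators $\mathbf{1}_H$, which suffice by the definition of stable convergence) so that the $\xi$-biased laws are probability measures; this is immediate by linearity and does not affect the rest of the proof.
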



\end{document}